\documentclass[12pt]{article}

\usepackage{amsmath, amssymb, amsthm, physics, url, mathrsfs, cite, xcolor, mathtools, dsfont, enumitem, tikz, adjustbox}

\mathtoolsset{showonlyrefs} 

\usetikzlibrary{fit, intersections, calc, angles, scopes}
\tikzset{every picture/.style={line width=0.75pt}} 

\definecolor{darkgreen}{RGB}{0, 90, 0}
\definecolor{darkred}{RGB}{234, 5, 35}
\definecolor{navyblue}{RGB}{0, 0, 128}

\usepackage[colorlinks, citecolor=darkgreen, linkcolor=darkred, urlcolor=navyblue, bookmarks=false,
	pdfauthor={Aobo Chen},
	pdftitle={Boundary Harnack principle on uniform domains},
	pdfkeywords={boundary Harnack principle, elliptic Harnack inequality, uniform domain, Dirichlet form}
	]{hyperref}

\setlength{\topmargin}{-1.5cm}
\setlength{\headheight}{1.5cm}
\setlength{\headsep}{0.3cm}
\setlength{\textheight}{22.5cm}
\setlength{\oddsidemargin}{0.5cm}
\setlength{\evensidemargin}{0.5cm}
\setlength{\textwidth}{16.0cm}

\linespread{1.13}

\tolerance 500
\parskip 0.1cm

\usepackage[margin=1 in]{geometry}

\numberwithin{equation}{section}
\numberwithin{figure}{section}

\newtheorem{theorem}{Theorem}[section]
\newtheorem{lemma}[theorem]{Lemma}
\newtheorem{proposition}[theorem]{Proposition}

\theoremstyle{definition}
\newtheorem{definition}[theorem]{Definition}
\newtheorem{remark}[theorem]{Remark}
\newtheorem{notation}[theorem]{Notation}

\newcommand{\supp}{\mathrm{supp}}
\newcommand{\diam}{\mathrm{diam}}
\newcommand{\dist}{\mathrm{dist}}
\DeclareMathOperator*{\esssup}{\,ess\, sup}
\DeclareMathOperator*{\essinf}{\,ess\, inf}
\newcommand{\EHI}{\hyperlink{EHI}{\mathrm{EHI}}}
\newcommand*{\dif}{\mathop{}\!\mathrm{d}}
\newcommand{\one}{\mathds{1}}
\newcommand{\Capacity}{\operatorname{Cap}}

\newcounter{cnstcnt}
\newcommand{\newconstant}{%
\refstepcounter{cnstcnt}%
\ensuremath{C_{\thecnstcnt}}}
\newcommand{\oldconstant}[1]{\ensuremath{C_{\ref*{#1}}}}

\font\titlefont=cmbx12 scaled 1400
\title{\titlefont Boundary Harnack principle on uniform domains}

\author{Aobo Chen\thanks{Research partially supported by China Scholarship Council.}}

\date{\today}

\begin{document}

\maketitle

\vspace{-0.7cm}

\begin{abstract}
We present a proof of scale-invariant boundary Harnack principle for uniform domains when the underlying space satisfies a scale-invariant elliptic Harnack inequality. Our approach does not assume the underlying space to be geodesic. Additionally, the existence of Green functions is also not assumed beforehand and is ensured by a recent result from M. T. Barlow, Z.-Q. Chen and M. Murugan.

	\vskip0.2cm
\noindent {\it Keywords:} boundary Harnack principle, elliptic Harnack inequality, uniform domain, Dirichlet form
	\vskip0.2cm
\noindent {\it 2020 Mathematics Subject Classification:} Primary 31B25; secondary 31B05

\end{abstract}

\section{Introduction}\label{s-intro}

The \emph{boundary Harnack principle} (BHP) is a property of a domain that controls the ratio of two positive harmonic functions near some part of the domain where both functions vanish. It is an important tool to help understand the boundary behaviour of harmonic functions.

The BHP for Lipschitz domains was first obtained independently by Ancona \cite{Anc78}, Dahlberg \cite{Dah77} and Wu \cite{Wu78}, and has been extended in several ways. For example, Aikawa \cite{Aik01} gave a BHP for length uniform domains in Euclidean space with standard Laplacian; Gyrya and Saloff-Coste \cite{GSC11} proved a BHP for inner uniform domains in a measure metric length space with a Dirichlet form that satisfies the standard parabolic Harnack inequality with walk dimension $2$. These results are further extended by Lierl \cite{Lie15} to inner uniform domains on length space with a Dirichlet form that satisfies parabolic Harnack inequality with anomalous space-time scaling, which is the case for diffusions on some fractals. Another recent progress of BHP is \cite{BM19}, where the authors there show that BHP for inner uniform domains is implied purely by elliptic information, namely the elliptic Harnack inequality and existence of Green function.

One limit in the definitions of being \emph{length uniform domain} and \emph{inner uniform domain} arises from their requirement for the existence of a non-trivial rectifiable curve, which is not preserved under some quasi-symmetric changes of metric such as snowflake transform. However, the property of being a \emph{uniform domain} that we considered in this work (see Definition \ref{d-ud}) is preserved under quasi-symmetric transforms of metric. Since quasi-symmetric transformation of metrics recently plays a significant role in helping us understand heat kernel and Harnack inequalities \cite{Kig12, BCM22, KM23b}, we adapt the notion of uniform domain and expect BHP would also hold under this weaker definition.

The main result of this paper is as follows. See Section \ref{s-mmd-g} and \ref{s-ud} for detailed definitions and notations.

\begin{theorem}[BHP]\label{t-bhp}
    Let $(\mathcal{X},d)$ be a complete metric doubling space, and let $m$ be a Radon measure on $\mathcal{X}$ with full support. Let $(\mathcal{E},\mathcal{F})$ be a strongly local symmetric regular Dirichlet form on $L^{2}(\mathcal{X},m)$. Suppose that $(\mathcal{X},d,m,\mathcal{E},\mathcal{F})$ satisfies the elliptic Harnack inequality. Let $U\subsetneq\mathcal{X}$ be an $A$-uniform domain. Then there exist $A_{0}, C,C_{0}\in(1,\infty)$ such that for all $\xi\in\partial U$, for all $0<r<C_{0}^{-1}\diam(U,d)$ and any two non-negative functions $u, v$ that are $\mathcal{E}$-harmonic in $B_{U}(\xi, A_{0}r)$ with Dirichlet boundary condition along $\partial U\cap B_{U}(\xi,2A_{0}r)$, we have \begin{equation}
        \esssup_{x\in B_{U}(\xi,r)}\frac{u(x)}{v(x)}\leq C \essinf_{x\in B_{U}(\xi,r)}\frac{u(x)}{v(x)}.
    \end{equation}
    where $C$ depends only on $A$, the metric $d$ and the constants that appear in elliptic Harnack inequality; $C_{0}$ and $A_{0}$ depend only on $A$ and the metric $d$.
\end{theorem}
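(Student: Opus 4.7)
The plan is to follow the Aikawa-type scheme adapted to the measure metric Dirichlet form setting as in \cite{GSC11, Lie15, BM19}, proceeding in three stages: a geometric preparation, a Carleson-type upper bound, and a ratio comparison. First, for each $\xi\in\partial U$ and admissible $r$, I would use the $A$-uniform property to select a corkscrew point $\xi_{r}\in U$ with $d(\xi,\xi_{r})\asymp r$ and $\dist(\xi_{r},\partial U)\gtrsim r$, and exploit the uniform curve axiom to build, between any two interior points $x,y\in U$, a Harnack chain of balls contained in $U$ with comparable radii at controlled distance from $\partial U$ and length depending only on $d(x,y)$, $\dist(x,\partial U)$, $\dist(y,\partial U)$, and $A$. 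Iterating EHI along such chains would yield a quantitative interior Harnack principle $h(x)\leq Nh(y)$ for every non-negative $\mathcal{E}$-harmonic $h$, with $N$ depending only on the geometric data.

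The main analytic step will be a Carleson-type estimate: there exists $C_{1}$ such that for every non-negative $u\in\mathcal{F}_{\loc}^{U}$ that is $\mathcal{E}$-harmonic on $B_{U}(\xi,A_{0}r)$ with Dirichlet boundary condition on $\partial U\cap B_{U}(\xi,2A_{0}r)$,
\begin{equation*}
\esssup_{x\in B_{U}(\xi,r)} u(x) \leq C_{1}\,u(\xi_{r}).
\end{equation*}
I would prove this by a box iteration: if $u(x_{0})\gg u(\xi_{r})$ at some $x_{0}\in B_{U}(\xi,r)$, then EHI on $B(x_{0},\tfrac{1}{2}\dist(x_{0},\partial U))$ propagates the bound outward, and the uniform structure lets me locate a new point $x_{1}$ with $\dist(x_{1},\partial U)$ smaller by a definite factor while $u(x_{1})\geq \eta\, u(x_{0})$ for some fixed $\eta>1$. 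Iterating produces a sequence along which $u$ grows geometrically while its distance to $\partial U$ shrinks geometrically, contradicting the Dirichlet vanishing condition (formulated via quasi-continuity and the local Dirichlet space).

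With the Carleson estimate in hand, the BHP follows from a dyadic box argument. Given two such functions $u,v$, the interior Harnack chain of the first step yields $v(x)\gtrsim v(\xi_{r})$ on a corkscrew neighborhood of $\xi_{r}$, while Carleson gives $u(x)\lesssim u(\xi_{r})$ throughout $B_{U}(\xi,r)$. Combining these bounds with the symmetric pair obtained by swapping the roles of $u$ and $v$ yields the desired two-sided ratio estimate
\begin{equation*}
\esssup_{B_{U}(\xi,r)}\frac{u}{v} \leq C\,\frac{u(\xi_{r})}{v(\xi_{r})} \leq C^{2}\essinf_{B_{U}(\xi,r)}\frac{u}{v},
\end{equation*}
with constants tracked through the finitely many EHI applications along the Harnack chains.

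The hardest part will be the Carleson estimate without access to Green functions: standard proofs, including that of \cite{BM19}, rely on sharp two-sided comparability between harmonic functions and $G(\cdot,\xi_{r})$. Replacing this by an iteration that uses only EHI, metric doubling, and the uniform domain geometry requires a careful box propagation, and the precise encoding of the Dirichlet boundary data in $\mathcal{F}_{\loc}^{U}$ (so that quasi-continuous representatives actually vanish on $\partial U\cap B_{U}(\xi,2A_{0}r)$ in the correct capacitary sense) must be handled delicately. This is where the bulk of the technical effort is expected to lie.
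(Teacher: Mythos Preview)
Your plan has a genuine gap at the final ``ratio comparison'' step. The Carleson estimate $\esssup_{B_{U}(\xi,r)} u \leq C_{1}\,u(\xi_{r})$ together with interior Harnack gives only an \emph{upper} bound on $u$ and a lower bound on $v$ \emph{near the corkscrew point}, not throughout $B_{U}(\xi,r)$. To bound $u(x)/v(x)$ at a point $x$ close to $\partial U$ you need a lower bound on $v(x)$ there, and $v(x)$ tends to zero as $x\to\partial U$; Carleson applied to $v$ says $v(x)\leq C_{1}v(\xi_{r})$, which goes the wrong way. What BHP really asserts is that $u$ and $v$ vanish at the \emph{same rate} along $\partial U$, and this rate comparison is not a consequence of the one-sided Carleson bound plus symmetry in $(u,v)$. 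In the paper this is handled by proving a two-sided Green function inequality (Lemmas~\ref{l-g-cp-bd}--\ref{l-g-cp-u} and Proposition~\ref{p-2sdd}): $g_{D}(x_{1},y)/g_{D}(x_{2},y)$ is comparable for all $y$ on an intermediate sphere, and then a representation $u(x)=\int g_{D}(x,y)\,\dif\nu_{u}(y)$ (Proposition~\ref{p-blyg}) transfers this to arbitrary $u,v$. The lower bound on $g_{D}$ (Lemma~\ref{l-g-cp-l}) is obtained via a maximum principle (Lemma~\ref{l-g-mp2}) and is precisely the ingredient your outline lacks.

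A second issue is your proposed iteration for Carleson. Finding $x_{1}$ with $u(x_{1})\geq\eta\,u(x_{0})$ for some fixed $\eta>1$ is not what EHI gives you; EHI yields upper, not lower, propagation along a chain. The standard box argument (Lemma~\ref{l-hmkm} here, following \cite{Aik01,BB91}) controls harmonic measure by a Green function ratio using capacitary width estimates and a telescoping over level sets of $g_{D}(\cdot,\xi_{r})$; it is not a pure EHI iteration. Finally, note that the paper does \emph{not} avoid Green functions: under EHI and metric doubling, their existence and the needed estimates are supplied by \cite{BCM22} (Theorems~\ref{t-g-ex} and~\ref{t-g-cp}), so there is no obstruction to using them, and the Aikawa scheme with $g_{D}$ is the efficient route.
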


{As an application, the above boundary Harnack principle can be used to identify the Martin boundary of a bounded uniform domain as its topological boundary \cite[Corollary 3]{Aik01}. In a recent work of Kajino and Murugan \cite{KM23a}, the boundary Harnack principle shown in Theorem \ref{t-bhp} together with Moser’s iteration techniques \cite[Lemma 3.10]{KM23a} ensure the continuous extension of \emph{Naïm kernel} and \emph{Martin kernel} up to the topological boundary of uniform domains, in the context of metric measure space that need not contain any non-constant rectifiable curves. Since the {Naïm kernel} on the topological boundary is shown to be the jump density of the trace process with respect to harmonic measure \cite[Theorem 5.12]{KM23a}, such extension plays an important role in obtaining the heat kernel estimate of trace process on the boundary.}

The contents of this paper are as follows. In Section \ref{s-mmd-g} we give the definitions of some terminologies of Dirichlet form theory and some facts about Green functions. Section \ref{s-ud} reviews the definition and basic properties of uniform domains. Section \ref{s-proof} gives the proof of Theorem \ref{t-bhp}. We will follow Aikawa's method in \cite{Aik01}, which reduces BHP to the estimates of Green functions.

\begin{notation}
	Throughout this paper, we use the following notation.
\begin{enumerate}[label=\textup{(\roman*)},align=right,leftmargin=*,topsep=5pt,parsep=0pt,itemsep=2pt]
		\item  The symbols $\subset$ and $\supset$ for set inclusion
		\emph{allow} the case of the equality.
 		\item Let $X$ be a non-empty set. We define $\one_{A}=\one_{A}^{X}\in\mathbb{R}^{X}$ for $A\subset X$ by
	 \[\one_{A}(x):=\one_{A}^{X}(x):= \begin{cases}
	 	1 & \mbox{if $x \in A$,}\\
	 	0 & \mbox{if $x \notin A$.}
	 \end{cases} \]
	 
		\item Let $X$ be a topological space. We set
		$C(X):=\{f\mid\textrm{$f:X\to\mathbb{R}$, $f$ is continuous}\}$ and
		$C_c(X):=\{f\in C(X)\mid\textrm{$X\setminus f^{-1}(0)$ has compact closure in $X$}\}$.
		\item In a metric space $(X,d)$, $B(x,r)$ is the open ball centered at $x \in X$ of radius $r>0$.
	  For a subset $A \subset X$, we use the notation $B_A(x,r) := A \cap B(x,r)$ for $x \in X, r>0$.
	  \item Given a ball $B:=B_U(x,r)$ (respectively $B:=B(x,r)$) and $K>0$, by $KB$ we denote the ball $B_U(x,Kr)$ (resp. $B(x,Kr)$).
	  \item For a set $A \subset X$, we write $\overline{A}, A^\circ, \partial A= \overline{A} \setminus A^\circ$ to denote its closure, interior and boundary respectively. Write $A_{\mathrm{diag}}=\{(x,x)\in X\times X: x\in A\}$.
	  \item For a non-empty open set $U \subsetneq X$, set \[\delta_{U}(x):=\dist(x,{X}\setminus U)=\inf\{d(x, y):y\in{X}\setminus U\} \quad\text{for all }x\in U.\]
	\end{enumerate}
\end{notation}

\section{Metric measure Dirichlet space and Green functions}\label{s-mmd-g}

Throughout this paper, we consider a complete metric space $(\mathcal{X},d)$, and a Radon measure $m$ on $(\mathcal{X},d)$ with full support, i.e., a Borel measure $m$ on $\mathcal{X}$ which is finite on any compact subset of $\mathcal{X}$ and strictly positive on any non-empty open subset of $\mathcal{X}$. We set $\diam(A, d)=\sup_{x, y\in A}d(x, y)$ for $A\subset\mathcal{X}$ ($\sup\emptyset:=0$). We will only consider the setting for a metric doubling space.

\begin{definition}
    A metric space $(\mathcal{X},d)$ is said to be \emph{metric doubling} if there exists $N\geq2$ such that every ball $B(x, R)$ can be covered by $N$ balls of radii $R/2$ for all $x\in \mathcal{X}$, $R>0$.
\end{definition}
\begin{remark}\label{r-dbct}
\begin{enumerate}[label=\textup{(\roman*)},align=left,leftmargin=*,topsep=5pt,parsep=0pt,itemsep=2pt]
 \item It is known that every complete metric doubling metric space is separable \cite{Ass83} and locally compact \cite[Exercise 10.17]{Hei01}.
\item An alternate but equivalent definition for metric doubling spaces is used in this work. A metric space $(\mathcal{X},d)$ is metric doubling if and only if, there exists some $\alpha>0$ with the property that every ball of radius $r$ has at most $C_{D}\epsilon^{-\alpha}$ disjoint points of mutual distance at $\epsilon r$, for some $C_{D}\geq1$ independent of the ball. See \cite[Exercise 10.17]{Hei01}. We will fix these two constants $C_{D}$ and $\alpha$ in this paper and refer them as \emph{doubling constants}.
\end{enumerate}
\end{remark}

Let $(\mathcal{E},\mathcal{F})$ be a \emph{regular, symmetric Dirichlet form} on $L^{2}(\mathcal{X},m)$; that is, 
\begin{enumerate}[label=\textup{(\roman*)},align=left,leftmargin=*,topsep=5pt,parsep=0pt,itemsep=2pt]
    \item $\mathcal{F}$ is a dense linear subspace of $L^{2}(\mathcal{X},m)$;
    \item $\mathcal{E}$ is a non-negative definite symmetric bilinear form on $\mathcal{F}\times\mathcal{F}$;
    \item (Closedness) $\mathcal{F}$ is a Hilbert space with inner product $\mathcal{E}_{1}:=\mathcal{E}+\langle\cdot,\cdot\rangle_{L^{2}(\mathcal{X},m)}$;
    \item (Markovian) for every $f\in \mathcal{F}$, $h:=(0\vee f)\wedge1\in\mathcal{F}$ and $\mathcal{E}(h, h)\leq\mathcal{E}(f, f)$;
    \item (Regularity) $\mathcal{F}\cap C_{c}(\mathcal{X})$ is dense both in $(\mathcal{F},\sqrt{\mathcal{E}_{1}})$ and in $(C_{c}(\mathcal{X}),\lVert\cdot\rVert_{\sup})$.
\end{enumerate}
We say a regular, symmetric Dirichlet form $(\mathcal{E},\mathcal{F})$ is \emph{strongly local} if, in addition,
\begin{enumerate}
  \item[(vi)] $\mathcal{E}(f, g)=0$ for any $f, g\in\mathcal{F}$ with $\supp_{m}(f)$, $\supp_{m}(g)$ compact and $\supp_{m}(f-a\one_{\mathcal{X}})\cap \supp_{m}(g)=\emptyset$ for some $a\in\mathbb{R}$. Here $\supp_{m}(f)$ denotes the support of the measure $|f|\dif m$.
\end{enumerate}

We recall some analytic notations associated with a regular Dirichlet form.
\begin{definition} Given a regular Dirichlet form $(\mathcal{E},\mathcal{F})$ on $L^2(\mathcal{X}, m)$.
\begin{enumerate}[label=\textup{(\roman*)},align=left,leftmargin=*,topsep=5pt,parsep=0pt,itemsep=2pt]
\item An increasing sequence $\{F_{k}; k\geq1\}$ of closed subsets of $\mathcal{X}$ is said to be an \emph{$\mathcal{E}$-nest} if $\bigcup_{k>1}\mathcal{F}_{F_k}$ is $\sqrt{\mathcal{E}_1}$-dense in $\mathcal{F} , $ where $\mathcal{F} _{F_k}: = \{ f\in \mathcal{F} : f= $ 0 $m$-a.e. on $\mathcal{X}\backslash F_k\}$. 
\item A set $N\subset\mathcal{X}$ is said to be \emph{$\mathcal{E}$-polar} if there is an $\mathcal{E}$-nest $\{F_{k}; k\geq1\}$ so that $N\subset\mathcal{X}\backslash\bigcup_{k\geq1}F_k$.
\item A statement depending on $x\in A$ is said to hold \emph{$\mathcal{E}$-quasi-everywhere} ($\mathcal{E}$-q.e. in abbreviation) if there is an $\mathcal{E}$-polar set $N\subset A$ so that the statement is true for every $x\in A\setminus N$.  
\item A real-valued function $f$ is said to be in the \emph{extended Dirichlet space} $\mathcal{F}_e$ if there is an $\mathcal{E}$-Cauchy sequence $\{f_{k}; k\geq1\}\subset\mathcal{F}$ so that $\lim_{k\to\infty}f_k=f$ $m$-a.e. on $\mathcal{X}$, and we define $\mathcal{E}(f, f)=\lim_{k\to\infty}\mathcal{E}(f_{k}, f_{k})$.
\item A function $f$ is said to be $\mathcal{E}$-quasi-continuous on $\mathcal{X}$ if there is an $\mathcal{E}$-nest $\{F_{k}; k\geq1\}$ so that $f|_{F_{k}}\in C(F_{k})$ for every $k\geq1$, where $C(F_{k}):= \{u : F_{k}\rightarrow\mathbb{R}: u\text{ is continuous}\}$.
\item $(\mathcal{E},\mathcal{F})$ is said to be \emph{transient} if there exists a bounded $g\in L^1(\mathcal{X};m)$, called \emph{reference function}, that is strictly positive on $\mathcal{X}$ so that $$\int_{\mathcal{X}}|u(x)|g(x)\dif m(x)\leq\mathcal{E}(u,u)^{1/2}\quad\text{for every }u\in\mathcal{F}.$$
\end{enumerate}
\end{definition}
\begin{remark}\label{r-quasicont}
For a regular Dirichlet form $(\mathcal{E},\mathcal{F})$ on $L^2(\mathcal{X};m)$, every $f\in \mathcal{F}_e$ has an $m$-version that is $\mathcal{E}$-quasi-continuous on $\mathcal{X}$, which is unique up to an $\mathcal{E}$-polar set \cite[Theorem 2.3.4]{CF12}.  We always take a function $f\in \mathcal{F}_e$ to be represented by its $\mathcal{E}$-quasi-continuous version. Under this convention, the statements like ``$f=0\ \mathcal{E}\text{-q.e. on }\mathcal{X}$" will make sense for $f\in \mathcal{F}_e$.
\end{remark}
We define \emph{relative capacity} as follows. For any open subset $D\subset\mathcal{X}$ with non $\mathcal{E}$-polar complement, and any set $A\subset D$, define $$\Capacity_{D}(A):=\inf\{\mathcal{E}(f, f):f\in(\mathcal{F}^{D})_{e},\:f\geq1\ \mathcal{E}\text{-q.e. on }A,\}$$ where \[ (\mathcal{F}^{D})_{e}:=\left\{u\in \mathcal{F}_{e}: u=0\ \mathcal{E}\text{-q.e. on }\mathcal{X}\setminus D\right\}.\]
\begin{definition}[Local Dirichlet space]
    Let $V\subset D$ be open subsets of $\mathcal{X}$. Define the following function spaces:
    \begin{align*}
        \mathcal{F}^{0}(D)&:=\left\{f\in\mathcal{F}: f=0\ \mathcal{E}\text{-q.e. on }\mathcal{X}\setminus D \right\},\\
\mathcal{F}_{\mathrm{loc}}(D)&:=
  \Biggl\{f\in L^{2}_{\mathrm{loc}}(\mathcal{X},m)  \Biggm|
  \begin{minipage}{240pt}
For any relatively compact open subset $A$ of $D$,  there exists $f^{\#}\in \mathcal{F}$ such that $f^{\#}=f\ m$-a.e. on $A$.
\end{minipage}
\Biggr\},
\\ 
\mathcal{F}^{0}_{\mathrm{loc}}(D,V)&:=\Biggl\{f\in L^{2}_{\mathrm{loc}}(\mathcal{X},m)  \Biggm|
\begin{minipage}{240pt}
For any open subset $A$ of $\mathcal{X}$ that is relatively compact in $\overline{D}$ with $d(A,D\setminus V)>0$, there exists $f^{\#}\in \mathcal{F}^{0}(D)$ such that $f^{\#}=f\ m$-a.e. on $A$.
\end{minipage}
\Biggr\}.
    \end{align*}
\end{definition}

Denote $\{P_{t}\}$ as the strongly continuous symmetric contractive semigroup on $L^{2}(\mathcal{X},m)$ corresponding to $(\mathcal{E},\mathcal{F})$. By \cite[Theorem 7.2.1]{FOT11}, there is an $m$-symmetric continuous Hunt process $X=\left\{X_{t},t\geq0;\ \mathbb{P}^{x},x\in\mathcal{X}\right\}$ on $\mathcal{X}$ associated with $(\mathcal{E},\mathcal{F})$ in the sense that
\[P_{t}f(x)=\mathbb{E}^{x}\left[f(X_{t})\right],\ m\text{-a.e. }x\in\mathcal{X}\]
for all $f\in L^{\infty}(\mathcal{X},m)$ and every $t>0$. We write $\{\mathscr{F}_{t}\}$ for the minimum augmented admissible filtration of $X$(see \cite[p.397]{CF12} for definition). For a Borel subset $B \subset \mathcal{X}$, define \[\tau_{B}:=\inf \left\{t>0: X_t \notin B\right\}.\] Then $\tau_{B}$ is a $\{\mathscr{F}_{t}\}$-stopping time \cite[Theorem A.1.19]{CF12}. A set $\mathcal{N}\subset \mathcal{X}$ is said to be \emph{Borel properly exceptional} for $X$, if $\mathcal{N}$ is Borel measurable, $m(\mathcal{N})=0$ and \[\mathbb{P}^{x}(X_{t}\in (\mathcal{X}\cup\{\partial\})\setminus \mathcal{N} \text{ for all $t>0$})=1\text{ for every $x\in\mathcal{X}\setminus \mathcal{N}$}.\] 

\begin{remark}
If $D$ is an open subset of $\mathcal{X}$, then $(\mathcal{E},\mathcal{F}^{0}(D))$ is also a regular, strongly local symmetric Dirichlet form on $L^{2}(D, m|_{D})$ \cite[Theorem 3.3.9]{CF12}. The associated Hunt process $X^{D}$ of $(\mathcal{E},\mathcal{F}^{0}(D))$ is $X$ being killed upon leaving $D$, i.e.,\[X^{D}_{t}=\begin{cases}
        X_{t}\quad &t<\tau_{D};\\ \partial\quad &t\geq\tau_{D}.
    \end{cases}\]
Here $\partial$ is referred to as ``cemetery point" in the general theory of Markov process \cite[Appendix A]{CF12}.
\end{remark}
\begin{definition}
    Let $\Omega$ be an open subset of $\mathcal{X}$. \begin{enumerate}[label=\textup{(\roman*)},align=left,leftmargin=*,topsep=5pt,parsep=0pt,itemsep=2pt]
        \item We say a function $u$ is \emph{regular harmonic} in $\Omega$ with respect to the process $X$ if \[\mathbb{E}^{x}\left[|u(X_{\tau_{\Omega}})|\right]<\infty \text{ and }u(x)=\mathbb{E}^{x}\left[u(X_{\tau_{\Omega}})\right]\text{ for $\mathcal{E}$-q.e. }x\in \Omega.\]
        \item We say a function $u$ is \emph{$\mathcal{E}$-harmonic} in $\Omega$ if $u\in\mathcal{F}_{\mathrm{loc}}(\Omega)$ and \[\mathcal{E}(u, v)=0\text{ for every }v\in \mathcal{F}\cap C_{c}(\Omega).\]
        \item Let $V\subset \Omega$ be open subsets of $\mathcal{X}$. We say an $\mathcal{E}$-harmonic function $u$ in $V$ satisfies \emph{Dirichlet boundary condition along the boundary} $\partial \Omega\cap\overline{V}$ if $u\in\mathcal{F}^{0}_{\mathrm{loc}}(\Omega,V).$
    \end{enumerate}
\end{definition}

\begin{definition}[Elliptic Harnack inequality] We say that $(\mathcal{E}, \mathcal{F})$ satisfies the \emph{elliptic Harnack inequality} \hypertarget{EHI} with constants $C_{\mathrm{H}}<\infty$ and $\delta_{\mathrm{H}}\in(0,1)$, denoted $\hyperlink{EHI}{\mathrm{EHI}(C_{\mathrm{H}},\delta_{\mathrm{H}})}$, if for any ball $B(x, R) \subset \mathcal{X}$, and any non-negative function $u \in \mathcal{F}_{\mathrm{loc}}(B(x, R))$ that is $\mathcal{E}$-harmonic on $B(x, R)$, we have
$$
\esssup_{z \in B(x, \delta_{\mathrm{H}} R)} u(z) \leq C_{\mathrm{H}} \essinf_{z \in B(x, \delta_{\mathrm{H}} R)} u(z).
$$
We say that $\EHI$ holds if $\hyperlink{EHI}{\mathrm{EHI}(C_{\mathrm{H}},\delta_{\mathrm{H}})}$ holds for some $C_{\mathrm{H}}<\infty$ and $\delta_{\mathrm{H}} \in(0,1)$.
\end{definition}
\begin{remark}\label{r-escape}
\begin{enumerate}[label=\textup{(\roman*)},align=left,leftmargin=*,topsep=5pt,parsep=0pt,itemsep=2pt]
	\item If $\mathcal{X}$ is connected and $\EHI$ holds, then by \cite[Theorem 4.8, Proposition 3.2]{BCM22}, for any relatively compact open subset $\Omega\subset\mathcal{X}$ such that $\mathcal{X}\setminus\Omega$ is not $\mathcal{E}$-polar, we have $\mathbb{P}^{x}(\tau_{\Omega}<\infty)=1 $ for $\mathcal{E}$-q.e. $x\in \Omega$.
	\item By \cite[Theorem 5.4]{BCM22}, if $(\mathcal{X},d)$ is a complete metric space and $\EHI$ holds, then it is metric doubling if and only if it is \emph{relatively $K$ ball connected} \hypertarget{RBC(K)} ($\hyperlink{RBC(K)}{\mathrm{RBC}(K)}$ in abbreviation) for some $K\geq2$, i.e., for each $\varepsilon\in(0,1)$, there exists an integer $N=N(\varepsilon)\geq1$ such that if $x_0 \in\mathcal{X}$, $R>0$ and $x,y \in B(x_0,R)$, then there exists a chain of balls $B(z_i,\varepsilon R)$ for $i = 0,...,N$ such that $z_0 = x, z_N = y, B(z_i,\varepsilon R) \subset B(x_0,KR)$ for each $i = 0,...,N$ and $d(z_{i-1},z_i) < \varepsilon R$ for $1\leq i\leq N$. This relatively $K$ ball connected property will be used when we estimate Green functions (see Theorem \ref{t-g-cp} below).
\end{enumerate}
\end{remark}

\begin{definition}[Harnack chain] Let $U \subsetneq \mathcal{X}$ be a connected open set and $M\geq 1$. For $x, y \in$ $U$, an $M$-Harnack chain from $x$ to $y$ in $U$ is a sequence of balls $B_1, B_2, \ldots, B_n$ each contained in $U$ such that $x \in M^{-1} B_1, y \in M^{-1} B_n$, and $M^{-1} B_i \cap M^{-1} B_{i+1} \neq \emptyset$, for $i=1,2, \ldots, n-1$. The number $n$ of balls in a Harnack chain is called the length of the Harnack chain.
\end{definition}
\begin{remark}
    Suppose that $(\mathcal{E}, \mathcal{F})$ satisfies $\hyperlink{EHI}{\mathrm{EHI}(C_{\mathrm{H}},\delta_{\mathrm{H}})}$ . If $u$ is a non-negative continuous $\mathcal{E}$-harmonic function on a domain $U$, and if there is a $\delta_{\mathrm{H}}^{-1}$-Harnack chain from $x_{1}$ to $x_{2}$ whose length is less than $L(x_{1},x_{2};\delta_{\mathrm{H}}^{-1})$, then
\begin{equation}\label{e-hccp}
    C_{\mathrm{H}}^{-L\left(x_1, x_2; \delta_{\mathrm{H}}^{-1}\right)} u\left(x_1\right) \leq u\left(x_2\right) \leq C_{\mathrm{H}}^{L\left(x_1, x_2 ; \delta_{\mathrm{H}}^{-1}\right)} u\left(x_1\right).
\end{equation}
\end{remark}
The next proposition gives an estimate on the length of Harnack chain using metric doubling condition.
\begin{proposition}[Length of Harnack chain]\label{p-hc-p}Let $U$ be a domain in $\mathcal{X}$. Let $x,y\in U$ and $\gamma$ be a continuous curve in $U$ from $x$ to $y$. Assume that  $\delta_{U}(z)\geq\delta>0$ for all $z\in\gamma$. Then for any $M>1$ and any $r\in(0,\delta)$, there exists an $M$-Harnack chain $\{B_{j}=B(z_{j},r)\}$ from $x$ to $y$ in $U$, with $z_{j}\in\gamma$ and its length less than $C_{D}(1+r^{-1}M\diam(\gamma))^{\alpha}$, where $C_{D}$ and $\alpha$ are the doubling constants.
\end{proposition}
\begin{proof}
   Let $M>1$ and $r\in(0,\delta)$. Take a maximum $M^{-1}r$ separated set in $\gamma$, say $\{z_{j}\}_{j\in J}$, which exists by Zorn's Lemma. By definition, $\bigcup_{j\in J}B(z_{j},M^{-1}r)$ covers $\gamma$ and the balls $B(z_{j},(2M)^{-1}r)$, $j\in J$ are mutually disjoint. By the metric doubling condition of $(\mathcal{X},d)$, there exist some $C_{D}$ and $\alpha>0$ such that $|J|\leq C_{D}(1+r^{-1}M\diam(\gamma))^{\alpha}$. We can relabel $z_{j}$ so that $x\in B(z_{1},M^{-1}r)$, $y\in B(z_{|J|},M^{-1}r)$ and $B(z_{j},M^{-1}r)\cap B(z_{j+1},M^{-1}r)\neq\emptyset$ for all $j=1,2,\ldots,|J|-1$. Thus $\{B_{j}=B(z_{j},r)\}_{j=1}^{|J|}$ forms an $M$-Harnack chain from $x$ to $y$.
\end{proof}
The next two theorems are taken from \cite[Section 4, Section 5]{BCM22}, which ensure the existence of \emph{regular Green functions} and some related estimates under $\EHI$. \begin{theorem}[Existence of Green function, {{\cite[Theorem 4.6, Theorem 4.8]{BCM22}}}]\label{t-g-ex} Let $(\mathcal{X},d)$ be a complete metric doubling space and $(\mathcal{X},d, m,\mathcal{E},\mathcal{F})$ satisfies $\EHI$. Then $(\mathcal{E},\mathcal{F})$ has regular Green function, in the sense that, for any bounded, non-empty open set $D\subset\mathcal{X}$ whose complement $\mathcal{X}\setminus D$ is non $\mathcal{E}$-polar, there exists a non-negative $\mathcal{B}(D\times D)$-measurable function $g_{D}(x, y)$ on $(D\times D)\setminus D_{\mathrm{diag}}$ with the following properties:
    \begin{enumerate}[label=\textup{(\roman*)},align=left,leftmargin=*,topsep=5pt,parsep=0pt,itemsep=2pt]
        \item\label{lb.g-ex1} {\normalfont{(Symmetry)}} $g_D(x, y)=g_D(y, x)$ for all $(x, y) \in (D \times D) \setminus D_{\mathrm{diag}}$.
\item\label{lb.g-ex2}  {\normalfont{(Continuity)}} $g_D(x, y)$ is $[0,\infty)$-valued and jointly continuous in $(x, y) \in (D \times D) \setminus D_{\mathrm{diag}}$.
\item\label{lb.g-ex3} {\normalfont{(Occupation density)}} There is a Borel properly exceptional set $\mathcal{N}_D$ of $\mathcal{X}$ such that
$$
\mathbb{E}^x \left[\int_0^{\tau_D} f\left(X_s\right) \dif s\right]=\int_D g_D(x, y) f(y) \dif m(y) \quad \text { for every } x \in D \setminus \mathcal{N}_D .
$$
for any $f \in \mathcal{B}_{+}(D)$.
\item\label{lb.g-ex4} {\normalfont{(Harmonicity)}} For any fixed $y \in D$, the function $x \mapsto g_{D}(x, y)$ is in $\mathcal{F}_{\text {loc }}^{D \setminus\{y\}}$ and for any open subset $\Omega$ of $D$ with $y\notin\overline{\Omega}$, $x \mapsto g_D(x, y)$ is regular harmonic in $\Omega$ with respect to $X^{\Omega}$.
\item\label{lb.g-ex5}  {\normalfont{(Maximum principles)}} If $\Omega$ is a relative compact open subset of $D$ and  $x_0 \in \Omega$, then
$$
\inf _{\Omega \backslash\left\{x_0\right\}} g_D\left(x_0, \cdot\right)=\inf _{\partial \Omega} g_D\left(x_0, \cdot\right), \quad \sup _{D \backslash \Omega} g_D\left(x_0, \cdot\right)=\sup _{\partial \Omega} g_D\left(x_0, \cdot\right).
$$
\end{enumerate}
\end{theorem}
\begin{remark}
In view of Theorem \ref{t-g-ex}-\ref{lb.g-ex2}, \ref{lb.g-ex3} and \cite[Remark 2.7-(ii), Proposition 2.9-(iii)]{BCM22}, we infer for any fixed $y \in D$, the function $x \mapsto g_{D}(x, y)$ is $\mathcal{E}$-harmonic in $D \setminus\{y\}$.
\end{remark}
\begin{theorem}[Comparison of Green function,{{\cite[Section 5]{BCM22}}}]\label{t-g-cp} Let $(\mathcal{X},d)$ be a complete metric doubling space satisfies $\hyperlink{RBC(K)}{\mathrm{RBC}(K)}$ for $K\geq2$, and $(\mathcal{X},d, m, \mathcal{E},\mathcal{F})$ satisfies $\hyperlink{EHI}{\mathrm{EHI}(C_{\mathrm{H}},\delta_{\mathrm{H}})}$.
\begin{enumerate}[label=\textup{(\roman*)},align=left,leftmargin=*,topsep=5pt,parsep=0pt,itemsep=2pt]
\item\label{lb.cp1}  There exists $\newconstant\label{c1}=\oldconstant{c1}\left(K, C_{\mathrm{H}},\delta_{\mathrm{H}}\right)>1$ such that for all open sets $D$ in $\mathcal{X}$ whose complement $\mathcal{X}\setminus D$ is not $\mathcal{E}$-polar, and for all $x \in \mathcal{X}, r>0$ that satisfy $B\left(x, (1+2K) r\right) \subset D$,
\begin{equation}\label{e-hg}
\max_{y\in\partial B(x,r)}g_{D}(x,y)\leq \oldconstant{c1}\min_{y\in\partial B(x,r)}g_{D}(x,y),
\end{equation}
\begin{equation}\label{e-g-cap}
\min_{y\in\partial B(x,r)}g_{D}(x,y)\leq \Capacity_{D}(B(x,r))^{-1}\leq \oldconstant{c1}\min_{y\in\partial B(x,r)}g_{D}(x,y)
\end{equation}

\item\label{lb.cp2} For all $A_{1}>1$, there exists $\newconstant\label{c2}=\oldconstant{c2}\left(K, A_{1}, C_{\mathrm{H}},\delta_{\mathrm{H}}\right)>1$ such that for all open sets $D$ in $\mathcal{X}$ whose complement $\mathcal{X}\setminus D$ is not $\mathcal{E}$-polar, and for all $x \in \mathcal{X}, r>0$ that satisfy $B\left(x, (3+2K) r\right) \subset D$, we have for all $x_1, y_1, x_2, y_2 \in B\left(x, r\right)$ satisfying $d\left(x_i, y_i\right) \geq r /A_{1}$, $i=1,2$, that
$$g_D\left(x_1, y_1\right) \leq \oldconstant{c2} g_D\left(x_2, y_2\right).$$

\item\label{lb.cp3} For all $1<A_{1}\leq A_{2}<\infty$ and $0<s\leq r \leq \diam(\mathcal{X}) / (6(A_{2}\vee(9K)))$, there exist $\newconstant\label{c3}=\oldconstant{c3}\left(A_{1},A_{2},K, r/s, C_{\mathrm{H}},\delta_{\mathrm{H}}\right)>1$ and $\newconstant\label{c3a}=\oldconstant{c3a}\left(A_{1},A_{2},K, C_{\mathrm{H}},\delta_{\mathrm{H}}\right)>1$ such that for all $x \in \mathcal{X}$, \[\oldconstant{c3}^{-1}\Capacity_{B\left(x, A_{1}s\right)}\left(B\left(x, s\right)\right)\leq\Capacity_{B\left(x, A_{1}r\right)}\left(B\left(x, r\right)\right)\leq\oldconstant{c3}\Capacity_{B\left(x, A_{1}s\right)}\left(B\left(x, s\right)\right),\]
\[\Capacity_{B\left(x, A_{2}r\right)}\left(B\left(x, r\right)\right)\leq\Capacity_{B\left(x, A_{1}r\right)}\left(B\left(x, r\right)\right)\leq\oldconstant{c3a} \Capacity_{B\left(x, A_{2}r\right)}\left(B\left(x, r\right)\right).\]

\item\label{lb.cp4} For all $A_1 \geq 2$ and $r>0$, there exists $\newconstant\label{c4}=\oldconstant{c4}\left(A_1, C_{\mathrm{H}},\delta_{\mathrm{H}}\right)>1$ such that if $B(x_{0},2A_{1}r)^{c}$ is non-empty, then
$$
g_{B\left(x_{0}, r\right)}(x, y) \leq g_{B\left(x_{0}, A_{1} r\right)}(x, y) \leq \oldconstant{c4} g_{B\left(x_{0},r\right)}(x, y) \quad\text{for $x,y\in B(x_{0},r/(8K))$, $x\neq y$.}
$$
\end{enumerate}
\end{theorem}
\begin{proof} 
\begin{enumerate}[label=\textup{(\roman*)},align=left,leftmargin=*,topsep=5pt,parsep=0pt,itemsep=2pt]
\item[\ref{lb.cp1}] By \cite[Proposition 5.7]{BCM22}, we have \[\sup_{y\in D\setminus B(x,r)}g_{D}(x,y)\leq \oldconstant{c1} \inf_{y\in \overline{B(x,r)}\setminus\{x\} }g_{D}(x,y).\]
Combining with the maximum principles in Theorem \ref{t-g-ex}, $\sup_{y\in D\setminus B(x,r)}g_{D}(x,y)=\sup_{y\in \partial B(x,r)}g_{D}(x,y)$ and $\inf_{y\in \overline{B(x,r)}\setminus\{x\} }g_{D}(x,y)=\inf_{y\in \partial{B(x,r)} }g_{D}(x,y)$. As $\partial B(x,r)$ is compact and $g_{D}(x,\cdot)$ is continuous on $\partial B(x,r)$, we can replace $\sup$ and $\inf$ by $\max$ and $\min$, respectively. This gives \eqref{e-hg}. Inequalities \eqref{e-g-cap} is proved in \cite[Lemma 5.10]{BCM22}.
\item[\ref{lb.cp2}] The proof is a minor modification of the proof of \cite[Lemma 5.9]{BCM22}. A counting argument as in \cite[Lemma 5.9]{BCM22} shows that there exists $z_{0}\in B(x,r)$ such that $d(z_{0},w)\geq r/9$ for $w\in\{x_{1},x_{2},y_{1},y_{2}\}$. Then we can establish the estimate by applying \cite[Corollary 5.8]{BCM22} with $\delta=\min(1/(2A_{1}),1/18)$.
\item[\ref{lb.cp3}] See  \cite[Lemma 5.22, Lemma 5.23]{BCM22}.
\item[\ref{lb.cp4}] The first inequality holds by domain monotonicity of Green function. The latter inequality is an iteration of \cite[Lemma 5.18]{BCM22}.
\end{enumerate}
\end{proof}
We also need the following version of maximum principle for Green function. We notice that \cite[Lemma 4.12]{BM19} provides a proof by using the maximum principles in \cite[Lemma 4.1(ii)]{GH14}, which is stated under the assumption of the first Dirichlet eigenvalue being strictly positive. Instead, we use the regular harmonicity of Green functions provided in Theorem \ref{t-g-ex}.
 \begin{lemma}\label{l-g-mp2}
    Let $(\mathcal{X},d)$ be a complete metric doubling space and $(\mathcal{X},d,m,\mathcal{E},\mathcal{F})$ satisfies $\EHI$. Given a bounded, non-empty open set $D\subset\mathcal{X}$ whose complement $\mathcal{X}\setminus D$ is non $\mathcal{E}$-polar. Let $c_{0}\in(0,\infty)$, $y, y^{\star}\in D$ such that $B(y^{\star},(1+2K)r)$ is a relative compact open subset of $D$,  $y\notin \partial B(y^{\star},r)$ and \[g_{D}(y, x)\geq c_{0}g_{D}(y^{\star},x)\quad \text{for all }x\in\partial B(y^{\star},r).\]Then \[g_{D}(y, x)\geq c_{0}g_{D}(y^{\star},x)\quad \text{for all }x\in D\setminus(\{y\}\cup B(y^{\star},r)).\]
\end{lemma}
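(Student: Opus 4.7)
The plan is to study the difference $h(x):=g_{D}(y,x)-c_{0}g_{D}(y^{\star},x)$ on the region $\Omega_{0}:=D\setminus\overline{B(y^{\star},r)}$, and to conclude $h\ge 0$ on $D\setminus(\{y\}\cup B(y^{\star},r))$ from the probabilistic regular-harmonicity identity of Theorem~\ref{t-g-ex}(iv). The overall idea is to write $h(x)=\mathbb{E}^{x}[h(X_{\tau_{\Omega_{0}}})]$ (or an analogous formula on a perturbation of $\Omega_{0}$ when $y\in\Omega_{0}$), decompose the exit value by the components of the boundary, and feed in three inputs: the hypothesis gives $h\ge 0$ on $\partial B(y^{\star},r)$; the Dirichlet behaviour of the Green function, coming from $g_{D}(z,\cdot)\in(\mathcal{F}^{D})_{e}$ being $\mathcal{E}$-quasi-continuous with zero trace along $\partial D$, gives $h=0$ $\mathcal{E}$-q.e.\ on $\partial D$; and the contribution near $y$ is controlled by the maximum principle (Theorem~\ref{t-g-ex}(v)) combined with the Harnack estimate \eqref{e-hg}.

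I would split according to the location of $y$. When $y\in B(y^{\star},r)$, neither $g_{D}(y,\cdot)$ nor $g_{D}(y^{\star},\cdot)$ has a singularity in $\overline{\Omega_{0}}$, so by Theorem~\ref{t-g-ex}(iv) $h$ is regular harmonic in $\Omega_{0}$ with respect to $X^{\Omega_{0}}$, and for $\mathcal{E}$-q.e.\ $x\in\Omega_{0}$
\[
h(x)=\mathbb{E}^{x}\bigl[h(X_{\tau_{\Omega_{0}}});\,X_{\tau_{\Omega_{0}}}\in\partial B(y^{\star},r)\bigr]+\mathbb{E}^{x}\bigl[h(X_{\tau_{\Omega_{0}}});\,X_{\tau_{\Omega_{0}}}\in\partial D\bigr].
\]
The first term is non-negative by hypothesis, the second vanishes by the Dirichlet condition, so $h\ge 0$ $\mathcal{E}$-q.e.\ on $\Omega_{0}$; joint continuity of $g_{D}$ (Theorem~\ref{t-g-ex}(ii)) then upgrades this to every point of $\Omega_{0}$, and the hypothesis covers $\partial B(y^{\star},r)\cap D$.

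When $y\in\Omega_{0}$, the singularity of $g_{D}(y,\cdot)$ lies inside the target region, so I excise it: for small $\epsilon>0$ with $\overline{B(y,\epsilon)}\subset\Omega_{0}$ and $B(y,(1+2K)\epsilon)\subset D$, set $\Omega':=\Omega_{0}\setminus\overline{B(y,\epsilon)}$. Since $y,y^{\star}\notin\overline{\Omega'}$, both Green functions are regular harmonic in $\Omega'$ with respect to $X^{\Omega'}$, and the exit decomposition acquires one new term
\[
I_{\epsilon}(x):=\mathbb{E}^{x}\bigl[h(X_{\tau_{\Omega'}});\,X_{\tau_{\Omega'}}\in\partial B(y,\epsilon)\bigr].
\]
To control $I_{\epsilon}$ I would pick a maximiser $z^{\star}\in\partial B(y^{\star},r)$ of $g_{D}(y^{\star},\cdot)$; Theorem~\ref{t-g-ex}(v) applied to $B(y^{\star},r)$ gives $g_{D}(y^{\star},z^{\star})\ge g_{D}(y^{\star},y)$, and the hypothesis then yields $g_{D}(y,z^{\star})\ge c_{0}g_{D}(y^{\star},y)$. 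Using Theorem~\ref{t-g-ex}(v) with $B(y,\epsilon)$ and the Harnack-type bound \eqref{e-hg} for $g_{D}(y,\cdot)$ on $\partial B(y,\epsilon)$ produces a lower bound for $g_{D}(y,\cdot)$ there, which combined with the joint continuity of $g_{D}(y^{\star},\cdot)$ at $y\ne y^{\star}$ bounds $I_{\epsilon}$ from below; a tolerance-$\delta$ limit $\epsilon\downarrow 0$ in the regular-harmonicity identity then gives $h\ge 0$ $\mathcal{E}$-q.e.\ on $\Omega_{0}\setminus\{y\}$, and continuity finishes the argument.

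The main obstacle is precisely the control of $I_{\epsilon}$ in the second case: the Harnack estimate \eqref{e-hg} incurs a multiplicative factor $C_{1}>1$ that exceeds the sharp constant $c_{0}$ one wishes to preserve, so a pointwise comparison on $\partial B(y,\epsilon)$ cannot conclude on its own; instead, one must exploit the $\epsilon\downarrow 0$ limit inside the probabilistic identity, which plays the role of the positive-first-Dirichlet-eigenvalue-based maximum principle \cite[Lemma~4.1(ii)]{GH14} used in the related argument \cite[Lemma~4.12]{BM19}.
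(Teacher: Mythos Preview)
Your treatment of the case $y\in B(y^{\star},r)$ matches the paper's proof. The difficulty, as you correctly identify, lies in the case $y\notin\overline{B(y^{\star},r)}$, and here your proposed limiting argument has a genuine gap. Excising $\overline{B(y,\epsilon)}$ and letting $\epsilon\downarrow 0$ would work cleanly if the hitting probability $\mathbb{P}^{x}\bigl(\sigma_{\overline{B(y,\epsilon)}}<\tau_{\Omega_{0}}\bigr)$ tended to zero; but this is exactly the statement that $\{y\}$ is $\mathcal{E}$-polar, which is not assumed in the paper's framework. Without polarity, the contribution $I_{\epsilon}(x)$ need not vanish in the limit, and the estimates you sketch via Theorem~\ref{t-g-ex}(v) and \eqref{e-hg} only yield $h\ge (\oldconstant{c1}^{-1}-1)\,c_{0}\,g_{D}(y^{\star},y)+o(1)$ on $\partial B(y,\epsilon)$, a strictly negative lower bound that cannot recover the sharp constant $c_{0}$. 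The ``tolerance-$\delta$ limit'' you invoke does not resolve this, because the hitting probability of $\partial B(y,\epsilon)$ can remain bounded away from zero as $\epsilon\downarrow 0$.

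The paper avoids the singularity at $y$ altogether by a different device: it replaces $g_{D}(y,\cdot)$ with the smoothed average
\[
h_{n}(x)=\frac{1}{m(B(y,s/n))}\int_{B(y,s/n)}g_{D}(z,x)\,\dif m(z),
\]
which by the occupation-density identity (Theorem~\ref{t-g-ex}(iii)) equals $\mathbb{E}^{x}\bigl[\int_{0}^{\tau_{D}}f_{n}(X_{t})\,\dif t\bigr]$ and therefore, by the strong Markov property, satisfies a \emph{super}-mean-value inequality $h_{n}(x)\ge\mathbb{E}^{x}\bigl[h_{n}((X^{D})_{\tau_{V}})\bigr]$ throughout $V=D\setminus\overline{B(y^{\star},r)}$ --- no excision of a neighbourhood of $y$ is needed. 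Uniform convergence $h_{n}\to g_{D}(y,\cdot)$ on the compact set $\partial B(y^{\star},r)$ then gives, for any $c_{1}<c_{0}$ and all large $n$, strict positivity of $h_{n}-c_{1}g_{D}(y^{\star},\cdot)$ on $\partial B(y^{\star},r)$; the supermartingale property combined with regular harmonicity of $g_{D}(y^{\star},\cdot)$ in $V$ yields the inequality on $V$, and one lets $n\to\infty$, $c_{1}\uparrow c_{0}$. The passage through $c_{1}<c_{0}$ uses $\epsilon:=\min_{\partial B(y^{\star},r)}g_{D}(y^{\star},\cdot)>0$, the case $\epsilon=0$ being dispatched separately via Theorem~\ref{t-g-cp}(i) and the maximum principle.
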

\begin{proof}
Since $\EHI$ holds and $\mathcal{X}\setminus D$ is not $\mathcal{E}$-polar, we have by Remark \ref{r-escape} that, \[ \mathbb{P}^{x}(\tau_{D}<\infty)=1\quad\text{ for $\mathcal{E}$-q.e. $x\in D$}.\]  

If $y\in B(y^{\star},r)$, then the function $x\mapsto g_{D}(y, x)- c_{0}g_{D}(y^{\star},x)$ is regular harmonic in $V:=D\setminus\overline{B(y^{\star},r)}$ with respect to $X^{D}$ by Theorem \ref{t-g-ex}-\ref{lb.g-ex4}. Thus for $\mathcal{E}$-q.e. $x\in V$,
\begin{align*}g_{D}(y, x)- c_{0}g_{D}(y^{\star},x)&=\mathbb{E}^{x}\left[g_{D}(y, (X^{D})_{\tau_{V}})- c_{0}g_{D}(y^{\star},(X^{D})_{\tau_{V}})\right]\\ &=\mathbb{E}^{x}\left[\left(g_{D}(y, X_{\tau_{V}})- c_{0}g_{D}(y^{\star},X_{\tau_{V}})\right)\one_{\{\tau_{V}<\tau_{D}\}}\right] \geq 0\end{align*} since for $\mathcal{E}$-q.e. $x\in V$, $X_{\tau_{V}}\in\partial B(y^{\star},r)$ $\mathbb{P}^{x}$-a.s. on the set $\{\tau_{V}<\tau_{D}\}$.

Suppose that $y\notin \overline{B(y^{\star},r)}$. We may assume that $\epsilon:=\min_{z\in\partial B(y^{\star},r)}g_{D}(y^{\star},z)>0$. Otherwise, by Theorem \ref{t-g-cp}-\ref{lb.cp1} and Theorem \ref{t-g-ex}-\ref{lb.g-ex5}, the function $g_{D}(y^{\star},\cdot)$ vanishes on $D\setminus B(y^{\star},r)$ and the conclusion holds automatically. We choose $s>0$ small enough so that $B(y,4s)\subset V$. Define \[h_{n}(x)=\frac{1}{m(B(y, s/n))}\int_{B(y, s/n)}g_{D}(z, x)\dif m(z),\quad n\geq1\]
By the uniform continuity of $g_{D}(\cdot,\cdot)$ on the compact subset $\overline{B(y,2s)}\times\partial B(y^{\star},r)\subset (D\times D)\setminus D_{\mathrm{diag}}$, $h_{n}\rightarrow g_{D}(y,\cdot)$ uniformly on $\partial B(y^{\star},r)$ as $n\rightarrow\infty$. If we write $f_{n}=(m(B(y, s/n)))^{-1}\one_{B(y, s/n)}$, by Theorem \ref{t-g-ex}-\ref{lb.g-ex3}, \[h_{n}(x)=\mathbb{E}^{x}\left[\int_{0}^{\tau_{D}}f_{n}(X_{t})\dif t\right]=\mathbb{E}^{x}\left[\int_{0}^{\infty} f_{n}(X^{D}_{t})\dif t \right]\quad\text{for $x\in D\setminus \mathcal{N}_{D}$}.\]By the strong Markov property \cite[Theorem A.1.22 (ii)]{CF12}, we have the following super-martingale property \[h_{n}(x)\geq \mathbb{E}^{x}\left[h_{n}((X^{D})_{\tau_{V}})\right]\quad\text{for $x\in D\setminus \mathcal{N}_{D}$}.\] For any $0<c_{1}<c_{0}$, since $\epsilon=\min_{z\in\partial B(y^{\star},r)}g_{D}(y^{\star},z)>0$, we have for any $x\in \partial B(y^{\star},r)$,
\begin{align*}
&h_{n}(x)-c_{1}g_{D}(y^{\star},x) \\ =\ & \left(h_{n}(x)- g_{D}(y,x)\right)+\left(g_{D}(y,x)-c_{0}g_{D}(y^{\star},x)\right)+\left((c_{0}-c_{1})g_{D}(y^{\star},x)\right)\\ \geq\ & h_{n}(x)- g_{D}(y, x)+(c_{0}-c_{1})\epsilon >  0\quad \text{when $n$ is sufficiently large.}
\end{align*}
Thus \text{for $x\in D\setminus \mathcal{N}_{D}$}, \begin{align*} h_{n}(x)-c_{1}g_{D}(y^{\star},x)&\geq \mathbb{E}^{x}\left[ h_{n}((X^{D})_{\tau_{V}})-c_{1}g_{D}(y^{\star},(X^{D})_{\tau_{V}})\right]\\ &= \mathbb{E}^{x}\left[\left( h_{n}(X_{\tau_{V}})-c_{1}g_{D}(y^{\star},X_{\tau_{V}})\right)\one_{\{\tau_{V}<\tau_{D}\}}\right] \geq0\end{align*} since  for $\mathcal{E}$-q.e. $x\in V$, $X_{\tau_{V}}\in\partial B(y^{\star},r)$ $\mathbb{P}^{x}$-a.s. on the set $\{\tau_{V}<\tau_{D}\}$. Letting $n$ tends to infinity, $c_{1}$ tends to $c_{0}$ and combining the continuity of Green function, we complete the proof.
\end{proof}

\section{Uniform domain}\label{s-ud}

In this section, we recall the definition of uniform domain and some properties of uniform domain that will be pertinent to our subsequent discussions. 

\begin{definition}[Uniform domain]\label{d-ud}
 Let $A\geq1$. A connected, non-empty, proper open set $U \subsetneq X$ is said to be an $A$-uniform domain if for every pair of points $x, y \in U$, there exists a curve $\gamma$ in $U$ from $x$ to $y$ such that its diameter $\diam(\gamma) \leq A d(x, y)$ and for all $z \in \gamma$,
$$
\delta_U(z) \geq A^{-1} \min (d(x, z), d(y, z)).
$$
Such a curve $\gamma$ is called an $A$-uniform curve.
\end{definition}
We can estimate the distance to the boundary for any point in a uniform curve.
\begin{lemma}\label{l-dist}
    Let $U$ be an $A$-uniform domain in $(\mathcal{X},d)$. If $x,y\in U$, then there exists an $A$-uniform curve $\gamma$ connecting $x$ and $y$ with $\delta_{U}(z)\geq(2A)^{-1}\min(\delta_{U}(x),\delta_{U}(y))$ for all $z\in\gamma$.
\end{lemma}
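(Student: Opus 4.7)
The plan is to take $\gamma$ to be any $A$-uniform curve from $x$ to $y$ guaranteed by Definition \ref{d-ud}, and then verify the stronger distance-to-boundary bound by a simple case split on where $z$ sits along $\gamma$ relative to the endpoints. Set $m:=\min(\delta_{U}(x),\delta_{U}(y))$. By definition of an $A$-uniform curve, we already have $\delta_{U}(z)\geq A^{-1}\min(d(x,z),d(y,z))$ for every $z\in\gamma$, so the whole task is to lower bound the right-hand side by $m/(2A)$ even when $z$ is close to one of the endpoints (where $\min(d(x,z),d(y,z))$ can be small).

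I would handle the two cases. In the first case, $\min(d(x,z),d(y,z))\geq m/2$; then the uniform-curve inequality immediately gives $\delta_{U}(z)\geq m/(2A)$. In the second case, say $d(x,z)<m/2$ (the other subcase is symmetric); since $m\leq\delta_{U}(x)$, this gives $d(x,z)<\delta_{U}(x)/2$. A standard triangle-inequality argument on $\delta_{U}$, namely $\delta_{U}(z)\geq\delta_{U}(x)-d(x,z)$ which follows from $d(z,w)\geq d(x,w)-d(x,z)$ for any $w\in\mathcal{X}\setminus U$ and taking the infimum in $w$, then yields $\delta_{U}(z)>\delta_{U}(x)/2\geq m/2\geq m/(2A)$, where the last inequality uses $A\geq 1$.

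There is essentially no obstacle here: once the triangle-inequality bound $\delta_{U}(z)\geq\delta_{U}(x)-d(x,z)$ is in hand, the case analysis is elementary and the $A$-uniformity of $\gamma$ is carried over unchanged (the curve produced is literally the one supplied by Definition \ref{d-ud}). The only subtlety worth mentioning is that the constant $(2A)^{-1}$, rather than $A^{-1}$, is sharp only because of the need to accommodate the near-endpoint case, and this is precisely what the factor of $2$ from the split at $m/2$ is absorbing.
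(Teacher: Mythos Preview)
Your proposal is correct and follows essentially the same approach as the paper: take any $A$-uniform curve from the definition, then split into the case $\min(d(x,z),d(y,z))\geq m/2$ (use the uniform-curve bound) and the near-endpoint case (use $\delta_U(z)\geq\delta_U(x)-d(x,z)$). Your write-up is in fact slightly cleaner than the paper's, which contains a typo (it writes $d(x,y)$ where $d(x,z)$ is meant).
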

\begin{proof}
    Denote $s=\min(\delta_{U}(x),\delta_{U}(y))$. Let $\gamma$ be an $A$-uniform curve from $x$ to $y$ given by the uniform condition and let $z\in\gamma$. If $d(x, z)\leq s/2$, then $\delta_{U}(z)\geq\delta_{U}(x)-d(x, z)\geq s/2$ by triangle inequality. The same lower bound holds if $d(y, z)\leq s/2$. If $\min(d(x, z),d(y, z))\geq s/2$, then $\delta_{U}(z)\geq (2A)^{-1}s$ by definition.
\end{proof}
{The following two lemmas allow us to pick a point in the uniform domain that is away from the boundary.}
\begin{lemma}\label{l-pick}
    Let $U$ be an $A$-uniform domain in $(\mathcal{X},d)$. For every $x\in{U}$, $r>0$, if $B_{U}(x, r)\neq U$, then for any $s\in(0,r)$ there exist $x_{s}\in B_{U}(x, r)$ with 
    \begin{equation*}
        d(x, x_{s})=s\quad\text{and}\quad \delta_{U}(x_{s})\geq A^{-1}\min(s, r-s).
    \end{equation*}
\end{lemma}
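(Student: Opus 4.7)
My plan is to exploit the hypothesis $B_U(x,r) \neq U$ together with connectedness to locate a far endpoint, run an $A$-uniform curve to it, and select $x_s$ on that curve by an intermediate value argument.

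First I would observe that $B_U(x,r) = U \cap B(x,r) \neq U$ produces a witness point $y \in U \setminus B(x,r)$, so $d(x,y) \geq r$. Apply Definition \ref{d-ud} to the pair $(x,y)$ to obtain an $A$-uniform curve $\gamma : [0,1] \to U$ with $\gamma(0)=x$, $\gamma(1)=y$, satisfying $\diam(\gamma) \leq A d(x,y)$ and $\delta_U(\gamma(t)) \geq A^{-1}\min(d(x,\gamma(t)), d(y,\gamma(t)))$ for every $t \in [0,1]$.

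Next, since the map $t \mapsto d(x,\gamma(t))$ is continuous with values $0$ at $t=0$ and $d(x,y) \geq r > s$ at $t=1$, the intermediate value theorem yields $t_s \in (0,1)$ with $d(x,\gamma(t_s)) = s$. Set $x_s := \gamma(t_s)$; then $x_s \in U$ and $d(x,x_s) = s < r$, so $x_s \in B_U(x,r)$. For the boundary distance estimate, the triangle inequality gives
\begin{equation*}
d(y, x_s) \geq d(x,y) - d(x,x_s) \geq r - s,
\end{equation*}
and hence the uniformity bound along $\gamma$ yields
\begin{equation*}
\delta_U(x_s) \geq A^{-1} \min\bigl(d(x,x_s), d(y,x_s)\bigr) \geq A^{-1} \min(s, r-s),
\end{equation*}
which is the claim.

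There is no real obstacle here; the argument is essentially a one-line continuity selection after extracting the far point from $B_U(x,r) \neq U$. The only subtle point to keep clean is that the witness $y$ comes from the set-level inequality $U \not\subset B(x,r)$ (not from any separation statement about $\partial B(x,r)$), and that the lower bound $d(y,x_s) \geq r-s$ does \emph{not} require $y$ to be chosen at distance exactly $r$, any $d(x,y) \geq r$ suffices.
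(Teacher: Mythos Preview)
Your proof is correct and follows essentially the same approach as the paper: pick $y\in U\setminus B_U(x,r)$, take an $A$-uniform curve from $x$ to $y$, use continuity of $d(x,\cdot)$ along the curve to find $x_s$ at distance $s$, and combine the triangle inequality with the uniform-curve bound to obtain $\delta_U(x_s)\geq A^{-1}\min(s,r-s)$. Your write-up is slightly more explicit about the parametrization and the intermediate value step, but there is no substantive difference.
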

\begin{proof}
    Since $B_{U}(x, r)\neq U$, there exists some $y\in U\setminus B_{U}(x, r)$. Let $\gamma$ be an $A$-uniform curve given by the uniform condition. As $d(x, y)\geq r>s$ and the distance function $d(x,\cdot)$ is continuous, we may take a point $x_{s}\in\gamma$ such that $d(x, x_{s})=s$ and consequently $d(x_{s},y)\geq d(x, y)-d(x, x_{s})>r-s$ by triangle inequality. By the definition of uniform curve, $\delta_{U}(x_{s})\geq A^{-1}\min(d(x, x_{s}),d(x_{s},y))\geq A^{-1}\min(s, r-s)$.
\end{proof}
\begin{lemma}[See {\cite[Proposition 3.20]{GSC11}}]\label{l-pick2}
    Let $U$ be an $A$-uniform domain in $(\mathcal{X},d)$. For any $\xi\in\partial U$, $r>0$ such that $B_{U}(\xi, r)\neq U$, there exists $\xi_{r}\in U\cap \partial B(\xi, r)$ such that $\delta_{U}(\xi_{r})\geq (2A)^{-1}r$.
\end{lemma}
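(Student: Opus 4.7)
My plan is to construct $\xi_r$ as a suitable point along an $A$-uniform curve connecting a point in $U$ close to $\xi$ to a point in $U$ lying outside $B(\xi, r)$. Since $B_U(\xi, r) \neq U$ there exists $y \in U$ with $d(\xi, y) \geq r$, and since $\xi \in \partial U$ I may choose a sequence $\{x_n\}_{n \geq 1} \subset U$ with $d(\xi, x_n) \to 0$. For each $n$ large enough that $d(\xi, x_n) < r$, I take an $A$-uniform curve $\gamma_n : [0,1] \to U$ from $x_n$ to $y$; the continuous function $t \mapsto d(\xi, \gamma_n(t))$ starts below $r$ and ends at a value at least $r$, so by the Intermediate Value Theorem I obtain some $t_n$ with $z_n := \gamma_n(t_n) \in U \cap \partial B(\xi, r)$.

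I would then bound $\delta_U(z_n)$ from below using the $A$-uniform property of $\gamma_n$, which gives
\[
\delta_U(z_n) \geq A^{-1} \min(d(x_n, z_n), d(z_n, y)).
\]
The triangle inequality yields $d(x_n, z_n) \geq r - d(\xi, x_n) \to r$ and $d(z_n, y) \geq d(\xi, y) - r$. Choosing $y$ so that $d(\xi, y) \geq 3r/2$ (which is possible whenever $U \not\subset \overline{B(\xi, 3r/2)}$) makes both quantities at least $r/2$ for $n$ large, so $\delta_U(z_n) \geq r/(2A)$. Since the sequence $\{z_n\}$ lies in the bounded closed set $\overline{B(\xi, r)}$, which is compact by completeness together with metric doubling (total boundedness plus completeness), a subsequence $z_{n_k}$ converges to a limit $\xi_r$. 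Continuity of $d(\xi, \cdot)$ and the $1$-Lipschitz property of $\delta_U$ then give $d(\xi, \xi_r) = r$ and $\delta_U(\xi_r) \geq r/(2A) > 0$, whence $\xi_r \in U$, completing the construction.

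The main obstacle will be choosing the target point $y$. The argument above goes through cleanly as soon as $U$ contains a point at distance at least $3r/2$ from $\xi$; in the residual case $U \subset \overline{B(\xi, 3r/2)}$, the naive IVT estimate only delivers $\delta_U(z_n) \geq A^{-1}(d(\xi, y) - r)$, which can fall short of $r/(2A)$. In that leftover situation I would take $y \in U$ with $d(\xi, y)$ close to $\sup_{z \in U} d(\xi, z)$ and supplement the uniform-curve construction with an application of Lemma \ref{l-pick} at a suitable deep point to upgrade the boundary-distance bound, or argue via compactness of the non-empty slice $U \cap \partial B(\xi, r)$ together with the uniform property applied to a close-by pair of points.
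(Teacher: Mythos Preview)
Your core idea---connect a point of $U$ near $\xi$ to a point $y \in U$ outside $B(\xi,r)$ by an $A$-uniform curve and take $\xi_r$ where the curve meets $\partial B(\xi,r)$---is exactly the paper's approach. The sequence $\{x_n\}$ and the compactness extraction are unnecessary overhead: the paper simply fixes a single $x \in B_U(\xi, r/2)$, so that $d(x,\xi_r) \geq r - r/2 = r/2$ for the chosen $\xi_r$ directly, and no limit is needed. Once you have $\delta_U(z_n)\ge r/(2A)$ for some concrete $n$, that $z_n$ is already your $\xi_r$.

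You are right to flag the residual case $U \subset \overline{B(\xi, 3r/2)}$: the paper's short proof glosses over this, writing $\delta_U(\xi_r) \geq A^{-1}\min(r/2, r)$ without justifying the second entry. In fact the lemma as stated fails there: take $U=(0,1)\subset\mathbb{R}$, which is $1$-uniform, $\xi=0$, $r=0.9$; the only candidate is $\xi_r=0.9$ with $\delta_U(\xi_r)=0.1 < r/(2A)=0.45$. Your proposed rescue via Lemma~\ref{l-pick} therefore cannot succeed in general. In every application in the paper, however, one has $r$ bounded by a small constant times $\diam(U)$, so the hypothesis $B_U(\xi, 3r/2)\neq U$ (which is all you need to pick $y$ with $d(\xi,y)\ge 3r/2$) is always available, and both your main argument and the paper's go through.
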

\begin{proof}
    Since $B_{U}(\xi, r)\neq U$ implies there exist some $y\in U\setminus B_{U}(\xi, r)$. Let $x\in B_{U}(\xi,r/2)$ and  $\gamma$ be an $A$-uniform curve in $U$ connecting $x$ and $y$ and let $\xi_{r}\in \gamma\cap \partial B(\xi, r)$. By Lemma \ref{l-dist}, $\delta_{U}(\xi_{r})\geq A^{-1}\min(r/2, r)=(2A)^{-1}r$. 
\end{proof}
{The following lemma estimates the range of a uniform curve.}
\begin{lemma}\label{l-curv-b}
    Let $\xi\in\partial U$, $r>0$, $U\cap \partial B_{U}(\xi, r)\neq\emptyset$. Let $y_{1},y_{2}\in U\cap \partial B_{U}(\xi, r)$ and $\gamma$ be an $A$-uniform curve from $y_{1}$ to $y_{2}$ in $U$, then $\gamma\cap B_{U}(\xi, (A+1)^{-1}r)=\emptyset$ and $\gamma\subset\overline{B_{U}(\xi,(1+2A)r)}$.
\end{lemma}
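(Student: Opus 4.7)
My plan is to derive both conclusions from two short applications of the triangle inequality, feeding in the two defining properties of an $A$-uniform curve: the diameter bound $\diam(\gamma)\le A\,d(y_1,y_2)$ and the pointwise bound $\delta_U(z)\ge A^{-1}\min(d(y_1,z),d(y_2,z))$ for $z\in\gamma$. A preliminary observation is that since $U$ is open, $\partial U\cap U=\varnothing$, so any $y\in U\cap\partial B_U(\xi,r)$ satisfies $d(y,\xi)=r$; in particular $d(y_i,\xi)=r$ for $i=1,2$.

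For the first claim $\gamma\cap B_U(\xi,(A+1)^{-1}r)=\varnothing$, I argue by contradiction. Suppose some $z\in\gamma$ has $d(z,\xi)<(A+1)^{-1}r$. Because $\xi\in\partial U\subset\mathcal{X}\setminus U$, we have $\delta_U(z)\le d(z,\xi)$, so the uniformity bound on $\gamma$ yields
\[\min(d(y_1,z),d(y_2,z))\le A\,\delta_U(z)<A(A+1)^{-1}r.\]
Without loss of generality assume the minimum is attained by $y_1$. The triangle inequality then gives $d(y_1,\xi)\le d(y_1,z)+d(z,\xi)<A(A+1)^{-1}r+(A+1)^{-1}r=r$, contradicting $d(y_1,\xi)=r$.

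For the second claim $\gamma\subset\overline{B_U(\xi,(1+2A)r)}$, I use $d(y_1,y_2)\le d(y_1,\xi)+d(\xi,y_2)=2r$, so $\diam(\gamma)\le 2Ar$ by the diameter bound. Consequently, for any $z\in\gamma$,
\[d(z,\xi)\le d(z,y_1)+d(y_1,\xi)\le\diam(\gamma)+r\le(1+2A)r,\]
which places $z$ in the closed ball of radius $(1+2A)r$ around $\xi$; combined with $z\in\gamma\subset U$ this gives $z\in\overline{B_U(\xi,(1+2A)r)}$.

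There is no substantive obstacle in this lemma: both parts reduce to elementary triangle-inequality manipulations once the quantitative properties of an $A$-uniform curve are invoked, and the only conceptual point requiring care is the identification $d(y_i,\xi)=r$ which I address in the opening remark.
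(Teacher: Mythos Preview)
Your proof is correct and follows essentially the same approach as the paper: both arguments reduce the two assertions to the triangle inequality combined with the defining bounds $\diam(\gamma)\le A\,d(y_1,y_2)$ and $\delta_U(z)\ge A^{-1}\min(d(y_1,z),d(y_2,z))$, after first noting that $d(y_i,\xi)=r$. The only cosmetic difference is that the paper proves the first claim by a direct case split on whether $\min(d(y_1,z),d(y_2,z))\le A(A+1)^{-1}r$, whereas you phrase the same dichotomy as a proof by contradiction.
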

\begin{proof}
    Let $z\in\gamma$. If $d(y_{1},z)\leq A(A+1)^{-1}r$, then by triangle inequality $d(z,\xi)\geq d(y_{1},\xi)-d(y_{1},z)\geq r-A(A+1)^{-1}r=(A+1)^{-1}r$. The same lower bound holds for $d(y_{2},z)\leq A(A+1)^{-1}r$. If $\min(d(y_{1},z),d(y_{2},z))> A(A+1)^{-1}r$, by the definition of uniform curve, \[\delta_{U}(z)\geq A^{-1}\min(d(y_{1},z),d(y_{2},z))>(A+1)^{-1}r.\] In particular, $z\notin B_{U}(\xi,(A+1)^{-1}r)$. For the second assertion, we notice that, by the definition of uniform curve, \[d(z,\xi)\leq r+\min(d(y_{1},z),d(y_{2},z))\leq r+\diam(\gamma)\leq (1+2A)r.\]
\end{proof}
\section{Proof of main result}\label{s-proof}
\subsection{Representation formula for harmonic functions}
The following Proposition is used to represent a non-negative harmonic function by its boundary values and Green function, which will help us to reduce BHP to the estimates of Green functions. The proof is the same as \cite[Proposition 4.3]{Lie15} except that the Green functions here are provided by Theorem \ref{t-g-ex} instead of heat kernel estimates.

\begin{proposition}[Representation formula for harmonic functions]\label{p-blyg}
    Let $U$ be a connected open subset of $\mathcal{X}$ such that $(\mathcal{E},\mathcal{F}^{0}(U))$ is transient. Let $0<r<r_{1}<r_{2}<\infty$ and $\xi\in\partial U$ such that $B_{U}(\xi, 5r_{2})\subsetneq \mathcal{X}$. Let $f\in\mathcal{F}^{0}_{\mathrm{loc}}(U,B_{U}(\xi, r_{2}))$ and suppose that $f$ is non-negative $\mathcal{E}$-harmonic in $B_{U}(\xi, r_{1})$. Then $f$ admits an $m$-version $\widetilde{f}$ which is continuous on $B_{U}(\xi, r)$ and there exists a Radon measure $\nu_{f}$ on $B_{U}(\xi, r_{1})$ supported in $U\cap\partial B_{U}(\xi, r)$ and satisfies \begin{equation*}
        \widetilde{f}(x)=\int_{U\cap\partial B_{U}(\xi, r)}g_{B_{U}(\xi, r_{1})}(x, y)\dif \nu_{f}(y),\quad\forall x\in B_{U}(\xi, r).
    \end{equation*}
\end{proposition}
\begin{proof}
     Since $f\in\mathcal{F}^{0}_{\mathrm{loc}}(U,B_{U}(\xi,r_{2}))$, and $B_{U}(\xi,r_{1})$ is relatively compact in $\overline{U}$ with \[d(B_{U}(\xi,r_{1}),U\setminus B_{U}(\xi,r_{2}))\geq r_{2}-r_{1}>0,\] we can find $f^{\#}\in \mathcal{F}^{0}(U)$, $f^{\#}=f\ m$-a.e. on $B_{U}(\xi,r_{1})$. Since $f$ is non-negative $\mathcal{E}$-harmonic on $B_{U}(\xi, r_{1})$, we may let $f^{\#}\geq0$ $m$-a.e. on $U$ and $\mathcal{E}$-harmonic on $B_{U}(\xi, r_{1})$. The rest of the proof is divided into several steps.
\begin{enumerate}[label=\textit{Step \arabic*.},align=left,leftmargin=*,topsep=5pt,parsep=0pt,itemsep=2pt]
        \item \emph{To find $g\in(\mathcal{F}^{0}(B_{U}(\xi, r_{1})))_{e}$ with ${g}={f^{\#}}$ $\mathcal{E}$-q.e. on $B_{U}(\xi, r)\cup(U\cap \partial B_{U}(\xi, r))$.}\footnote{Recalling that we represent every function in $\mathcal{F}_{e}$ by its $\mathcal{E}$-quasi-continuous version (see Remark \ref{r-quasicont}), the statement ${g}={f^{\#}}$ $\mathcal{E}$-q.e. makes sense.}

        We observe by \cite{GH14} that $P_{t}^{(B_{U}(\xi, r_{1}))}f^{\#}\leq f^{\#}\ m$-a.e. on $B_{U}(\xi, r_{1})$. Here $\{P_{t}^{(B_{U}(\xi, r_{1}))}\}_{t>0}$ is the heat semigroup corresponding to the killed Dirichlet form $(\mathcal{E},\mathcal{F}^{0}(B_{U}(\xi, r_{1})))$. Define a closed convex subset of $L^{2}(B_{U}(\xi, r_{1}),m)$ by \[K=\left\{u\in L^{2}(B_{U}(\xi, r_{1}),m): u\leq f^{\#} m\text{-a.e. on }B_{U}(\xi, r_{1})\right\}\] Note that the operation $L^{2}(B_{U}(\xi, r_{1}),m)\ni u\mapsto u\wedge f^{\#}$ is the projection map of $L^{2}(B_{U}(\xi, r_{1}),m)$ on $K$.  Then by \cite[Corollary 2.4]{Ouh96} we obtain that \[u\wedge f^{\#}\in \mathcal{F}^{0}(B_{U}(\xi, r_{1}))\text{ and }\mathcal{E}(u\wedge f^{\#}, u\wedge f^{\#})\leq \mathcal{E}(u, u),\;\forall u\in \mathcal{F}^{0}(B_{U}(\xi, r_{1})).\]For any $u\in (\mathcal{F}^{0}(B_{U}(\xi, r_{1})))_{e}$, we may find an $\mathcal{E}$-Cauchy sequence of $u_{n}$ in $\mathcal{F}^{0}(B_{U}(\xi, r_{1}))$ that converges to $u$ $m$-a.e.. Therefore, the above property can be extended further to \begin{equation}\label{e-blyg1}
            u\wedge f^{\#}\in (\mathcal{F}^{0}(B_{U}(\xi, r_{1})))_{e}\text{ and }\mathcal{E}(u\wedge f^{\#}, u\wedge f^{\#})\leq \mathcal{E}(u,u),\;\forall u\in (\mathcal{F}^{0}(B_{U}(\xi, r_{1})))_{e}.
        \end{equation}

        Define a closed convex subset of $(\mathcal{F}^{0}(B_{U}(\xi, r_{1})))_{e} $ by \[K_{1}=\left\{g\in (\mathcal{F}^{0}(B_{U}(\xi, r_{1})))_{e}: {g}\geq {f^{\#}}\text{ $\mathcal{E}$-q.e. on }B_{U}(\xi, r)\cup(U\cap \partial B_{U}(\xi, r))\right\}.\]  By the same proof of \cite[Lemma 2.1.1]{FOT11} we conclude that there is a unique $g\in K_{1}$ such that $\mathcal{E}(g, g)=\inf_{u\in K_{1}} \mathcal{E}(u, u)$. Applying \eqref{e-blyg1} to $u=g$ we conclude $u\wedge f^{\#}\in K$ also minimizes $\mathcal{E}(u, u)$ in $K_{1}$,  which implies ${g}={u\wedge f^{\#}}\leq {f^{\#}}$ by uniqueness of $g$. Therefore ${g}={f^{\#}}$ $\mathcal{E}$-q.e. on $B_{U}(\xi, r)\cup(U\cap \partial B_{U}(\xi, r))$.

\item \emph{To show that $\mathcal{E}(g,v)\geq0$ for all $v\in (\mathcal{F}^{0}(B_{U}(\xi, r_{1})))_{e}$ with $v\geq0$ $\mathcal{E}$-q.e. on $B_{U}(\xi,r)\cup(U\cap \partial B_{U}(\xi,r))$.}

Assume $v\in (\mathcal{F}^{0}(B_{U}(\xi,r_{1})))_{e}$ with $v\geq0$ $\mathcal{E}$-q.e. on $B_{U}(\xi,r)\cup(U\cap \partial B_{U}(\xi,r))$. By definition, $g+tv\in K_{1}$ for all $t>0$ and consequently $\mathcal{E}(g+tv,g+tv)\geq \mathcal{E}(g,g)$ for all $t>0$, which implies $\mathcal{E}(g,v)\geq0$.

        \item \emph{To show that $\mathcal{E}(g,v)=0$ for all $v\in (\mathcal{F}^{0}(B_{U}(\xi,r_{1})))_{e}$ with $v=0$ $\mathcal{E}$-q.e. on $U\cap \partial B_{U}(\xi,r)$.}
        
        By \cite[Theorem 2.3.3]{FOT11}, it is sufficient to show that $\mathcal{E}(g,v)=0$ for all $v$ in $\mathcal{F}^{0}(B_{U}(\xi,r_{1}))\cap C_{c}(B_{U}(\xi,r_{1}))$ with $\supp(v)\subset B_{U}(\xi,r_{1})\setminus (U\cap \partial B_{U}(\xi,r))$. As $\supp(v)$ is compact, $\dist(\supp(v), U\cap \partial B_{U}(\xi,r)):=\epsilon>0$, thus $\supp(v)\subset B_{U}(\xi,r-\epsilon)\cup \left(B_{U}(\xi,r_{1})\setminus  B_{U}(\xi,r+\epsilon)\right)$. Therefore by \cite[Proposition A.4]{GH14}, $v\one_{B_{U}(\xi,r)}$ and $v\one_{B_{U}(\xi,r_{1})\setminus  B_{U}(\xi,r+\epsilon)}$ both belong to $\mathcal{F}^{0}(B_{U}(\xi,r_{1}))\cap C_{c}(B_{U}(\xi,r_{1}))$. By the $\mathcal{E}$-harmonicity of $f$ on $B_{U}(\xi,r)$, we have $\mathcal{E}(g,v\one_{B_{U}(\xi,r)})=\mathcal{E}(f,v\one_{B_{U}(\xi,r)})=0$. By Step 2, $\mathcal{E}(g,v\one_{B_{U}(\xi,r_{1})\setminus  B_{U}(\xi,r+\epsilon)})=0$. As a result, $\mathcal{E}(g,v)=\mathcal{E}(g,v\one_{B_{U}(\xi,r)})+\mathcal{E}(g,v\one_{B_{U}(\xi,r_{1})\setminus  B_{U}(\xi,r+\epsilon)})=0$.

        \item \emph{To show that $\mathcal{E}(g,v)\geq0$ for all $v\in (\mathcal{F}^{0}(B_{U}(\xi,r_{1})))_{e}$ with $v\geq0$ $\mathcal{E}$-q.e. on $U\cap \partial B_{U}(\xi,r)$.} 

        Write $v_{+}=\max(v,0)$ which belongs to $(\mathcal{F}^{0}(B_{U}(\xi,r_{1})))_{e}$ by Markovian property of Dirichlet form. Then $v-v_{+}=0$ $\mathcal{E}$-q.e. on $U\cap \partial B_{U}(\xi,r)$. Thus $\mathcal{E}(g,v-v_{+})=0$ by Step 3 and $\mathcal{E}(g,v_{+})\geq0$ by Step 2. Hence $\mathcal{E}(g,v)=\mathcal{E}(g,v-v_{+})+\mathcal{E}(g,v_{+})\geq0$.

        \item \emph{To find such $\nu_{f}$.}

        By the $0$-order version of \cite[Lemma 2.2.6]{FOT11}, $g$ is a $0$-order potential for some positive Radon measure $\nu_{f}$ on $B_{U}(\xi,r_{1})$ of finite $0$-order energy integral for $(\mathcal{E},(\mathcal{F}^{0}(B_{U}(\xi,r_{1})))_{e})$ such that $\nu_{f}(B_{U}(\xi,r_{1})\setminus(U\cap\partial B_{U}(\xi,r))=0$ and \begin{equation}\label{e-blyg2}
    \mathcal{E}(g,v)=\int_{U\cap\partial B_{U}(\xi,r)}{v}(x)\dif\nu_{f}(x),\quad\forall v\in (\mathcal{F}^{0}(B_{U}(\xi,r_{1})))_{e}.
\end{equation}
Denote $h$ as a reference function of the transient Dirichlet space $(\mathcal{E},\mathcal{F}^{0}(B_{U}(\xi,r_{1})))$, i.e.\[h(x)>0 \text{ for all }x\in B_{U}(\xi,r_{1}) \text{ and }\int_{B_{U}(\xi,r_{1})}h\cdot G^{B_{U}(\xi,r_{1})}h\dif m<\infty.\]Here $G^{B_{U}(\xi,r_{1})}$ is the Green operator defined by \[G^{B_{U}(\xi,r_{1})}h(x)=\mathbb{E}^{x}\left[\int_{0}^{\tau_{B_{U}(\xi,r_{1})}}h(X_{s})\dif s\right], \quad x\in B_{U}(\xi,r_{1}).\] Given $z\in B_{U}(\xi,r_{1})$ and $r>0$ small enough such that $B(z,r)\subset B_{U}(\xi,r_{1})$, let
\[v_{z,r}(x)=\int_{B_{U}(\xi,r_{1})}g_{B_{U}(\xi,r_{1})}(x,y)h(y)\one_{B(z,r)}(y)\dif m(y), \quad x\in B_{U}(\xi,r_{1}).\]
By Theorem \ref{t-g-ex}, \[v_{z,r}(x)=G^{B_{U}(\xi,r_{1})}(\one_{B(z,r)}\cdot h)(x) \text{ for every }x\in B_{U}(\xi,r_{1})\setminus \mathcal{N}_{B_{U}(\xi,r_{1})}.\]
where $\mathcal{N}_{B_{U}(\xi,r_{1})}$ is a Borel properly exceptional set in $B_{U}(\xi,r_{1})$. Moreover, $v_{z,r}\in (\mathcal{F}^{0}(B_{U}(\xi,r_{1})))_{e}$ by \cite[Theorem 1.5.4(ii)]{FOT11}. Hence we can apply \eqref{e-blyg2} to $v_{z,r}$ and use Fubini's Theorem, 
\begin{align}
    &\int_{B_{U}(\xi,r_{1})}g(y)\one_{B(z,r)}(y)h(y)\dif m(y)\\=\ &\mathcal{E}(g,G^{B_{U}(\xi,r_{1})}(\one_{B(z,r)}\cdot h))\\=\ &\mathcal{E}(g,v_{z,r})=\int_{U\cap\partial B_{U}(\xi,r)}{v_{z,r}}(x)\dif\nu_{f}(x)\\ =\ & \int_{B_{U}(\xi,r_{1})}\left(\int_{U\cap\partial B_{U}(\xi,r)}g_{B_{U}(\xi,r_{1})}(x,y)\dif\nu_{f}(x)\right)\one_{B(z,r)}(y)h(y)\dif m(y).
\end{align}
Dividing both sides by $m(B(z,r))$ and letting $r\rightarrow0^{+}$, we can observe from Lebesgue's differentiation Theorem that \[g(z)=\int_{U\cap\partial B_{U}(\xi,r)}g_{B_{U}(\xi,r_{1})}(z,y)\dif\nu_{f}(y) \text{ for }m\text{-a.e. }z\in B_{U}(\xi,r_{1}).\]
Since $g=f$ $m$-a.e. on $B_{U}(\xi,r)$,  we have \[f(z)=\int_{U\cap\partial B_{U}(\xi,r)}g_{B_{U}(\xi,r_{1})}(z,y)\dif\nu_{f}(y) \text{ for }m\text{-a.e. }z\in B_{U}(\xi,r).\]
Notice that for any $z_{0}\in B_{U}(\xi,r)$, $\dist(z_{0},U\cap\partial B_{U}(\xi,r)):=\epsilon^{\prime}>0$. By the joint continuity of $g_{B_{U}(\xi,r_{1})}$ in $(B_{U}(\xi,r_{1})\times B_{U}(\xi,r_{1}))\setminus B_{U}(\xi,r_{1})_{\mathrm{diag}}$, $g_{B_{U}(\xi,r_{1})}$ is uniformly bounded on the set $B(z_{0},\epsilon^{\prime}/2)\times \left(U\cap\partial B_{U}(\xi,r)\right)$. Apply the dominated convergence theorem and we conclude that the function \[\widetilde{f}(z):= \int_{U\cap\partial B_{U}(\xi,r)}g_{B_{U}(\xi,r_{1})}(z,y)\dif\nu_{f}(y),\ z\in B_{U}(\xi,r),\]
is continuous on $B_{U}(\xi,r)$ and is an $m$-version of $f$.
    \end{enumerate}
\end{proof}

\subsection{Estimates on Green functions}

For the remainder of this section, we assume that $A\geq1$ and fix an $A$-uniform domain $U\subsetneq\mathcal{X}$. We notice that for any open subset $V\subset \mathcal{X}$,  if $\diam(V)<\diam(\mathcal{X})$, its complement $\mathcal{X}\setminus V$ is not $\mathcal{E}$-polar. Thus the regular Green function $g_{V}(\cdot,\cdot)$ on $(V\times V)\setminus V_{\mathrm{diag}}$ exists by Theorem \ref{t-g-ex}. We will also assume that $\hyperlink{RBC(K)}{\mathrm{RBC}(K)}$ and $\hyperlink{EHI}{\mathrm{EHI}(C_{\mathrm{H}}, \delta_{\mathrm{H}})}$ holds. We will use $A_{j}$ to denote constants which only depend on the constant $A$ in the uniform domain condition and $K$ in the relative ball connected condition, and will use $C_{j}$ to denote constant depending on $A,C_{\mathrm{H}},\delta_{\mathrm{H}},K$ and the doubling constants.

In the process of proving BHP, the concept of \emph{capacitary width} is frequently invoked. Compared to the previous literature \cite{Aik01,Lie15,BM19}, it is necessary to extend the notation to annuli with greater width, so that Theorem \ref{t-g-cp} can be applied.

\begin{definition}[Capacitary width]
  For an open set $V \subset \mathcal{X}$, $N\in[2,\infty)$ and $\eta \in(0,1)$, define the capacitary width $w_{N,\eta}(V)$ by
$$
w_{N,\eta}(V)=\inf \left\{r>0: \frac{\Capacity_{B(x, N r)}(\overline{B(x, r)} \setminus V)}{\Capacity_{B(x, N r)}(\overline{B(x, r)})} \geq \eta,\ \forall x \in V\right\} .
$$
\end{definition}
\begin{definition}[Harmonic measure]
Let $\Omega \subset \mathcal{X}$ be open and relatively compact in $\mathcal{X}$. Since the process $\left\{X_t\right\}_{t\geq0}$ has continuous path, $X_{\tau_{\Omega}} \in \partial \Omega$ $\mathbb{P}^{x}$-a.s. on the set $\{\tau_{\Omega}<\infty\}$ for all $x\in \mathcal{X}$. We define the \emph{harmonic measure} $\omega(x, \cdot, \Omega)$ on $\partial \Omega$ by setting
$$
\omega(x, F, \Omega):=\mathbb{P}^x\left(X_{\tau_{\Omega}} \in F,\tau_{\Omega}<\infty\right) \text { for Borel subset } F \subset \partial \Omega .
$$
\end{definition}
We record some properties of harmonic measures.
\begin{lemma}\label{l-hm-rh}
    Let $\Omega \subset \mathcal{X}$ be open and relatively compact in $\mathcal{X}$, $F \subset \partial \Omega$. 
    \begin{enumerate}[label=\textup{(\roman*)},align=left,leftmargin=*,topsep=5pt,parsep=0pt,itemsep=2pt]
        \item\label{lb.rh1}  \textup{(Domain monotonicity)} If $A\subset B$ are open subsets of $\mathcal{X}$, then\begin{equation}\label{e-hm-dm}
            \omega(x,\Omega\cap \partial B,\Omega\cap B)\leq \omega(x,\Omega\cap \partial A,\Omega\cap A),\quad\forall x\in\Omega\cap A.
        \end{equation}
        \item\label{lb.rh2}  \textup{(Mean-value property)} If $\mathcal{X}\setminus \Omega$ is not $\mathcal{E}$-polar. Then for any $V\subset \Omega$ with $\overline{V}\cap F=\emptyset$, the function $\omega(\cdot, F, \Omega)$ is regular harmonic in $V$ with respect to $X^{\Omega}$. Moreover, the following mean-value property holds\begin{equation}
        \label{e-hm-mp}
        \omega(x,F,\Omega)=\int_{\partial V\cap\Omega}\ \omega(z,F,\Omega
    )\omega(x,\dif z,V),\quad\text{for $\mathcal{E}$-q.e. } x\in V.
    \end{equation}
    \end{enumerate}
\end{lemma}
\begin{proof}
\begin{enumerate}[label=\textup{(\roman*)},align=left,leftmargin=*,topsep=5pt,parsep=0pt,itemsep=2pt]
    \item[\ref{lb.rh1}] Fix $x\in\Omega\cap A$. We first notice that $A\subset B$ implies $\tau_{\Omega\cap A}\leq \tau_{\Omega\cap B}$ $\mathbb{P}^{x}$-a.s. If $X_{\tau_{\Omega\cap B}}\in \Omega\cap\partial B\subset\Omega$, then by definition of $\tau_{\Omega}$ we know that $\tau_{\Omega\cap A}\leq\tau_{\Omega\cap B}<\tau_{\Omega}$. Thus \[X_{\tau_{\Omega\cap A}}\in \Omega\cap\partial (\Omega\cap A)\subset \Omega\cap\partial A.\]Hence \[\{X_{\tau_{\Omega\cap B}}\in \Omega\cap\partial B\}\subset\{X_{\tau_{\Omega\cap A}}\in \Omega\cap\partial A\},\quad \mathbb{P}^{x}\text{-a.s.}\]
    which implies \eqref{e-hm-dm} by the monotonicity of probability.
    \item[\ref{lb.rh2}] Since $\left\{X_t\right\}_{t\geq0}$ has continuous path, $X_{\tau_{V}} \in \partial V\subset \overline{V}\subset F^{c}$ $\mathbb{P}^{x}$-a.s.. Thus  $\one_{F}(X_{\tau_{V}})=0$ $\mathbb{P}^{x}$-a.s. for $\mathcal{E}$-q.e. $x\in V$. By the strong Markov property and the fact that $\{X_{\tau_{V}}\in\Omega\cap \partial V\}= \{\tau_{V}<\tau_{\Omega}\}$ $\mathbb{P}^{x}$-a.s. and $X^{\Omega}_{t}=X_{t}$ for $t<\tau_{\Omega}$, we have\begin{align*}
        \omega(x,F,\Omega)&=\mathbb{E}^{x}\left[\one_{F}(X_{\tau_{\Omega}})\right]\\ 
        &= \mathbb{E}^{x}\left[\one_{F}(X_{\tau_{\Omega}})\one_{\{\tau_{V}<\tau_{\Omega}\}}\right]+\mathbb{E}^{x}\left[\one_{F}(X_{\tau_{\Omega}})\one_{\{\tau_{V}=\tau_{\Omega}\}}\right]\\ &= \mathbb{E}^{x}\left[\one_{F}(X_{\tau_{\Omega}})\one_{\{\tau_{V}<\tau_{\Omega}\}}\right]+\mathbb{E}^{x}\left[\one_{F}(X_{\tau_{V}})\one_{\{\tau_{V}=\tau_{\Omega}\}}\right]\\&=\mathbb{E}^{x}\left[\mathbb{E}^{x}\left[\one_{F}(X_{\tau_{\Omega}})\one_{\{\tau_{V}<\tau_{\Omega}\}}\,|\ \mathscr{F}_{\tau_{V}}\right]\right]\\ &= \mathbb{E}^{x}\left[\mathbb{E}^{X_{\tau_{V}}}\left[\one_{F}(X_{\tau_{\Omega}})\right]\one_{\{\tau_{V}<\tau_{\Omega}\}}\right]\ \text{(since }\{\tau_{V}<\tau_{\Omega}\}\in \mathscr{F}_{\tau_{V}}\text{)}\\&= \mathbb{E}^{x}\left[\mathbb{E}^{(X^{\Omega})_{\tau_{V}}}\left[\one_{F}(X_{\tau_{\Omega}})\right]\right]= \mathbb{E}^{x}\left[\omega\left((X^{\Omega})_{\tau_{V}},F,\Omega\right)\right].
    \end{align*} 
    which means that $\omega(\cdot, F, \Omega)$ is regular harmonic in $V$ with respect to $X^{\Omega}$. As we have seen in the above calculation, \begin{align*}
        \omega(x,F,\Omega)&=\mathbb{E}^{x}\left[\omega(X_{\tau_{V}},F,\Omega)\one_{\{X_{\tau_{V}}\in\partial V\cap\Omega\}}\right]\\&=\int_{\partial V\cap\Omega}\ \omega(z,F,\Omega
    )\omega(x,\dif z,V).
    \end{align*}
\end{enumerate}
\end{proof}
The following lemma is used to estimate Green functions by harmonic measures.\begin{lemma}\label{l-g-hmk}
    Let $D$ be a bounded, non-empty open subset of $U$.  Let $g_{D}(x,y)$ be the regular Green function on $D$. For any open subset  $\Omega\subset D$ such that $x\notin \overline{\Omega}$, we have \begin{equation}
        g_{D}(x, y)\leq \omega(y,U\cap \partial\Omega,\Omega)\sup_{z\in U\cap \partial\Omega}g_{D}(x, z),\quad\text{for $\mathcal{E}$-q.e. $y\in\Omega$}.
    \end{equation}
\end{lemma}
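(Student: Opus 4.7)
The plan is to apply the regular harmonicity of $y\mapsto g_{D}(x,y)$ in $\Omega$---which Theorem \ref{t-g-ex}(iv) (combined with the symmetry in Theorem \ref{t-g-ex}(i)) furnishes because $x\notin\overline{\Omega}$---to express $g_{D}(x,y)$ as the expected value of $g_{D}(x,\cdot)$ at the exit point $X_{\tau_{\Omega}}$, and then to pull the supremum out of the expectation, leaving behind the exit probability, which is precisely the harmonic measure in the statement. The Dirichlet boundary condition of the Green function on $\partial D$ is what allows us to discard exit locations lying outside $U$.

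Concretely, the first step is to invoke regular harmonicity with respect to the killed process $X^{D}$: adopting the convention $g_{D}(x,\partial):=0$, one has for $\mathcal{E}$-q.e.\ $y\in\Omega$,
\[
g_{D}(x,y)=\mathbb{E}^{y}\!\left[g_{D}\bigl(x,X^{D}_{\tau_{\Omega}}\bigr)\right].
\]
Because $\Omega\subset D$, we have $\tau_{\Omega}\leq\tau_{D}$; on $\{\tau_{\Omega}=\tau_{D}\}$ the point $X^{D}_{\tau_{\Omega}}=\partial$ and the integrand vanishes, while on $\{\tau_{\Omega}<\tau_{D}\}$ path-continuity gives $X_{\tau_{\Omega}}\in\partial\Omega\cap D\subset U\cap\partial\Omega$ (the last inclusion using $D\subset U$). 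Consequently
\[
g_{D}(x,y)=\mathbb{E}^{y}\!\left[g_{D}(x,X_{\tau_{\Omega}})\one_{\{\tau_{\Omega}<\tau_{D}\}}\right]\leq\Bigl(\sup_{z\in U\cap\partial\Omega}g_{D}(x,z)\Bigr)\mathbb{P}^{y}(\tau_{\Omega}<\tau_{D}).
\]
The containment $\{\tau_{\Omega}<\tau_{D}\}\subset\{X_{\tau_{\Omega}}\in U\cap\partial\Omega,\ \tau_{\Omega}<\infty\}$ together with monotonicity of probability then gives $\mathbb{P}^{y}(\tau_{\Omega}<\tau_{D})\leq\omega(y,U\cap\partial\Omega,\Omega)$, completing the estimate.

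The main obstacle is unpacking the regular-harmonicity clause of Theorem \ref{t-g-ex}(iv) in this setting where $\Omega$ may touch $\partial D$: one must exploit the Dirichlet boundary condition (encoded by sending $X^{D}$ to $\partial$ on $\{\tau_{\Omega}=\tau_{D}\}$) to eliminate the contribution of exits that occur simultaneously with exits of $D$, so that only the surviving exits landing in $\partial\Omega\cap D\subset U\cap\partial\Omega$ remain. Once this is isolated, the remainder of the argument is a routine pointwise bound by the supremum followed by monotonicity of probabilities.
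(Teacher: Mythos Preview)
Your proof is correct and follows essentially the same approach as the paper: both invoke the regular harmonicity of $g_D(x,\cdot)$ in $\Omega$ with respect to $X^D$ from Theorem \ref{t-g-ex}(iv), split according to $\{\tau_\Omega<\tau_D\}$ versus $\{\tau_\Omega=\tau_D\}$, and then bound by the supremum times the harmonic measure. The only cosmetic difference is that the paper writes the expectation as an integral $\int_{\partial\Omega\cap D} g_D(x,z)\,\omega(y,\dif z,\Omega)$ before extracting the supremum, whereas you pull the supremum out directly and then bound $\mathbb{P}^{y}(\tau_\Omega<\tau_D)$ by the harmonic measure via the event inclusion.
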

\begin{proof}
    By the regular harmonicity of $g_{D}(x,\cdot)$ in $\Omega$ with respect to $X^{D}$, we have for $\mathcal{E}$-q.e. $y\in\Omega$, \begin{align*}
        g_{D}(x, y)&=\mathbb{E}^{y}\left[g_{D}(x,(X^{D})_{\tau_{\Omega}})\right]\\ &=\mathbb{E}^{y}\left[g_{D}(x,X_{\tau_{\Omega}})\one_{\{\tau_{\Omega}<\tau_{D}\}}\right]\\ &= \mathbb{E}^{y}\left[g_{D}(x,X_{\tau_{\Omega}})\one_{\{X_{\tau_{\Omega}}\in\partial\Omega\cap D\}}\right]
        \\ &=\int_{\partial \Omega\cap D}g_{D}(x,z)\omega(y,\dif z,\Omega)\\ &\leq \omega(y,D\cap \partial\Omega,\Omega)\sup_{z\in D\cap \partial\Omega}g_{D}(x,z)\\& \leq \omega(y,U\cap \partial\Omega,\Omega)\sup_{z\in U\cap \partial\Omega}g_{D}(x,z).
    \end{align*}
\end{proof}

Next lemma relates capacitary width and harmonic measure. {The proof is same as in \cite[Lemma 4.13]{GSC11}. The difference is that we use Theorem \ref{t-g-cp} here instead of \cite[Lemma 4.8, (4,9)]{GSC11}, which used heat kernel, to obtain \cite[(4.10)]{GSC11}.}

\begin{lemma}\label{l-hm-cw}
There exists $a_1 \in$ $(0,1)$ such that for any non-empty open set $V \subset \mathcal{X}$ and for all $x \in \mathcal{X}, r>0$, $\eta\in(0,1)$, $N\geq N_{K}=10K$,
$$
\omega(x, V \cap \partial B(x, r), V \cap B(x, r)) \leq \exp \left(2-\frac{a_1 r}{w_{N,\eta}(V)}\right)
$$
\end{lemma}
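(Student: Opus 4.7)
The plan is to follow the iteration argument of Aikawa, adapted to this setting as in \cite[Lemma 4.13]{GSC11}, using the Green function estimates in Theorem \ref{t-g-cp} in place of heat-kernel inputs. Write $w:=w_{N,\eta}(V)$ for brevity. If $a_{1}r\leq 2w$, then $2-a_{1}r/w\geq 0$, so the bound is trivial by $\omega\leq 1$; hence I only need to treat $r>2w/a_{1}$, and the constant $a_{1}\in(0,1)$ (which will depend on $\eta$, $N$, $K$, $C_{\mathrm{H}}$, $\delta_{\mathrm{H}}$) will be fixed at the end.

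The key ingredient is a one-step escape estimate: there is $c_{*}>0$, depending only on $K$, $C_{\mathrm{H}}$, $\delta_{\mathrm{H}}$, such that for every $y\in V$ and every $s>w$,
\begin{equation*}
\omega\bigl(y,V\cap\partial B(y,Ns),V\cap B(y,Ns)\bigr)\leq 1-c_{*}\eta.
\end{equation*}
To prove it, put $D:=B(y,Ns)$ and $F:=\overline{B(y,s)}\setminus V$; the capacitary width inequality gives $\Capacity_{D}(F)\geq\eta\,\Capacity_{D}(\overline{B(y,s)})$. Writing $\sigma_{F}$ for the hitting time of $F$, the Green-potential formula yields $\mathbb{P}^{y}(\sigma_{F}<\tau_{D})=\int_{F}g_{D}(y,z)\,\dif\mu_{F}(z)$, where $\mu_{F}$ is the equilibrium measure of $F$ in $D$ with $\mu_{F}(F)=\Capacity_{D}(F)$. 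Since $N\geq 10K$, we have $B(y,(1+2K)s)\subset D$, so \eqref{e-g-cap} gives $\min_{\partial B(y,s)}g_{D}(y,\cdot)\geq\oldconstant{c1}^{-1}\Capacity_{D}(\overline{B(y,s)})^{-1}$, and by the minimum principle for $g_{D}(y,\cdot)$ on $B(y,s)\setminus\{y\}$ (where it is harmonic and blows up at $y$), this lower bound propagates to every point of $\overline{B(y,s)}\setminus\{y\}\supset F$. Integrating against $\mu_{F}$ yields $\mathbb{P}^{y}(\sigma_{F}<\tau_{D})\geq\oldconstant{c1}^{-1}\eta=:c_{*}\eta$. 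The event $\{\sigma_{F}<\tau_{D}\}$ is contained in the event that the process exits $V\cap B(y,Ns)$ through $\partial V$, which is the complement of the event we wish to estimate, so the one-step bound follows.

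For the iteration, fix $s$ slightly above $w$, let $\rho:=Ns$ and $k:=\lfloor r/\rho\rfloor$. Set $y_{0}:=x$, and on the event of survival in $V$, let $y_{j+1}:=X_{\tau_{V\cap B(y_{j},\rho)}}\in V\cap\partial B(y_{j},\rho)$; by induction $d(x,y_{j})\leq j\rho$, so $B(y_{j},\rho)\subset B(x,r)$ for every $j<k$. Using Lemma \ref{l-hm-rh}(ii) together with the strong Markov property to chain the one-step bound through the $k$ nested stages, I obtain $\omega(x,V\cap\partial B(x,r),V\cap B(x,r))\leq(1-c_{*}\eta)^{k}\leq\exp(-c_{*}\eta k)$. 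Because $k\geq r/(Ns)-1$ and one may let $s\downarrow w$, rearranging and absorbing the residual factor $\exp(c_{*}\eta)\leq e$ into the prefactor $e^{2}$ yields the target estimate with $a_{1}=c_{*}\eta/N$. The main obstacle lies in the one-step estimate: the Green-potential formula for the hitting probability holds a priori only $\mathcal{E}$-quasi-everywhere, and extending it to every $y\in V$ requires the joint continuity of $g_{D}$ off the diagonal (Theorem \ref{t-g-ex}(ii)) together with the occupation-density formula (Theorem \ref{t-g-ex}(iii)); the iteration itself is then routine.
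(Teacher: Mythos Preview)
Your approach is the same as the paper's---a one-step escape estimate followed by the iteration of \cite[Lemma 4.13]{GSC11}---but your one-step estimate is cleaner. The paper proves $\int_{E}g_{\Omega}(z,\cdot)\,\dif\nu_{E}\geq\oldconstant{c2}^{-4}\eta$ for \emph{all} $z\in B(y,s)$ by comparing the equilibrium potentials of $E=\overline{B(y,s)}\setminus V$ and $F=\overline{B(y,s)}$ at points of $\partial B(y,3s/2)$ via Theorem~\ref{t-g-cp}(ii), and then passes to hitting probabilities by the strong Markov property at $\tau_{B(y,3s/2)}$. You instead work only at the centre $y$: the bound \eqref{e-g-cap} together with the minimum principle in Theorem~\ref{t-g-ex}(v) gives $g_{D}(y,z)\geq\oldconstant{c1}^{-1}\Capacity_{D}(\overline{B(y,s)})^{-1}$ on $\overline{B(y,s)}\setminus\{y\}\supset F$, and integrating against $\mu_{F}$ yields the one-step bound directly. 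Since the iteration re-centres at each stage, the bound at the centre is all that is needed; your argument uses only Theorem~\ref{t-g-cp}(i) rather than~(ii).

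Two minor points, shared with the paper. First, your $a_{1}=c_{*}\eta/N$ depends on $\eta$ and $N$, contradicting the quantifier order in the statement; the paper's proof gives $a_{1}=\oldconstant{c2}^{-4}\eta/N$ with the same defect. This is harmless because the only application (Lemma~\ref{l-hmkm}) fixes $\eta$ and $N=N_{K}$ in advance. Second, the identity $\mathbb{P}^{y}(\sigma_{F}<\tau_{D})=\int_{F}g_{D}(y,z)\,\dif\mu_{F}(z)$ holds only $\mathcal{E}$-q.e.\ in $y$; your appeal to joint continuity of $g_{D}$ and the occupation-density formula does not close this gap, since the hitting probability itself need not be continuous. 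The paper's ``by continuity of Green function'' is equally informal at the corresponding step, and the iteration in \cite{GSC11} is understood to deliver the bound $\mathcal{E}$-q.e., which suffices downstream.
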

\begin{proof}
For any $\kappa\in(0,1)$, we can find $s$ with $w_{N,\eta}(V)\leq s<w_{N,\eta}(V)+\kappa$ such that \[\frac{\Capacity_{B(y, N s)}(\overline{B(y, s)} \setminus V)}{\Capacity_{B(y, N s)}(\overline{B(y, s)})} \geq \eta,\quad\text{for all }y\in V.\] Fix $y\in V$ and let $E=\overline{B(y, s)} \setminus V$. Let $\nu_{E}$ be the equilibrium measure of $E$ in $\Omega:=B(y, N s)$. We claim that there exists $\epsilon>0$ such that \begin{equation}\label{e-hmcw-cl}\int_{E} g_{\Omega}(z, \cdot)\dif \nu_{E}\geq\epsilon\eta \quad\text{for all }z\in B(y,s).\end{equation}In fact, let $F=\overline{B(y, s)}$ and $\nu_{F}$ be the equilibrium measure of $F$ in $\Omega$. For any $z\in\partial B(y,3s/2)$ and any $\zeta\in B(y,s)$, we have $\{y,z,\zeta\}\subset B(y,2s)$, $\min(d(z,\zeta),d(z,y))\geq s/2$ and $B(y,2(3+2K)s)\subset \Omega$, we conclude by Theorem \ref{t-g-cp} and the continuity of Green functions that \[\oldconstant{c2}^{-1}g_{\Omega}(z,y)\leq g_{\Omega}(z,\zeta)\leq \oldconstant{c2}g_{\Omega}(z,y) \text{ for all } \zeta\in F \text{ and all }z\in\partial B(y,3s/2) .\] Hence \begin{equation}\int_{E} g_{\Omega}(z,\cdot)\dif \nu_{E}\in\left[\oldconstant{c2}^{-1}g_{\Omega}(z,y)\nu_{E}(E),\oldconstant{c2}g_{\Omega}(z,y)\nu_{E}(E)\right]\text{ for all }z\in\partial B(y,3s/2)\end{equation} and \begin{equation}\label{e-hmcw1}\int_{F} g_{\Omega}(z,\cdot)\dif \nu_{F}\in\left[\oldconstant{c2}^{-1}g_{\Omega}(z,y)\nu_{F}(F),\oldconstant{c2}g_{\Omega}(z,y)\nu_{F}(F)\right]\text{ for all }z\in\partial B(y,3s/2)\end{equation}Moreover, since $\nu_{F}(F)=\Capacity_{\Omega}(\overline{B(y,s)})\leq \Capacity_{\Omega}(B(y,3s/2))\leq \left(\min_{z^{\prime}\in\partial B(y,3s/2)}g_{\Omega}(z^{\prime},y)\right)^{-1}$, we see that 
\begin{align*}
g_{\Omega}(z,y)\nu_{F}(F)&\leq \left(\max_{z^{\prime}\in\partial B(y,3s/2)}g_{\Omega}(z^{\prime},y)\right)\nu_{F}(F)\\&\leq \oldconstant{c1}\left(\min_{z^{\prime}\in\partial B(y,3s/2)}g_{\Omega}(z^{\prime},y)\right)\nu_{F}(F)\leq  \oldconstant{c1}.
\end{align*}
On the other hand, since \begin{align}
\nu_{F}(F)=\Capacity_{\Omega}(\overline{B(y,s)})&\geq\Capacity_{\Omega}({B(y,s)})\\
&\geq \left(\min_{z^{\prime}\in\partial B(y,s)}g_{\Omega}(z^{\prime},y)\right)^{-1}\geq \oldconstant{c2}^{-1} \left(\min_{z^{\prime}\in\partial B(y,3s/2)}g_{\Omega}(z^{\prime},y)\right)^{-1},
\end{align} we have 
\[g_{\Omega}(z,y)\nu_{F}(F)\geq \left(\min_{z^{\prime}\in\partial B(y,3s/2)}g_{\Omega}(z^{\prime},y)\right)\nu_{F}(F)\geq  \oldconstant{c2}^{-1}.\]
Thus \eqref{e-hmcw1} can be refined as \[\oldconstant{c2}^{-2}\leq\int_{F} g_{\Omega}(z,\cdot)\dif \nu_{F} \leq \oldconstant{c1}\oldconstant{c2}\text{ for all }z\in\partial B(y,3s/2). \]
Hence for $z\in\partial B(y,3s/2)$, \begin{align*}\int_{E} g_{\Omega}(z, \cdot)\dif \nu_{E}&\geq\left(\int_{E} g_{\Omega}(z, \cdot)\dif \nu_{E}\right)\left(\oldconstant{c2}^{2}\int_{F} g_{\Omega}(z, \cdot)\dif \nu_{F}\right)^{-1}\\ &\geq \frac{\oldconstant{c2}^{-1}g_{\Omega}(z,y)\nu_{E}(E)}{\oldconstant{c2}^{3}g_{\Omega}(z,y)\nu_{F}(F)}=\frac{1}{\oldconstant{c2}^{4}}\frac{\Capacity_{B(y, N s)}(\overline{B(y, s)} \setminus V)}{\Capacity_{B(y, N s)}(\overline{B(y, s)})} \geq \oldconstant{c2}^{-4}\eta.\end{align*}
We know that $ \int_{E} g_{\Omega}(z, \cdot)\dif \nu_{E}=\mathbb{P}^{z}(\tau_{E^{c}}<\tau_{\Omega})$ for $\mathcal{E}$-q.e. $z\in \Omega$. So for $\mathcal{E}$-q.e. $z\in B(y,s)$, by strong Markov property,\begin{align*}
\mathbb{P}^{z}(\tau_{E^{c}}<\tau_{\Omega})&=\mathbb{E}^{z}\left[\one_{\{\tau_{E^{c}}<\tau_{\Omega}\}}\right]\\&= \mathbb{E}^{z}\left[\mathbb{E}^{X_{\tau_{B(y,3s/2)}}}\left[\one_{\{\tau_{E^{c}}<\tau_{\Omega}\}}\right]\right]\geq \oldconstant{c2}^{-4}\eta.
\end{align*} Thus \eqref{e-hmcw-cl} holds by continuity of Green function. The rest is the same as \cite[Lemma 4.13]{GSC11}.
\end{proof}
The following lemma is an estimate of capacity width, which is an analogue of  \cite[(2.1)]{Aik01}, \cite[Lemma 5.2]{BM19} and \cite[Lemma 4.12]{GSC11}.
\begin{lemma}\label{l-capw}
    There exists $\eta=\eta(A,K,C_{\mathrm{H}},\delta_{\mathrm{H}})$ such that for $N=N_{K}$, and for any $0<r<(\max(13A,100N_{K}))^{-1}\diam(U)$, there holds \[w_{N,\eta}\left(\{x\in U:\delta_{U}(x)<r\}\right)\leq 6Ar.\]
\end{lemma}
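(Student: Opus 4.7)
The plan is to exploit the uniformity of $U$ to locate, for each $y\in V:=\{x\in U:\delta_U(x)<r\}$, a ball $\overline{B(z,r/2)}$ sitting inside $\overline{B(y,6Ar)}\setminus V$, and then to compare the capacity of this small ball with that of $\overline{B(y,6Ar)}$ itself via Theorem \ref{t-g-cp}. Fix $y\in V$; since $\delta_U(y)<r$, pick $\xi\in\partial U$ with $d(y,\xi)<r$. The assumption $r<(13A)^{-1}\diam(U)$ gives $\diam(U)>6Ar$, hence $B_U(\xi,3Ar)\neq U$, and Lemma \ref{l-pick2} (applied with radius $3Ar$) produces a point $z\in U\cap\partial B(\xi,3Ar)$ with $\delta_U(z)\geq(2A)^{-1}\cdot 3Ar=3r/2$.

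The triangle inequality yields $d(y,z)\leq(3A+1)r$. For every $w\in\overline{B(z,r/2)}$ one has $\delta_U(w)\geq 3r/2-r/2=r$, so $w\notin V$, while $d(y,w)\leq(3A+1)r+r/2\leq 6Ar$ since $A\geq 1$. Thus $\overline{B(z,r/2)}\subset\overline{B(y,6Ar)}\setminus V$, and monotonicity of capacity in the set being capacitated gives
\[\Capacity_{B(y,6NAr)}\bigl(\overline{B(y,6Ar)}\setminus V\bigr)\geq\Capacity_{B(y,6NAr)}\bigl(\overline{B(z,r/2)}\bigr).\]

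To secure a universal lower bound on the capacity ratio, I would use the Green function characterization \eqref{e-g-cap} together with \eqref{e-hg}: up to constants depending only on $K,C_{\mathrm{H}},\delta_{\mathrm{H}}$,
\[\Capacity_{B(y,6NAr)}\bigl(\overline{B(z,r/2)}\bigr)^{-1}\asymp g_{B(y,6NAr)}(z,w_z),\qquad\Capacity_{B(y,6NAr)}\bigl(\overline{B(y,6Ar)}\bigr)^{-1}\asymp g_{B(y,6NAr)}(y,w_y),\]
for arbitrary $w_z\in\partial B(z,r/2)$ and $w_y\in\partial B(y,6Ar)$. Both pairs $(z,w_z)$ and $(y,w_y)$ lie in the ball $B(y,6Ar)$ with mutual distances bounded below by $r/2=(6Ar)/(12A)$, so Theorem \ref{t-g-cp}(ii), applied with $A_1=12A$ and ambient domain $D=B(y,6NAr)$, compares the two Green function values up to a constant $C=C(A,K,C_{\mathrm{H}},\delta_{\mathrm{H}})$. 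Chaining these estimates yields the desired constant $\eta=\eta(A,K,C_{\mathrm{H}},\delta_{\mathrm{H}})>0$.

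All the inclusion hypotheses of Theorem \ref{t-g-cp}(i),(ii) needed above reduce to the single inequality $(3+2K)\cdot 6A\leq 6NA$, i.e.\ $3+2K\leq N=10K$, which holds; the role of the stronger bound $r<(100N_K)^{-1}\diam(U)$ is to ensure that $\mathcal{X}\setminus B(y,6NAr)$ is not $\mathcal{E}$-polar, so that the Green functions and capacities used above are well-defined. The main obstacle is therefore not conceptual but the careful bookkeeping of radii needed to verify these inclusions; once checked, the chain of inequalities produces $\eta$ uniformly in $y\in V$.
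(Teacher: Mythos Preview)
Your argument is correct and follows the same strategy as the paper: locate a ball $\overline{B(z,\rho)}\subset\overline{B(y,6Ar)}\setminus V$ via uniformity, then compare its capacity to that of $\overline{B(y,6Ar)}$. The paper finds $z$ using Lemma~\ref{l-pick} applied directly at the point $x\in V_r$ (rather than first passing to a nearby boundary point and invoking Lemma~\ref{l-pick2}), and carries out the capacity comparison via the capacity-to-capacity estimates of Theorem~\ref{t-g-cp}(iii) rather than by converting to Green-function values through parts (i) and (ii); these are interchangeable implementations of the same idea. One minor bookkeeping slip: your point $w_y$ lies on $\partial B(y,6Ar)$, not in the open ball $B(y,6Ar)$, so to invoke Theorem~\ref{t-g-cp}(ii) you should work in a slightly larger ball such as $B(y,7Ar)$ (with $A_1=14A$), which the inclusion $(3+2K)\cdot 7\le 60K$ still accommodates.
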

\begin{proof}
    Denote $V_{r}=\{x\in U:\delta_{U}(x)<r\}$. For any $x\in V_{r}$, $B_{U}(x,6Ar)\neq U$ since $r<(13A)^{-1}\diam(U)$. By Lemma \ref{l-pick}, there exists $z\in B_{U}(x,6Ar)$ with the property that $d(x,z)=3Ar$ and $\delta_{U}(z)\geq (2A)^{-1}6Ar>2r.$ By the domain monotonicity of capacity, \[\Capacity_{B(x, 6NAr)}(\overline{B(x, 6Ar)} \setminus V_{r})\geq \Capacity_{B(x, 6NAr)}(\overline{B(z, r)})\geq \Capacity_{B(z, 6(N+1)Ar)}(\overline{B(z, r)}).\]
    By Theorem \ref{t-g-cp}, we know that     \begin{align*}
    \Capacity_{B(z, 6(N+1)Ar)}(\overline{B(z, r)})&\geq\Capacity_{B(z, 6(N+1)Ar)}({B(z, r)})\\ &\geq \oldconstant{c3}^{-1}  \Capacity_{B(z, 6(N+1)Ar)}({B(z, 4Ar)})\\ &\geq \oldconstant{c3}^{-1}  \Capacity_{B(x, 3(2N+3)Ar)}({B(x, Ar)})\\ &\geq \oldconstant{c3}^{-2}  \oldconstant{c3a}^{{-1}}\Capacity_{B(x, 6NAr)}({B(x, 7Ar)}) \\&\geq \oldconstant{c3}^{-2}\oldconstant{c3a}^{{-1}}\Capacity_{B(x, 6NAr)}(\overline{B(x, 6Ar)}).
    \end{align*}
 So for $\eta=\oldconstant{c3}^{-2}\oldconstant{c3a}^{{-1}}$, and any $r<(13A)^{-1}\diam(U)$, we have \[\frac{\Capacity_{B(x, 6NAr)}(\overline{B(x, 6Ar)} \setminus V_{r})}{\Capacity_{B(x, 6NAr)}(\overline{B(x, 6Ar)})}\geq \eta,\]
    which means $w_{N,\eta}(V_{r})\leq 6Ar$ by definition.
\end{proof}
The following lemma gives another type estimates on length of Harnack chain (compared to Proposition \ref{p-hc-p}). {This type of chain allows us to circumvent the singularities of the Green functions, and exhibits logarithmic growth rates (instead of polynomial growth as in Proposition \ref{p-hc-p}), which plays an important role in the box argument of Lemma \ref{l-hmkm}. Since the metric space under consideration is not necessarily geodesic and may lack non-trivial rectifiable curves, a novel argument is required to give this kind of estimate.}
\begin{lemma}\label{l-hc-log}
    Let $A_{1}\geq3$, $\xi\in\partial U$ and $x\in B_{U}(\xi,2r)$. If $\eta\in U$ satisfies $d(\xi,\eta)=A_{1}r$ and $\delta_{U}(\eta)\geq (2A)^{-1}A_{1}r$. Then there exists a point $\widetilde{\eta}$ with $d(\eta,\widetilde{\eta})=A^{-1}r$ such that for any $M>1$ there exists an $M$-Harnack chain from $\widetilde{\eta}$ to $x$ in $B_{U}(\xi,4A^{2}(A_{1}+2)r)\setminus \{\eta\}$ whose length is less than \[C\log\left(1+\frac{r}{\delta_{U}(x)}\right)+C,\]
    where $C=C(A,A_{1},M)$ is a constant depending only on $A$, $M$ and the doubling constant of $m$.
\end{lemma}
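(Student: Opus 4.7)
The plan is to organise the $M$-Harnack chain along the geometric scales $s_{k}=2^{k}\delta_{U}(x)$ relative to $x$, using a single $A$-uniform curve from $x$ towards $\eta$ as backbone; the logarithmic length then arises because each pair of consecutive scales is bridged by a Harnack chain of bounded length furnished by Proposition \ref{p-hc-p}.

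\textbf{Setup.} Fix an $A$-uniform curve $\gamma\colon[0,1]\to U$ with $\gamma(0)=\eta$ and $\gamma(1)=x$, which exists since $d(\eta,x)\le(A_{1}+2)r$. Because $\diam(\gamma)\le A(A_{1}+2)r$, we have $\gamma\subset B(\xi,(1+A(A_{1}+2))r)\subset B_{U}(\xi,4A^{2}(A_{1}+2)r)$. Set $t^{*}:=\sup\{t\in[0,1]:d(\eta,\gamma(t))\le A^{-1}r\}$ and $\widetilde{\eta}:=\gamma(t^{*})$; continuity gives $d(\eta,\widetilde{\eta})=A^{-1}r$, while the sub-arc $\gamma_{1}:=\gamma|_{[t^{*},1]}$ from $\widetilde{\eta}$ to $x$ satisfies $d(z,\eta)\ge A^{-1}r$ for every $z\in\gamma_{1}$.

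\textbf{Scale hierarchy and main chain.} Let $k_{1}$ be the largest integer with $s_{k_{1}}<(5A^{2})^{-1}r$, so that $k_{1}+1\le C(A)\bigl(\log(1+r/\delta_{U}(x))+1\bigr)$. By the intermediate value theorem applied to $d(x,\cdot)\circ\gamma_{1}$, choose $y_{0}=x,y_{1},\ldots,y_{k_{1}}\in\gamma_{1}$ with $d(x,y_{k})=s_{k}$; since $s_{k}\le A^{-1}r\le d(\eta,y_{k})$, the $A$-uniform property of $\gamma$ gives $\delta_{U}(y_{k})\ge A^{-1}s_{k}$. For each $k\in\{0,\ldots,k_{1}-1\}$, take a fresh $A$-uniform curve $\alpha_{k}$ from $y_{k}$ to $y_{k+1}$ in $U$; Lemma \ref{l-dist} ensures $\delta_{U}(z)\ge(2A^{2})^{-1}s_{k}$ on $\alpha_{k}$ and $\diam(\alpha_{k})\le A(s_{k}+s_{k+1})=3As_{k}$. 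Proposition \ref{p-hc-p} applied with radius $(4A^{2})^{-1}s_{k}$ and the prescribed $M$ produces an $M$-Harnack chain from $y_{k}$ to $y_{k+1}$ of at most $C_{D}(1+12A^{3}M)^{\alpha}$ balls. Each ball centre $z\in\alpha_{k}$ obeys $d(z,\xi)\le d(z,y_{k})+d(y_{k},x)+d(x,\xi)\le(3A+1)s_{k}+2r$ and $d(z,\eta)\ge d(x,\eta)-d(z,x)\ge(A_{1}-2)r-(3A+1)s_{k}$, and these two bounds, combined with $s_{k}<(5A^{2})^{-1}r$, $A_{1}\ge3$, and $A\ge1$, place every ball of the chain inside $B_{U}(\xi,4A^{2}(A_{1}+2)r)\setminus\{\eta\}$. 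Summing over $k$ consumes at most $C(A,M)\bigl(\log(1+r/\delta_{U}(x))+1\bigr)$ balls to reach $y_{k_{1}}$.

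\textbf{Closing gap and main obstacle.} It remains to link $y_{k_{1}}$ to $\widetilde{\eta}$ by $O_{A,M}(1)$ further balls. Because $\widetilde{\eta}\in B(\eta,(2A)^{-1}A_{1}r)\subset U$ we have $\delta_{U}(\widetilde{\eta})\ge(2A)^{-1}r$, and together with $\delta_{U}(y_{k_{1}})\ge(10A^{3})^{-1}r$, Lemma \ref{l-dist} produces an $A$-uniform curve $\alpha_{*}$ from $y_{k_{1}}$ to $\widetilde{\eta}$ with $\delta_{U}\ge c(A)r$ on it. Applying Proposition \ref{p-hc-p} with a radius $\rho=c_{A,M}r$ chosen strictly less than $A^{-1}r$ yields a chain of length $O_{A,M}(1)$. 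The main difficulty is that $\alpha_{*}$ may approach, or pass through, the single point $\eta$; whenever a proposed centre on $\alpha_{*}$ lies within $B(\eta,\rho)$ we replace it by a short bypass inside the punctured ball $B(\eta,(2A)^{-1}A_{1}r)\setminus\{\eta\}\subset U$, whose bounded chain-connectivity at scale $\rho$ follows from the $\mathrm{RBC}(K)$ property of $\mathcal{X}$ recorded in Remark \ref{r-escape}(ii). The detour contributes only $O_{A,M}(1)$ balls, and the full $M$-Harnack chain from $\widetilde{\eta}$ to $x$ lies inside $B_{U}(\xi,4A^{2}(A_{1}+2)r)\setminus\{\eta\}$ and attains the asserted length bound $C\log(1+r/\delta_{U}(x))+C$.
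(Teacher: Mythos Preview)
Your overall architecture---geometric scales along a uniform backbone, with Proposition \ref{p-hc-p} supplying a bounded-length piece at each scale---is exactly the paper's strategy, and your main chain from $x$ to $y_{k_{1}}$ is correct. The gap is in the ``closing gap'' step.

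The assertion that the punctured ball $B(\eta,(2A)^{-1}A_{1}r)\setminus\{\eta\}$ enjoys bounded chain-connectivity at scale $\rho$ ``by the $\mathrm{RBC}(K)$ property'' is not justified, and is false in the generality the lemma is stated in. The $\mathrm{RBC}(K)$ property concerns connectivity of full balls $B(x_{0},R)$, not of punctured balls; it says nothing about avoiding a specified interior point. In spaces with cut points---the Sierpinski gasket is a standard example satisfying metric doubling and $\EHI$---a single interior point can disconnect a ball, so no bypass of $\eta$ with balls of a fixed small radius need exist. Since your fresh uniform curve $\alpha_{*}$ from $y_{k_{1}}$ to $\widetilde{\eta}$ has diameter of order $A^{2}(A_{1}+2)r$, which is much larger than $d(\widetilde{\eta},\eta)=A^{-1}r$, nothing prevents $\alpha_{*}$ from passing through $\eta$, and your detour argument cannot repair this.

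The paper circumvents this by never taking a fresh uniform curve between points lying on opposite sides of $\eta$. It runs the geometric scale from a ``midpoint'' $\gamma(t_{0})$ of the backbone downward toward $x$; the ratio $q$ is chosen small enough that each connecting curve $\gamma_{j}$ has diameter $\le 2A^{2}(A_{1}+2)q^{j}r$, which is strictly less than the distance $d(\gamma(t_{j}),\eta)\ge (A_{1}-2)r/2$ minus a safety margin, so $\gamma_{j}$ provably avoids $B(\eta,(2A)^{-1}r)$. For the top level the paper does not try to reach a predetermined $\widetilde{\eta}$ on $\gamma$: instead it takes a fresh uniform curve $\widetilde{\gamma}$ from $\eta$ to $\gamma(t_{1})$ and \emph{defines} $\widetilde{\eta}$ as the last point of $\widetilde{\gamma}$ on $\partial B(\eta,A^{-1}r)$, so the sub-arc from $\widetilde{\eta}$ to $\gamma(t_{1})$ lies outside $B(\eta,A^{-1}r)$ by construction. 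Your fix would be to redefine $\widetilde{\eta}$ in the same spirit---take a uniform curve from $y_{k_{1}}$ to $\eta$ (not to $\widetilde{\eta}$), let $\widetilde{\eta}$ be its first entry into $\overline{B(\eta,A^{-1}r)}$, and chain along the sub-arc outside $B(\eta,A^{-1}r)$---rather than attempting an unjustified detour.
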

\begin{proof}
The proof is motivated by \cite[Lemma 2.23]{KM23a}. Let $\gamma:[0,1]\rightarrow U$ be a uniform curve from $\eta$ to $x$ with $\gamma(0)=\eta$ and $\gamma(1)=x$. By definition of uniform curve, $\diam(\gamma)\leq Ad(\eta,x)\leq A(A_{1}+2)r$. Define
        \[t_{0}=\sup\{t\in[0,1]: d(\eta,\gamma(t))<d(\gamma(t),x) \}.\]
        Then $t_{0}\in(0,1)$. By the continuity of distance function, we have 
        \begin{align}
            d(\eta,\gamma(t_{0}))&=d(\gamma(t_{0}),x)\\
            &= \left(d(\eta,\gamma(t_{0}))+d(\gamma(t_{0}),x)\right)/2\\
            &\geq d(\eta,x)/2\geq \left(d(\eta,\xi)-d(\xi,x)\right)/2\geq (A_{1}-2)r/2,
        \end{align} and \[d(\eta,\gamma(t))\geq d(\gamma(t),x) \text{ and }d(\eta,\gamma(t))\geq (A_{1}-2)r/2 \quad \text{for all }t_{0}\leq t\leq1.\]  Let $q=(3A)^{-2}(A_{1}+2)^{-1}<1$. Choose $\{t_{j}\}_{j\geq1}$ such that $t_{0}<t_{1}<\ldots<1$ and \[d(\gamma(t_{j}),x)=q^{j}d(\gamma(t_{0}),x)\in\left(\frac{1}{2}(A_{1}-2)q^{j}r, A(A_{1}+2)q^{j}r\right).\] By definition of uniform curve, we have \[\delta_{U}(\gamma(t_{j}))\geq A^{-1}\min(d(\eta,\gamma(t_{j})),d(\gamma(t_{j}),x))\geq (2A)^{-1}(A_{1}-2)q^{j}r,\quad j\geq0.\] 
        
Define $\widetilde{\gamma}$ to be an $A$-uniform curve from $\eta$ to $\gamma(t_{1})$ and define $\widetilde{\eta}$ as the last point of $\widetilde{\gamma}$ that leaves $B(\eta,A^{-1}r)$. By Proposition \ref{p-hc-p}, we can find an $M$-Harnack chain from $\eta$ to $\gamma(t_{1})$ centred in $\widetilde{\gamma}$, with radius $(4A)^{-2}(A_{1}-2)qr$ and length less than $C_{D}\left(1+80MA^{4}q^{-1}\right)^{\alpha}$. This chain is contained in $B(\xi, 4A^{2}(A_{1}+2)r)\setminus\{\eta\}$

For $j\geq1$, let $\gamma_{j}$ be an $A$-uniform curve from $\gamma(t_{j})$ to $\gamma(t_{j+1})$. Then
        \begin{align*}
            d(\gamma(t_{j}),\gamma(t_{j+1}))&\leq d(\gamma(t_{j}),x)+d(\gamma(t_{j+1}),x)\\&\leq (q^{j}+q^{j+1})d(\gamma(t_{0}),x) \leq 2A(A_{1}+2)q^{j}r,
        \end{align*}
        which implies $\diam(\gamma_{j})\leq 2A^{2}(A_{1}+2)q^{j}r \ \text{ and }$ \[ \gamma_{j}\subset B_{U}(\gamma(t_{j}),2A^{2}(A_{1}+2)q^{j}r)\subset B(\xi, 3A^{2}(A_{1}+2))\setminus B(\eta, (2A)^{-1}r)\]
        since $2A^{2}(A_{1}+2)q^{j}r+(2A)^{-1}r\leq 2A^{2}(A_{1}+2)qr+(2A)^{-1}r<r\leq d(\eta,\gamma(t_{j}))$. By Proposition \ref{p-hc-p}, we can find an $M$-Harnack chain from $\gamma(t_{j})$ to $\gamma(t_{j+1})$ centred in $\gamma_{j}$ with length less than $C_{D}(1+24MA^{4}q^{-1})^{\alpha}$. These Harnack chains are all contained in $B_{U}(\xi, 4A^{2}(A_{1}+2)r)\setminus\{\eta\}$. We stop this procedure when  $j$ is the smallest number such that $\gamma(t_{j})\in B(x,M^{-1}\delta_{U}(x))$. So $j$ would be less than \[1+\left(\log\frac{1}{q}\right)^{-1}\log\left(1+\frac{r}{6AM\delta_{U}(x)}\right).\] By re-ordering these balls in an obvious way, we can construct such an $M$-Harnack chain. Adding all the lengths of Harnack chains, we have the desired bound.
\end{proof}
Define \[A_{1}=4(1+16K)A\text{, }A_{2}=4A^{2}(A_{1}+2)+16KA^{-1} \text{ and } C_{0}=10^{8}K^{3}A^{5}.\]
A key step in proving BHP is to estimate harmonic measure by Green functions, which is usually done by a so-called \emph{box argument}, see \cite[Theorem 2.4]{BB91} and \cite[Lemma 2]{Aik01}. 
\begin{lemma}\label{l-hmkm}
    There exists $\newconstant\label{c6} \in(0, \infty)$ such that for all $0<r<C_{0}^{-1}\diam(U,d), \xi \in \partial U$, there exist $\xi_r, \xi_r^{\prime} \in U$ that satisfy $d(\xi, \xi_r)=A_{1}r, \delta_U\left(\xi_r\right) \geq (2A)^{-1}A_{1}r, d\left(\xi_r, \xi_r^{\prime}\right)=A^{-1} r$ and
$$
\omega\left(x, U \cap \partial B_U(\xi, 2 r), B_U(\xi, 2 r)\right) \leq \oldconstant{c6} \frac{g_{B_U\left(\xi, A_2 r\right)}\left(x, \xi_r\right)}{g_{B_U\left(\xi, A_2 r\right)}\left(\xi_r^{\prime}, \xi_r\right)}, \quad\text{for $\mathcal{E}$-q.e. } x \in B_U(\xi, r) .
$$
\end{lemma}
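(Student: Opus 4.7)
The plan is to implement a box-type argument in the spirit of Aikawa \cite[Lemma~2]{Aik01} and Bass--Burdzy \cite[Theorem~2.4]{BB91}, using the metric-measure tools developed above. Since $A_{1}<C_{0}$ and $A_{2}<C_{0}$ with $r<C_{0}^{-1}\diam(U,d)$, Lemma~\ref{l-pick2} applied with radius $A_{1}r$ produces $\xi_{r}\in U\cap\partial B(\xi,A_{1}r)$ with $\delta_{U}(\xi_{r})\ge(2A)^{-1}A_{1}r$, and the Green function $G(\cdot):=g_{B_{U}(\xi,A_{2}r)}(\cdot,\xi_{r})$ is well defined by Theorem~\ref{t-g-ex}. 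For any $y\in B_{U}(\xi,2r)$, Lemma~\ref{l-hc-log} applied with $\eta=\xi_{r}$ and target $y$ produces a point $\widetilde{\eta}_{y}\in\partial B(\xi_{r},A^{-1}r)$ together with, for each $M>1$, an $M$-Harnack chain from $\widetilde{\eta}_{y}$ to $y$ in $B_{U}(\xi,A_{2}r)\setminus\{\xi_{r}\}$ of length $\le C\log(1+r/\delta_{U}(y))+C$. I fix one such $\widetilde{\eta}_{y}$ as $\xi_{r}'$. Since $\delta_{U}(\xi_{r})\ge 2(1+16K)r\ge(1+2K)A^{-1}r$, Theorem~\ref{t-g-cp}(i) (with $x=\xi_{r}$, radius $A^{-1}r$, $D=B_{U}(\xi,A_{2}r)$) shows that $G(\widetilde{\eta}_{y})$ and $G(\xi_{r}')$ are comparable within the factor $\oldconstant{c1}$.

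Set $h(x):=\omega(x,U\cap\partial B_{U}(\xi,2r),B_{U}(\xi,2r))$. Since $\xi_{r}\notin\overline{B(\xi,2r)}$, both $h$ and $G$ are non-negative and regular harmonic in $\Omega:=B_{U}(\xi,2r)$ with respect to $X^{\Omega}$, and both vanish $\mathcal{E}$-q.e.\ on $\partial U\cap\overline{B(\xi,2r)}$. The objective is to prove $h(x)\le\oldconstant{c6}G(x)/G(\xi_{r}')$ for $\mathcal{E}$-q.e.\ $x\in B_{U}(\xi,r)$, which I do by splitting $U\cap\partial B(\xi,2r)$ into a \textit{good} set $S_{\varepsilon}^{+}:=\{y:\delta_{U}(y)\ge\varepsilon r\}$ and a \textit{bad} set $S_{\varepsilon}^{-}:=\{y:\delta_{U}(y)<\varepsilon r\}$ for a small $\varepsilon\in(0,1)$ to be chosen. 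For $y\in S_{\varepsilon}^{+}$, the Harnack chain above has length at most $C\log(1/\varepsilon)+C$, so \eqref{e-hccp} combined with the comparability $G(\widetilde{\eta}_{y})\asymp G(\xi_{r}')$ yields $G(y)\ge c_{\varepsilon}G(\xi_{r}')$ with $c_{\varepsilon}=C_{\mathrm{H}}^{-C\log(1/\varepsilon)-C}$, a polynomial expression in $\varepsilon$. The probabilistic maximum principle---comparing boundary values on $\partial\Omega$ of the regular-harmonic functions $\omega(\cdot,S_{\varepsilon}^{+},\Omega)$ and $c_{\varepsilon}^{-1}G(\cdot)/G(\xi_{r}')$, both of which vanish on $\partial U\cap\overline{B(\xi,2r)}$---then gives $\omega(x,S_{\varepsilon}^{+},\Omega)\le c_{\varepsilon}^{-1}G(x)/G(\xi_{r}')$ throughout $\Omega$.

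The main obstacle is to bound $\omega(x,S_{\varepsilon}^{-},\Omega)$ by a constant multiple of $G(x)/G(\xi_{r}')$ uniformly in $x\in B_{U}(\xi,r)$; here is where the capacitary-width input plays its role. Since $S_{\varepsilon}^{-}\subset V_{\varepsilon}:=\{z\in U:\delta_{U}(z)<\varepsilon r\}$ and Lemma~\ref{l-capw} gives $w_{N_{K},\eta_{0}}(V_{\varepsilon})\le 6A\varepsilon r$ for some $\eta_{0}=\eta_{0}(A,K,C_{\mathrm{H}},\delta_{\mathrm{H}})$, Lemma~\ref{l-hm-cw} shows that the harmonic measure for escaping $V_{\varepsilon}\cap B(z,\rho)$ through $V_{\varepsilon}\cap\partial B(z,\rho)$ is at most $\exp(2-a_{1}\rho/(6A\varepsilon r))$. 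For $x\in B_{U}(\xi,r)$ with $\delta_{U}(x)\ge\varepsilon r$, the trivial bound $\omega(x,S_{\varepsilon}^{-},\Omega)\le 1$ combined with the Harnack-chain lower bound $G(x)\ge c_{\varepsilon}G(\xi_{r}')$ already closes the case. For $x\in V_{\varepsilon}$, I iterate across the dyadic scales $\rho_{j}:=\varepsilon 2^{-j}r$ via the strong Markov property: at each scale the capacity-width bound contributes a multiplicative factor $\theta<1$ for staying inside $V_{\varepsilon}$, while descending one scale only costs a bounded factor in $G(\cdot)/G(\xi_{r}')$ via \eqref{e-hccp} and Lemma~\ref{l-hc-log}. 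Summing the geometric series and choosing $\varepsilon$ small enough (depending only on $A,K,C_{\mathrm{H}},\delta_{\mathrm{H}}$ and the doubling constants), the exponential gain from capacity width beats the polynomial loss from Harnack chains, delivering the desired bound with $\oldconstant{c6}$ depending on the same quantities. The $\mathcal{E}$-q.e.\ statement finally follows from the quasi-continuous versions of $h$ and $G$ on $\Omega$.
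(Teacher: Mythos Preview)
Your setup (choice of $\xi_r$ via Lemma~\ref{l-pick2}, of $\xi_r'$ on $\partial B(\xi_r,A^{-1}r)$, and the bound for the ``good'' boundary piece $S_\varepsilon^{+}$ via Lemma~\ref{l-hc-log} together with the probabilistic maximum principle) is correct, and the paper uses the same ingredients. The gap is in your treatment of $S_\varepsilon^{-}$. With $V_\varepsilon$ fixed, Lemma~\ref{l-hm-cw} gives
\[
\omega\bigl(z,\,V_\varepsilon\cap\partial B(z,\rho_j),\,V_\varepsilon\cap B(z,\rho_j)\bigr)\le\exp\Bigl(2-\frac{a_1\rho_j}{w_{N_K,\eta}(V_\varepsilon)}\Bigr)\le\exp\Bigl(2-\frac{a_1\,2^{-j}}{6A}\Bigr),
\]
and this tends to $e^{2}>1$ as $j\to\infty$; it never yields a uniform factor $\theta<1$ at shrinking radii $\rho_j=\varepsilon 2^{-j}r$. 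So the iteration you describe does not gain anything at small scales, and no choice of a single $\varepsilon$ repairs this: the point is precisely that for $x$ with $\delta_U(x)\ll\varepsilon r$ both $\omega_0(x)$ and $G(x)/G(\xi_r')$ are small, and you must compare their \emph{rates}, which a fixed-thickness tube $V_\varepsilon$ cannot see.

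The paper (following Aikawa, Lierl, and Barlow--Murugan) does not split the boundary; it slices the \emph{domain} by level sets of the Green function. Setting $g'(x)=g_{B_U(\xi,A_2 r)}(x,\xi_r)$ and $U_j=\{x:\exp(-2^{j+1})\le\varepsilon_1 g'(x)/g'(\xi_r')<\exp(-2^{j})\}$, Lemma~\ref{l-hc-log} gives $g'(x)/g'(\xi_r')\gtrsim(\delta_U(x)/r)^{C}$, hence $V_j=\bigcup_{k\ge j}U_k\subset\{\delta_U<Ce^{-2^{j}/C}r\}$, and by Lemma~\ref{l-capw} the capacitary width of $V_j\cap B_U(\xi,2r)$ is at most $Ce^{-2^{j}/C}r$. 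With radii $R_j\searrow r$ satisfying $R_{j-1}-R_j\asymp r/j^{2}$, the mean-value property on $V_j\cap B_U(\xi,R_{j-1})$ plus Lemma~\ref{l-hm-cw} yield, for $d_j:=\sup_{U_j\cap B_U(\xi,R_j)}\omega_0\cdot g'(\xi_r')/g'$,
\[
d_j-d_{j-1}\;\lesssim\;\exp\Bigl(2^{j+1}-c\,j^{-2}\,e^{2^{j}/C}\Bigr),
\]
which is summable. The mechanism that makes this converge---capacitary width shrinking \emph{double-exponentially} in the level index while the radial budget shrinks only polynomially---is exactly what your fixed-$V_\varepsilon$, shrinking-$\rho_j$ scheme does not provide. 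If you want to run an argument stratified by $\delta_U$ rather than by $g'$, you must let the tube thickness vary with the level (i.e.\ use $V_{\varepsilon_j}$ with $\varepsilon_j\downarrow 0$) and allocate a radial distance to each level as above; that is essentially Aikawa's box argument rewritten, not a simplification.
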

\begin{proof}
Since $B_{U}(\xi,A_{1}r)\neq U$, we can find $\xi_{r}\in U$ by Lemma \ref{l-pick2}. Recall that in the proof of Lemma \ref{l-pick2}, $\xi_{r}$ is defined as the intersection of an $A$-uniform curve $\gamma$ and $\partial B(\xi,A_{1}r)$. We choose $\xi_{r}^{\prime}$ as $\gamma\cap \partial B(\xi_{r},A^{-1}r)$. Define \[g^{\prime}(z)=g_{B_U\left(\xi, A_2 r\right)}\left(z, \xi_r\right) \text { for } z \in B_U\left(\xi, A_2 r\right).\] Set $s=A^{-1}r$. Note that $B_U\left(\xi_r, s\right)=B\left(\xi_r, s\right) \subset U$. Since $B\left(\xi_r, s\right) \subset$ $B_U\left(\xi, A_2 r\right) \backslash B_U(\xi, 2 r)$, by applying the maximum principle in Theorem \ref{t-g-ex}, \[g^{\prime}(y) \leq \sup _{z \in \partial B\left(\xi_r, s\right)} g^{\prime}(z) \text { for all } y \in B_U(\xi, 2 r).\] As $\delta_U\left(\xi_r\right) \geq 2 A^{-1} (1+2K)r$, we have $B(\xi_{r},(1+2K)s)\subset B_{U}(\xi,A_{2}r)$ and therefore by Theorem \ref{t-g-cp} we have\begin{align}
    \max_{z \in \partial B\left(\xi_r, s\right)} g^{\prime}(z) & \leq \oldconstant{c1} \min_{z \in \partial B\left(\xi_r, s\right)} g^{\prime}(z)\label{e-2bp}\\ &\leq \oldconstant{c1} g^{\prime}\left(\xi_r^{\prime}\right).
    \end{align} Hence for $\varepsilon_{1}=\exp(-1)/(2\oldconstant{c1})>0$, there hold \[\varepsilon_1 \frac{g^{\prime}(y)}{g^{\prime}\left(\xi_r^{\prime}\right)}< \exp (-1), \quad \forall y \in B_U(\xi, 2 r).\]
With the aid of Lemma \ref{l-hc-log} and Theorem \ref{t-g-cp}, we can use the box argument as in \cite[Lemma 5.5]{BM19} and \cite[Lemma 5.3]{Lie15}.
For all non-negative integers $j$, define
\[    U_j:=\left\{x \in B_U\left(\xi, A_2 r\right): \exp\left(-2^{j+1}\right) \leq \varepsilon_1 \frac{g^{\prime}(x)}{g^{\prime}\left(\xi_r^{\prime}\right)}<\exp\left(-2^j\right)\right\},\, j\geq0,\]
so that $B_U(\xi, 2 r)=\bigcup_{j \geq 0} U_j \cap B_U(\xi, 2 r)$. Set $V_j=\bigcup_{k \geq j} U_k$. We claim that there exist $\newconstant\label{c5} \in(0, \infty)$ such that for all $j\geq0$ we have
\begin{equation}\label{e-hm-clm}
    w_{N_{{K}},\eta}\left(V_j \cap B_U(\xi, 2 r)\right) \leq 54(2\oldconstant{c1}^{2})^{-1/\oldconstant{c5}}\exp(\oldconstant{c5}^{-1})A\exp\left(-\frac{2^{j}}{\oldconstant{c5}}\right)r.
\end{equation} Let $x$ be an arbitrary point in $V_j \cap B_U(\xi, 2 r)$. By Lemma \ref{l-hc-log} there exist $\widetilde{\xi_{r}}\in \partial B(\xi_{r},A^{-1}r)$ and a $\delta_{\mathrm{H}}^{-1}$-Harnack chain of balls in $B_U\left(\xi, A_2 r\right) \backslash\left\{\xi_r\right\}$ from $\widetilde{\xi_{r}}$ to $x$, with length at most \[C(A,\delta_{\mathrm{H}}^{-1})+C(A,\delta_{\mathrm{H}}^{-1})\log\left(1+\frac{r}{\delta_{U}(x)}\right).\] Hence, by \eqref{e-hccp} and the fact that $2r/\delta_{U}(x)\geq1$ we conclude that  \begin{equation}\label{e-hmkm-c}
    \frac{g^{\prime}(x)}{g^{\prime}(\widetilde{\xi_{r}})}\geq 3^{-2\oldconstant{c5}}\left(\frac{\delta_{U}(x)}{r}\right)^{\oldconstant{c5}},
\end{equation}
where  $\oldconstant{c5}=C(A,\delta_{\mathrm{H}}^{-1})\log C_{\mathrm{H}}$.
By the definition of $V_{j}$,
\begin{align*}
    \exp\left(-2^j\right)>\frac{\exp(-1)}{2\oldconstant{c1}} \frac{g^{\prime}(x)}{g^{\prime}\left(\xi_r^{\prime}\right)} &\geq \frac{\exp(-1)}{2\oldconstant{c1}^{2}}  \frac{g^{\prime}(x)}{g^{\prime}(\widetilde{\xi_{r}})} \ \text{(by \eqref{e-2bp})}\\ &\geq \frac{\exp(-1)}{2\oldconstant{c1}^{2}3^{2\oldconstant{c5}}}\left(\frac{\delta_{U}(x)}{r}\right)^{\oldconstant{c5}} \ \text{(by \eqref{e-hmkm-c})},
\end{align*}
which implies \[\delta_{U}(x)\leq 9(2\oldconstant{c1}^{2})^{-1/\oldconstant{c5}}\exp(\oldconstant{c5}^{-1})\exp\left(-\frac{2^{j}}{\oldconstant{c5}}\right)r.\]
Therefore, we have for $j\geq 0$, \[V_{j}\cap B_{U}(\xi,2r)\subset\left\{x\in U: \delta_{U}(x)\leq9(2\oldconstant{c1}^{2})^{-1/\oldconstant{c5}}\exp(\oldconstant{c5}^{-1})\exp\left(-\frac{2^{j}}{\oldconstant{c5}}\right)r\right\}\]
which implies \eqref{e-hm-clm} by Lemma \ref{l-capw}.

Let $R_{0}=2r$ and for $j\geq1$, \[R_{j}=\left(2-\frac{6}{\pi^{2}}\sum_{k=1}^{j}\frac{1}{k^{2}}\right)r.\] Then $r<R_{j+1}<R_{j}<R_{0}=2r$ and $R_{j}\searrow r$. Let $\omega_{0}(\cdot)=\omega(\cdot,U\cap\partial B_{U}(\xi,2r),B_{U}(\xi,2r))$ and \[d_{j}=\begin{dcases}
    \sup_{x\in (U_{j}\cap B_{U}(\xi,R_{j}))\setminus\mathcal{N}}\frac{g^{\prime}(\xi_{r}^{\prime})\omega_{0}(x)}{g^{\prime}(x)},\quad & \text{if }U_{j}\cap B_{U}(\xi,R_{j})\neq\emptyset,\\0, \quad & \text{if }U_{j}\cap B_{U}(\xi,R_{j})=\emptyset.
\end{dcases}\] Here $\mathcal{N}$ is a $\mathcal{E}$-polar set which will be determined below. It suffices to show that $\sup_{j\geq0}d_{j}\leq\oldconstant{c6}<\infty$.  The proof is essentially the same as in \cite[Lemma 5.3]{Lie15}. For the reader’s convenience, we present the proof here. 

We proceed by iteration. Since $\omega_{0}\leq1$ on $B_{U}(\xi,2r)$. By definition of $U_{j}$ we have \[d_{0}\leq \exp(1)/(2\oldconstant{c1}) \text{ and } d_{1}\leq \exp(3)/(2\oldconstant{c1}).\]
Let $j\geq 2$ and let $x\in U_{j}\cap B_{U}(\xi,R_{j})$. We have \[\overline{V_{j}\cap B_{U}(\xi,R_{j-1})}\cap (U\cap \partial B_{U}(\xi,2r))=\emptyset,\] and \[\partial(V_{j}\cap B_{U}(\xi,R_{j-1}))\cap B_{U}(\xi,2r)=\left(U_{j-1}\cap\partial V_{j}\cap\overline{B_{U}(\xi,R_{j-1})}\right)\cup \left(V_{j}\cap \partial B_{U}(\xi,R_{j-1})\right)\]
where the two sets on right hand side are disjoint. By definition of $d_{j-1}$ and continuity of $\omega_{0}$ and $g^{\prime}$,
\[\omega_{0}(x)\leq d_{j-1}\frac{g^{\prime}(x)}{g^{\prime}(\xi_{r}^{\prime})}\ \text{ on }U_{j-1}\cap\partial V_{j}\cap\overline{B_{U}(\xi,R_{j-1})}.\] In view of the mean-value property of $\omega_{0}$ as in \eqref{e-hm-mp}, for $x\in (V_{j}\cap B_{U}(\xi,R_{j-1}))\setminus \mathcal{N}_{j}$ where $N_{j}$ is a $\mathcal{E}$-polar set, 
\begin{align}\label{e-hkmk-w}
    \omega_{0}(x)&=\left(\int_{U_{j-1}\cap\partial V_{j}\cap\overline{B_{U}(\xi,R_{j-1})}}+\int_{V_{j}\cap \partial B_{U}(\xi,R_{j-1})}\right)\omega_{0}(z) \omega(x, \dif z, V_{j}\cap B_{U}(\xi,R_{j-1}))\\&\leq d_{j-1}\frac{g^{\prime}(x)}{g^{\prime}(\xi_{r}^{\prime})}+\omega(x, V_{j}\cap \partial B_{U}(\xi,R_{j-1}), V_{j}\cap B_{U}(\xi,R_{j-1})).
\end{align}
To estimate the second term in \eqref{e-hkmk-w}, we notice that \[B_{U}(\xi,2r)\cap B(x,R_{j-1}-R_{j})\subset B_{U}(\xi,R_{j-1})\] and apply domain monotonicity of harmonic measure \eqref{e-hm-dm},
\begin{align}\label{e-hkmk-x}
    &\omega(x, V_{j}\cap \partial B_{U}(\xi,R_{j-1}), V_{j}\cap B_{U}(\xi,R_{j-1}))\\ \leq\ & \omega(x, V_{j}\cap \partial\left(B_{U}(\xi,2r)\cap B(x,R_{j-1}-R_{j})\right), V_{j}\cap B_{U}(\xi,2r)\cap B(x,R_{j-1}-R_{j}))\\ \leq\ & \omega(x, V_{j}\cap B_{U}(\xi,2r)\cap \partial B(x,R_{j-1}-R_{j}), V_{j}\cap B_{U}(\xi,2r)\cap B(x,R_{j-1}-R_{j})) \\ \leq\ & \exp\left(2-\frac{a_{1}\left(R_{j-1}-R_{j}\right)}{w_{N_{K},\eta}(V_{j}\cap B_{U}(\xi,2r))}\right)  \ \text{(by Lemma \ref{l-hm-cw})} \\ \leq\ & \exp\left(2-\frac{a_{1}j^{-2}\exp\left({2^{j}}/{\oldconstant{c5}}\right)}{9\pi^{2}(2\oldconstant{c1}^{2})^{-1/\oldconstant{c5}}\exp(\oldconstant{c5}^{-1})A}\right)  \ \text{(by \eqref{e-hm-clm})},
\end{align}
where the second inequality uses the fact that \[\partial\left(B_{U}(\xi,2r)\cap B(x,R_{j-1}-R_{j})\right)\subset B_{U}(\xi,2r)\cap \partial B(x,R_{j-1}-R_{j}).\] Moreover, since $x\in U_{j}\cap B_{U}(\xi,R_{j})$, we have by definition of $U_{j}$ that \[1\leq (2\exp(1)\oldconstant{c1})^{-1}\frac{g^{\prime}(x)}{g^{\prime}(\xi_{r}^{\prime})}\exp(2^{j+1}).\]Combining \eqref{e-hkmk-w} and \eqref{e-hkmk-x} we have \begin{equation*}
    \omega_{0}(x)\leq \left(d_{j-1}+(2\exp(1)\oldconstant{c1})^{-1}\exp\left(2+2^{j+1}-\frac{a_{1}j^{-2}\exp\left({2^{j}}/{\oldconstant{c5}}\right)}{9\pi^{2}(2\oldconstant{c1}^{2})^{-1/\oldconstant{c5}}\exp(\oldconstant{c5}^{-1})A}\right)\right)\frac{g^{\prime}(x)}{g^{\prime}(\xi_{r}^{\prime})}.
\end{equation*}
Dividing both sides by $\frac{g^{\prime}(x)}{g^{\prime}(\xi_{r}^{\prime})}$ and taking supremum over all $x\in (U_{j}\cap B_{U}(\xi,R_{j}))\setminus \mathcal{N}$, where $\mathcal{N}:=\bigcup_{j}\mathcal{N}_{j}$, we conclude that \[d_{j}-d_{j-1}\leq\frac{\exp(1)}{2\oldconstant{c1}}\exp\left(2^{j+1}-\frac{a_{1}j^{-2}\exp\left({2^{j}}/{\oldconstant{c5}}\right)}{9\pi^{2}(2\oldconstant{c1}^{2})^{-1/\oldconstant{c5}}\exp(\oldconstant{c5}^{-1})A}\right).\]
Thus for all $j\geq2$, \begin{align*}
    d_{j}&\leq d_{1}+\frac{\exp(1)}{2\oldconstant{c1}}\sum_{j=2}^{\infty}\exp\left(2^{j+1}-\frac{a_{1}j^{-2}\exp\left({2^{j}}/{\oldconstant{c5}}\right)}{9\pi^{2}(2\oldconstant{c1}^{2})^{-1/\oldconstant{c5}}\exp(\oldconstant{c5}^{-1})A}\right)\\ &\leq \frac{\exp(3)}{2\oldconstant{c1}}+\frac{\exp(1)}{2\oldconstant{c1}}\sum_{j=2}^{\infty}\exp\left(2^{j+1}-\frac{a_{1}j^{-2}\exp\left({2^{j}}/{\oldconstant{c5}}\right)}{9\pi^{2}(2\oldconstant{c1}^{2})^{-1/\oldconstant{c5}}\exp(\oldconstant{c5}^{-1})A}\right):=\oldconstant{c6}<\infty.
\end{align*}
and we complete the proof.
\end{proof}
We define 
\begin{equation*}
  A_{3}=\max(A_{2},2+12A^{2},8) .
\end{equation*}

\begin{notation}\label{n-4p}
    For each fixed $\xi\in\partial U$, $0<r< (2A_{3})^{-1}\diam(U,d)$, we denote $x_{\xi}^{\star}\in U\cap \partial B_{U}(\xi,r)$ and $y_{\xi}^{\star}\in U\cap \partial B_{U}(\xi,A_{3}r)$ given by Lemma \ref{l-pick2} such that $\delta_{U}(x_{\xi}^{\star})\geq (2A)^{-1}r$ and $\delta_{U}(y_{\xi}^{\star})\geq (2A)^{-1}A_{3}r$. Denote $\gamma_{\xi}$ to be an $A$-uniform curve from $y_{\xi}^{\star}$ to $x_{\xi}^{\star}$, and $z_{\xi}^{\star}$ to be the last point of this curve which is on $\partial B(y_{\xi}^{\star},A^{-1}r)$. We set \[F(\xi)=B_{U}(\xi,(A_{3}+3)r)\setminus B_{U}(\xi,(A_{3}-3)r).\]  
\end{notation}
Notice that by Lemma \ref{l-dist},\begin{equation}
\delta_{U}(z)\geq(2A)^{-2}r\ \text{for all } z\in\gamma_{\xi}\text{ and } \delta_{U}(y_{\xi}^{\star})\geq (2A)^{-1}A_{3}r>2r, 
\end{equation}
so that $B(y_{\xi}^{\star},2r)\subset U$, and \begin{equation}\label{e-diam}
    \diam(\gamma_{\xi})\leq Ad(x_{\xi}^{\star},y_{\xi}^{\star})\leq 2A_{3}Ar.
\end{equation}
\noindent
\begin{figure}
\trimbox{0cm 4.5cm 0cm 0cm}{
\begin{tikzpicture}
\coordinate [label=below:$\xi$] (origin) at (0,0);
\node at (6.5,3)  {$U$};
\node at (7.8,0)  {$\partial U$};
\draw[name path=boundary, thick] plot [smooth, tension=1] coordinates{(-7.4,0) (-7.2,0.15) (-6.8, 0)  (-5.9, 0.05)  (-5, 0) (-4,0.1) (-3.5, 0.2) (-2.6, 0) (-2,0.1) (-1.6, 0.15)(-1.2,0.13) (-0.8,0.07) (0,0) (0.7,-0.05) (1.1, 0.01) (1.8, 0.08) (2.2, 0.04) (3, 0) (4.2,0.1) (5.5, -0.1)  (6.3, 0.13) (6.9, 0) (7.2, -0.07)};
\filldraw[color=black, fill=black, very thick] (origin) circle (0.5pt);
\def\radiusa{0.6}
\path[name path=circle0.6] (origin) circle (\radiusa); 
\path [name intersections={of=boundary and circle0.6}] coordinate (l1) at (intersection-1) coordinate [label=below:$r$] (r1) at (intersection-2);
 \begin{scope}
        \pic [draw, angle radius=\radiusa cm] {angle=r1--origin--l1};
    \end{scope}
 \coordinate [label=above:$\scriptstyle{x^{\star}_{\xi}}$] (xxi) at (50:\radiusa);
 \filldraw[color=black, fill=black, very thick] (xxi) circle (0.3pt);
\def\radiusb{1.2}
\path[name path=circle1.2] (origin) circle (\radiusb); 
\path [name intersections={of=boundary and circle1.2}] coordinate (l2) at (intersection-1) coordinate [label=below:$2r$] (r2) at (intersection-2);
 \begin{scope}
        \pic [draw, angle radius=\radiusb cm] {angle=r2--origin--l2};
    \end{scope}
\def\radiusc{2.4}
\path[name path=circle2.4] (origin) circle (\radiusc); 
\path [name intersections={of=boundary and circle2.4}] coordinate (l3) at (intersection-1) coordinate [label=below:$A_{1}r$] (r3) at (intersection-2);
 \begin{scope}
        \pic [draw, angle radius=\radiusc cm] {angle=r3--origin--l3};
    \end{scope}
 \coordinate [label=above:$\scriptstyle{\xi_{r}}$] (xir) at (70:\radiusc);
 \filldraw[color=black, fill=black, very thick] (xir) circle (0.3pt);
\def\radiusd{3.8}
\path[name path=circle4] (origin) circle (\radiusd); 
\path [name intersections={of=boundary and circle4}] coordinate (l4) at (intersection-1) coordinate [label=below:$A_{3}r$] (r4) at (intersection-2);
 \begin{scope}
        \pic [draw, angle radius=\radiusd cm] {angle=r4--origin--l4};
    \end{scope}
 \coordinate [label=above:$\scriptstyle{y^{\star}_{\xi}}$] (yxi) at (120:\radiusd);
 \filldraw[color=black, fill=black, very thick] (yxi) circle (0.3pt);
\def\radiuse{5}
\path[name path=circle6] (origin) circle (\radiuse); 
\path [name intersections={of=boundary and circle6}] coordinate (l5) at (intersection-1) coordinate [label=below:$A_{4}r$] (r5) at (intersection-2);
 \begin{scope}
        \pic [draw, angle radius=\radiuse cm] {angle=r5--origin--l5};
    \end{scope}
 \draw [dashed, radius=0.45, name path=circleyxi] (yxi) circle;
 \draw[name path=uni-curv] plot [smooth, tension=1] coordinates{(yxi) (-0.5, 2) (xxi)};
 \path [name intersections={of=uni-curv and circleyxi}] coordinate [label=below:$\scriptstyle{z^{\star}_{\xi}}$] (zxi) at (intersection-1)  ; 
 \filldraw[color=black, fill=black, very thick] (zxi) circle (0.3pt);
 
\draw [dashed, radius=0.45, name path=circlexir] (xir) circle;
\tikzset{shift={(xir)}};
\coordinate [label=above: $\scriptstyle{\xi_{r}^{\prime}}$] (xirp) at (60: 0.45);
\filldraw[color=black, fill=black, very thick] (xirp) circle (0.3pt);
\end{tikzpicture}}
\caption{Five specified points $x_{\xi}^{\star},y_{\xi}^{\star},z_{\xi}^{\star},\xi_{r}, \xi_{r}^{\prime}$.}
\end{figure}
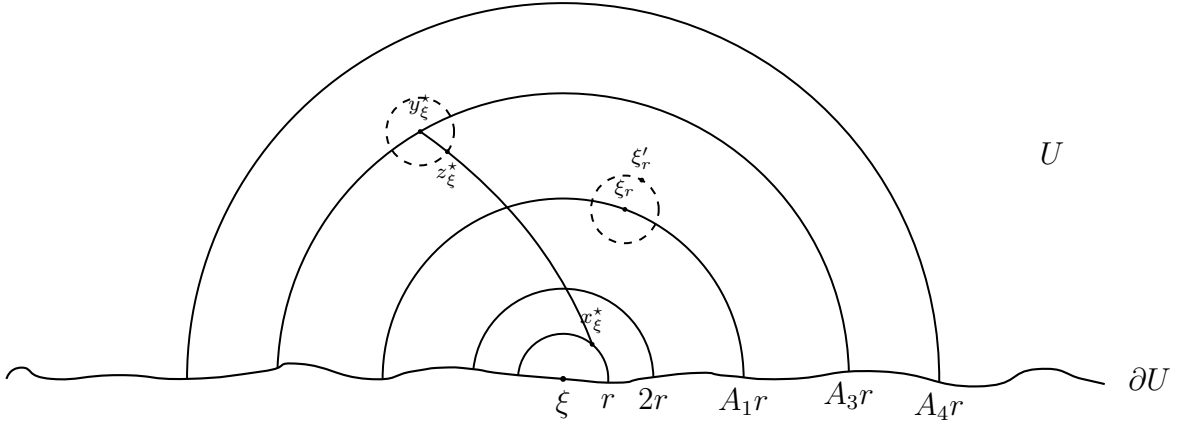

Define
\[A_{4}=A_{3}+2AA_{3}+(1+16K)A^{-1}.\] {The following lemma, which compares Green functions at some specified points is useful to give estimates on a region. Note that  \cite[Lemma 5.10 and Lemma 5.11]{BM19} also use such estimates without proof, since in the context of \cite{BM19}, such estimates naturally hold as the space is assumed to be geodesic, allowing any two points to be connected by a rectifiable curve, hence are connected through a Harnack chain with small radius. However, in the current framework, these estimates are not apparent.}
\begin{lemma}\label{l-g-4po}
    There exists  $\newconstant \label{c10}$ such that for any $\xi\in\partial U$, $0<r< (2A_{3})^{-1}\diam(U,d)$, $D:=B_{U}(\xi,A_{4}r)$, and any two points $\eta, \eta^{\prime}\in B_{U}(\xi,A_{2}r)$ with $\eta\in U\cap \partial B(\xi,A_{1}r)$, $\delta_{U}(\eta)\geq 2(1+16K)A^{-1}r$ and $\eta^{\prime}\in\partial B(\eta,A^{-1}r)$, we have
   \begin{equation}\label{e-4po-1}
        \oldconstant{c10}^{-1}g_{B_U\left(\xi, A_2 r\right)}\left(\eta^{\prime}, \eta\right)\leq g_{D}(x_{\xi}^{\star},y_{\xi}^{\star})\leq \oldconstant{c10}g_{B_U\left(\xi, A_2 r\right)}\left(\eta^{\prime}, \eta\right)
    \end{equation}
    and 
    \begin{equation}\label{e-4po-2}
        g_{B_U\left(\xi, A_2 r\right)}\left(x, \eta\right)\leq \oldconstant{c10}g_{D}(x,y_{\xi}^{\star})\quad\text{for all }x\in B_{U}(\xi,2r).
    \end{equation}
\end{lemma}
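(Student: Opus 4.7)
The plan for part (1) is to reduce both $g_D(x_\xi^\star, y_\xi^\star)$ and $g_{B_U(\xi, A_2 r)}(\eta', \eta)$ to Green function values on a single auxiliary domain, and then apply the point-transfer estimate Theorem \ref{t-g-cp}(ii). The first step is to apply Proposition \ref{p-hc-p} to the sub-arc of $\gamma_\xi$ from $x_\xi^\star$ to $z_\xi^\star$: this sub-arc has $\delta_U \geq (2A)^{-2} r$ by Lemma \ref{l-dist} and, by the very choice of $z_\xi^\star$ as the last intersection of $\gamma_\xi$ with $\partial B(y_\xi^\star, A^{-1}r)$, it stays outside $B(y_\xi^\star, A^{-1}r)$, so it carries a Harnack chain of length bounded only in terms of $A$, $K$ and the doubling constants. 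Applied via \eqref{e-hccp} to the $\mathcal{E}$-harmonic function $y \mapsto g_D(y, y_\xi^\star)$ on $D \setminus \{y_\xi^\star\}$, this yields $g_D(x_\xi^\star, y_\xi^\star) \asymp g_D(z_\xi^\star, y_\xi^\star)$.

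For the main comparison I introduce the ball $D^{\#} := B(\xi, (3+2K)(A_3 + A^{-1}) r)$; the hypothesis $r < C_0^{-1} \diam(U, d)$ with $C_0 = 10^8 K^3 A^5$ keeps the radius small enough that the complement is non $\mathcal{E}$-polar. Theorem \ref{t-g-cp}(iv) applied at $x_0 = y_\xi^\star$, with inner radius $r^\star \asymp A_3 r / A$ so that $B(y_\xi^\star, r^\star) \subset D$ (using $\delta_U(y_\xi^\star) \geq (2A)^{-1} A_3 r$ and $A_4 - A_3 \geq 2 A A_3$), and an enlargement factor chosen so $B(y_\xi^\star, A_1^{(\mathrm{iv})} r^\star) \supset D^{\#}$, sandwiches $g_D$ and $g_{D^{\#}}$ between the two concentric balls and gives $g_D(z_\xi^\star, y_\xi^\star) \asymp g_{D^{\#}}(z_\xi^\star, y_\xi^\star)$. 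The analogous step at $x_0 = \eta$, this time using $\delta_U(\eta) \geq 2(1+16K) A^{-1} r$, gives $g_{B_U(\xi, A_2 r)}(\eta', \eta) \asymp g_{D^{\#}}(\eta', \eta)$. Finally, all four points $y_\xi^\star, z_\xi^\star, \eta, \eta'$ lie in $B(\xi, (A_3 + A^{-1}) r)$ with $d(y_\xi^\star, z_\xi^\star) = d(\eta, \eta') = A^{-1} r$ bounded below by this ball's radius divided by $A A_3 + 1$, so Theorem \ref{t-g-cp}(ii) applied inside $D^{\#}$ yields $g_{D^{\#}}(y_\xi^\star, z_\xi^\star) \asymp g_{D^{\#}}(\eta, \eta')$; chaining all the equivalences above proves \eqref{e-4po-1}.

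For \eqref{e-4po-2} I start with domain monotonicity: since $B_U(\xi, A_2 r) \subset D$ one has $g_{B_U(\xi, A_2 r)}(x, \eta) \leq g_D(x, \eta)$, and by symmetry of the Green function it suffices to show $g_D(\eta, x) \leq C\, g_D(y_\xi^\star, x)$ for every $x \in B_U(\xi, 2r)$. The plan is to apply \eqref{e-hccp} to the non-negative $\mathcal{E}$-harmonic function $y \mapsto g_D(y, x)$ on $D \setminus \{x\}$ along a Harnack chain from $\eta$ to $y_\xi^\star$ whose length depends only on $A$ and $K$. I would construct this chain from an $A$-uniform curve in $U$ joining $\eta$ and $y_\xi^\star$, which by Lemma \ref{l-dist} has $\delta_U \geq c\, r$ along it for some $c = c(A, K) > 0$ and, by the uniform curve property, diameter at most $A(A_1 + A_3) r$, so it remains in $D$. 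The main obstacle is guaranteeing that the chain avoids $x$ uniformly in $x \in B_U(\xi, 2r)$: in the geodesic setting of \cite{BM19} this is immediate via short rectifiable detours, but here the uniform curve may pass arbitrarily close to $x$ when $x$ is an interior point. I would handle this by a case split on $\delta_U(x)$—when $\delta_U(x) < c r / 2$ the curve is automatically at distance $\geq c r / 2$ from $x$ so Harnack balls of small radius suffice; when $\delta_U(x) \geq c r / 2$ I would invoke a logarithmic Harnack chain in the spirit of Lemma \ref{l-hc-log} to route around a neighborhood of $x$—and this rerouting is the technically most delicate step of the argument.
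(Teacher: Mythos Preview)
Your argument for \eqref{e-4po-1} is different from the paper's and in some ways cleaner: the paper connects $y_\xi^\star$ to $\eta$ by an $A$-uniform curve $\gamma_\xi'$, introduces auxiliary points $w_\xi^\star \in \gamma_\xi' \cap \partial B(y_\xi^\star, A^{-1}r)$ and $\eta'' \in \gamma_\xi' \cap \partial B(\eta, A^{-1}r)$, and runs Harnack chains along sub-arcs of $\gamma_\xi'$ together with the sphere comparison \eqref{e-hg} to get $g_D(x_\xi^\star, y_\xi^\star) \asymp g_D(\eta, \eta')$ entirely inside $D$; only afterwards does it change domains via Theorem~\ref{t-g-cp}(iv). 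You instead push both quantities to a common large ball $D^\#$ and invoke the four-point estimate Theorem~\ref{t-g-cp}(ii) directly. Your route avoids constructing $\gamma_\xi'$, at the cost of needing $\mathcal{X}\setminus D^\#$ non-$\mathcal{E}$-polar, which the stated hypothesis $r < (2A_3)^{-1}\diam(U,d)$ does not guarantee (you silently strengthen it to $r < C_0^{-1}\diam(U,d)$); this is harmless in the lemma's applications but should be flagged.

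For \eqref{e-4po-2} your case $\delta_U(x) < cr/2$ matches the paper's, but the remaining case has a real gap. You plan to Harnack-chain $g_D(\cdot, x)$ from $\eta$ to $y_\xi^\star$ along a curve that \emph{avoids $x$}, deferring this to a rerouting ``in the spirit of Lemma~\ref{l-hc-log}''. That lemma, however, reroutes around an \emph{endpoint} of the chain, not an interior obstacle, and in a non-geodesic space lacking rectifiable curves there is no evident way to detour a uniform curve around an arbitrary point with uniformly bounded chain length. The paper avoids the difficulty altogether: when $\delta_U(x) \geq (2A^2)^{-1}r$ it does \emph{not} try to join $\eta$ to $y_\xi^\star$ in $D\setminus\{x\}$. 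Instead it takes $A$-uniform curves from $\eta$ to $x$ and from $y_\xi^\star$ to $x$, applies Harnack chains for $g_D(\cdot,\eta)$ and $g_D(\cdot,y_\xi^\star)$ along the sub-arcs lying outside $B(\eta,A^{-1}r)$ and $B(y_\xi^\star,A^{-1}r)$ respectively (these sub-arcs automatically avoid the singularities $\eta$, $y_\xi^\star$), and obtains $g_D(x,\eta)\asymp g_D(\eta',\eta)$ and $g_D(x,y_\xi^\star)\asymp g_D(z_\xi^\star,y_\xi^\star)$. Then the comparability $g_D(\eta',\eta)\asymp g_D(x_\xi^\star,y_\xi^\star)\asymp g_D(z_\xi^\star,y_\xi^\star)$ from part~(1) finishes the job. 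Using $x$ as a hub rather than an obstacle is the missing idea.
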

\begin{proof}
\begin{enumerate}[label=\textup{(\arabic*)},align=left,leftmargin=*,topsep=5pt,parsep=0pt,itemsep=2pt]
        \item For the proof of \eqref{e-4po-1}, let $\gamma_{\xi}^{\prime}$ be an $A$-uniform curve from $y_{\xi}^{\star}$ to $\eta$. By definition of uniform curve, \begin{equation}\label{e-diamp}
            \diam(\gamma_{\xi}^{\prime})\leq Ad(y_{\xi}^{\star},\eta)\leq 2A_{3}Ar,
        \end{equation} and 
        \begin{equation}\label{e-distp}
            \delta_{U}(z)\geq(2A)^{-1}\min(2A^{-1}A_{3}r,2(1+16K)A^{-1}r)\geq A^{-2}r\quad \text{for all } z\in \gamma_{\xi}^{\prime}.
        \end{equation}  Let $w_{\xi}^{\star}$ denote the last point of $\gamma_{\xi}^{\prime}$ which is on $\partial B(y_{\xi}^{\star},A^{-1}r)$ and $\eta^{\prime \prime}$ denote the first point of $\gamma_{\xi}^{\prime}$ which is on $\partial B(\eta,A^{-1}r)$. Choose a $\delta_{\mathrm{H}}^{-1}$-Harnack chain connecting $x_{\xi}^{\star}$ and $z_{\xi}^{\star}$ centred in $\gamma_{\xi}$ with radius $(3A)^{-2}r$. By \eqref{e-diam} and Proposition \ref{p-hc-p}, this Harnack chain is contained in $D \setminus\{y_{\xi}^{\star}\}$ and has length less than $\newconstant\label{c12}:=C_{D}(1+6A_{3}A^{3}\delta_{\mathrm{H}}^{-1})^{\alpha}$. Since $g_{D}(\cdot,y_{\xi}^{\star})$ is $\mathcal{E}$-harmonic in $D\setminus\{y_{\xi}^{\star}\}$, we have by \eqref{e-hccp} that for $\newconstant\label{c13}=\exp(\oldconstant{c12}\log  C_{\mathrm{H}})$,
\begin{equation}\label{e-4po-3}
        \oldconstant{c13}^{-1} g_{D}(x_{\xi}^{\star},y_{\xi}^{\star})\leq g_{D}(z_{\xi}^{\star},y_{\xi}^{\star}) \leq \oldconstant{c13} g_{D}(x_{\xi}^{\star},y_{\xi}^{\star}).
    \end{equation}
    Similarly, we have for some $\newconstant\label{c14}$ that,
     \begin{equation}\label{e-4po-4}
        \oldconstant{c14}^{-1} g_{D}(\eta,\eta^{\prime \prime})\leq g_{D}(\eta,y_{\xi}^{\star}) \leq \oldconstant{c14} g_{D}(\eta,\eta^{\prime \prime}),
    \end{equation} 
    \begin{equation}\label{e-4po-5}
        \oldconstant{c14}^{-1} g_{D}(w_{\xi}^{\star},y_{\xi}^{\star})\leq g_{D}(\eta,y_{\xi}^{\star}) \leq \oldconstant{c14} g_{D}(w_{\xi}^{\star},y_{\xi}^{\star}).
    \end{equation}
As $z_{\xi}^{\star}$ and $w_{\xi}^{\star}$ are both in $\partial B(y_{\xi}^{\star},A^{-1}r)$, and $B(y_{\xi}^{\star},(1+2K)A^{-1}r)\subset B_{U}(\xi,A_{4}r)=D$, we conclude by Theorem \ref{t-g-cp} that \begin{equation} \label{e-4po-6}
       \oldconstant{c1}^{-1} g_{D}(z_{\xi}^{\star},y_{\xi}^{\star})\leq g_{D}(w_{\xi}^{\star},y_{\xi}^{\star})\leq \oldconstant{c1} g_{D}(z_{\xi}^{\star},y_{\xi}^{\star})
    \end{equation}
    Similarly, as $\eta^{\prime}$ and $\eta^{\prime \prime}$ are both in $\partial B(\eta,A^{-1}r)$, and $B(\eta,(1+2K)A^{-1}r)\subset B_{U}(\xi,A_{4}r)=D$, we have \begin{equation}\label{e-4po-7}
        \oldconstant{c1}^{-1} g_{D}(\eta,\eta^{\prime \prime})\leq g_{D}(\eta,\eta^{\prime}) \leq \oldconstant{c1} g_{D}(\eta,\eta^{\prime \prime})
    \end{equation}
    Combining \eqref{e-4po-3}, \eqref{e-4po-4}, \eqref{e-4po-5}, \eqref{e-4po-6} and \eqref{e-4po-7}, we have for $\newconstant\label{c16}=\oldconstant{c13}\oldconstant{c14}^{2}\oldconstant{c1}^{2}$, \begin{equation}\label{e-4po-8}
    \oldconstant{c16}^{-1}g_{D}\left(\eta,\eta^{\prime}\right)\leq g_{D}(x_{\xi}^{\star},y_{\xi}^{\star})\leq \oldconstant{c16}g_{D}\left(\eta,\eta^{\prime}\right).
    \end{equation}
    By the domain monotonicity of Green function and \eqref{e-4po-8}, we have \begin{equation}\label{e-4po-9}
        g_{B_U\left(\xi, A_2 r\right)}\left(\eta,\eta^{\prime}\right)\leq g_{D}\left(\eta,\eta^{\prime}\right)\leq \oldconstant{c16}g_{D}(x_{\xi}^{\star},y_{\xi}^{\star}).
    \end{equation}
Since $B(\eta,16KA^{-1}r)\subset B_U\left(\xi, A_2 r\right)$, by Theorem \ref{t-g-cp} and domain monotonicity of Green function again we have \begin{align}\label{e-4po-10}
       g_{D}(x_{\xi}^{\star},y_{\xi}^{\star})&\leq \oldconstant{c16} g_{D}\left(\eta,\eta^{\prime}\right) \\ &\leq \oldconstant{c16} g_{B(\eta,(A_{2}+A_{4})r)}\left(\eta,\eta^{\prime}\right) \\ &\leq \oldconstant{c16} \oldconstant{c4} g_{B(\eta,16KA^{-1}r)}\left(\eta,\eta^{\prime}\right) \\ & \leq \oldconstant{c16}\oldconstant{c4} g_{B_U\left(\xi, A_2 r\right)}\left(\eta,\eta^{\prime}\right).
       \end{align}
       Combining \eqref{e-4po-9} and \eqref{e-4po-10}, we have \eqref{e-4po-1}.
       \item Since $g_{B_U\left(\xi, A_2 r\right)}\left(x, \eta \right)\leq g_{D}(x,\eta)$ by domain monotonicity, we only need to prove that $g_{D}(x,\eta)\leq \oldconstant{c10} g_{D}(x,y_{\xi}^{\star})$.
       \begin{enumerate}[label=\textup{(\roman*)},align=left,leftmargin=*,topsep=5pt,parsep=0pt,itemsep=2pt]
           \item If $x\in B_{U}(\xi,2r)$ and $\delta_{U}(x)<(2A^{2})^{-1}r$. We can pick a $\delta_{\mathrm{H}}^{-1}$-Harnack chain from $y_{\xi}^{\star}$ to $\eta$ centred in $\gamma_{\xi}^{\prime}$ with radius $(2A^{2})^{-1}r$; this chain is contained in $D\setminus\{x\}$ since $d(x,z)\geq \delta_{U}(z)-\delta_{U}(x)>A^{-2}r-(2A^{2})^{-1}r=(2A^{2})^{-1}r$ for all $z\in\gamma_{\xi}^{\prime}$, and have length less than $\newconstant\label{c18}:=C_{D}(1+4\delta_{\mathrm{H}}^{-1}A_{3}A^{3})^{\alpha}$, by Proposition \ref{p-hc-p}. As $g_{D}(x,\cdot)$ is $\mathcal{E}$-harmonic in $D\setminus\{x\}$ we conclude that $g_{D}(x,\eta)\leq \exp(\oldconstant{c18}\log C_{\mathrm{H}}) g_{D}(x,y_{\xi}^{\star})$. 
           \item If $x\in B_{U}(\xi,2r)$ and $\delta_{U}(x)\geq(2A^{2})^{-1}r$. By joining an $A$-uniform curve from $\eta$ to $x$ and a similar argument as in the proof of \eqref{e-4po-3}, we conclude that for some constant $\newconstant\label{c19}$ and $\newconstant\label{c20}$, \[ g_{D}(x,\eta)\leq \oldconstant{c19} g_{D}(\eta,\eta^{\prime}) \text{ and } g_{D}(x,y_{\xi}^{\star})\geq \oldconstant{c20} g_{D}(z_{\xi}^{\star},y_{\xi}^{\star})\]
           From \eqref{e-4po-8} we have $g_{D}(x,\eta)\leq \oldconstant{c16} \oldconstant{c19} \oldconstant{c20}^{-1}g_{D}(x,y_{\xi}^{\star})$.
       \end{enumerate}
 Combining the above discussion, \eqref{e-4po-1} and \eqref{e-4po-2} hold with \[\oldconstant{c10}=\max\left( \oldconstant{c16} \oldconstant{c19} \oldconstant{c20}^{-1},\exp(\oldconstant{c18}\log C_{\mathrm{H}}), \oldconstant{c16}\oldconstant{c4} \right).\]
    \end{enumerate}
\end{proof}

{The rest of the proof is the same as \cite[Lemma 5.8-5.11]{BM19} with suitable modifications of the chain argument.} The basic idea is to first obtain BHP for Green functions, then represent general harmonic functions by the formula in Proposition \ref{p-blyg}. 

\begin{lemma}[See{\cite[Lemma 5.8]{BM19}}]\label{l-g-cp-bd}
    Let $\xi\in\partial U$, $0<r< C_{0}^{-1}\diam(U,d)$ and $D=B_{U}(\xi,A_{4}r)$. There exists $\newconstant\label{c7}$ such that 
    \begin{equation}\label{e-g-cp-bd}
    \oldconstant{c7}^{-1} g_{D}(x,y)\leq\frac{g_{D}(x_{\xi}^{\star},y)}{g_{D}(x_{\xi}^{\star},y_{\xi}^{\star})} g_{D}(x,y_{\xi}^{\star})\leq \oldconstant{c7}g_{D}(x,y)    
    \end{equation}
    for all $x\in B_{U}(\xi,r)$ and $y\in U\cap\partial B_{U}(\xi,A_{3}r)$ with $\delta_{U}(y)\geq (24A^{3})^{-1}r$.
\end{lemma}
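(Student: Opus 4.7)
This is a boundary Harnack principle for $g_D$: the ratio $\rho(x) := g_D(x,y)/g_D(x,y_\xi^\star)$ of two positive $\mathcal{E}$-harmonic functions on $B_U(\xi,r)$ vanishing along $\partial U$ is comparable to its value $R := \rho(x_\xi^\star)$ at the reference point. I follow the Aikawa reduction employed in \cite[Lemma 5.8]{BM19}: localize the comparison to the sphere $\partial B(y_\xi^\star, A^{-1}r)$ and then propagate via Lemma~\ref{l-g-mp2}.

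\textbf{Step 1 (sphere comparison).} I first show that, for all $z \in \partial B(y_\xi^\star, A^{-1}r)$,
\[
\frac{g_D(y,z)}{g_D(y_\xi^\star, z)} \asymp \frac{g_D(y, z_\xi^\star)}{g_D(y_\xi^\star, z_\xi^\star)}.
\]
Theorem~\ref{t-g-cp}(i) with center $y_\xi^\star$ gives $g_D(y_\xi^\star, z) \asymp g_D(y_\xi^\star, z_\xi^\star)$. For $g_D(y, z)$ I split on whether $y$ lies in $B(y_\xi^\star, 2(3+2K)A^{-1}r)$: in the far case $g_D(y,\cdot)$ is $\mathcal{E}$-harmonic on a sufficiently enlarged concentric ball so Theorem~\ref{t-g-cp}(ii) yields $g_D(y, z) \asymp g_D(y, z_\xi^\star)$; in the close case Lemma~\ref{l-dist} gives a uniform curve from $y$ to $y_\xi^\star$ of diameter $\lesssim r$ staying in $\{\delta_U \geq (48A^4)^{-1}r\}$, along which Proposition~\ref{p-hc-p} constructs a $\delta_{\mathrm{H}}^{-1}$-Harnack chain of bounded length for $g_D(\cdot, z)$ (with Harnack balls chosen small enough to avoid $z$), giving $g_D(y, z) \asymp g_D(y_\xi^\star, z)$; moreover $R \asymp 1$ by the same chain applied at $x_\xi^\star$.

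\textbf{Step 2 (reduction to $R$).} Next I show $g_D(y, z_\xi^\star)/g_D(y_\xi^\star, z_\xi^\star) \asymp R$. By the symmetry of $g_D$ this reduces to comparing $g_D(\cdot, y)$ at $z_\xi^\star$ versus $x_\xi^\star$, and likewise for $g_D(\cdot, y_\xi^\star)$, through a $\delta_{\mathrm{H}}^{-1}$-Harnack chain along the subcurve of $\gamma_\xi$ from $z_\xi^\star$ to $x_\xi^\star$. By the definition of $z_\xi^\star$ this subcurve lies in $\mathcal{X} \setminus B(y_\xi^\star, A^{-1}r)$, so the chain stays at distance $\geq A^{-1}r$ from $y_\xi^\star$. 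For the singularity at $y$: Lemma~\ref{l-dist} gives $\delta_U \geq (4A^2)^{-1}r$ along $\gamma_\xi$, and when $\delta_U(y) < (4A^2)^{-1}r/2$ a triangle inequality through a near-boundary point of $y$ yields $d(y,\gamma_\xi) \geq c\,r$; otherwise $y$ is a deep-interior point at distance $\asymp r$ from $\partial U$, and a local detour around $y$ through a few balls of radius $\asymp \delta_U(y)$ keeps the chain away from $y$ while adding only constantly many balls. Proposition~\ref{p-hc-p} bounds the total chain length, and \eqref{e-hccp} yields the claimed comparisons.

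\textbf{Step 3 (propagation and main obstacle).} Combining Steps 1 and 2 gives $c^{-1}R\, g_D(y_\xi^\star, z) \leq g_D(y, z) \leq cR\, g_D(y_\xi^\star, z)$ on $\partial B(y_\xi^\star, A^{-1}r)$. Lemma~\ref{l-g-mp2} with $y^\star = y_\xi^\star$ extends the lower inequality to $x \in D \setminus (\{y\} \cup B(y_\xi^\star, A^{-1}r))$, which contains $B_U(\xi,r)$ because $d(x,y_\xi^\star) \geq A_3 r - r > A^{-1}r$. For the upper inequality I apply Lemma~\ref{l-g-mp2} with the roles of $y$ and $y_\xi^\star$ reversed and the smaller ball $B(y, r')$ of radius $r' \lesssim (24A^3(1+2K))^{-1}r$ (admissible since $\delta_U(y) \geq (24A^3)^{-1}r$), verifying the corresponding sphere estimate by the argument of Step~1 at $y$. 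The chief technical obstacle, relative to the geodesic framework of \cite{BM19}, is to keep every Harnack chain in Steps 1 and 2 at distance $\gtrsim r$ from both singularities $y$ and $y_\xi^\star$ while controlling its length; this exploits the uniform-curve estimate Lemma~\ref{l-dist} and the quantitative hypothesis $\delta_U(y) \geq (24A^3)^{-1}r$, which serve as substitutes for the existence of geodesics.
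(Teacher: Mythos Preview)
Your approach is substantially more elaborate than the paper's, and Step~2 contains a genuine gap. You are essentially running the strategy of Lemma~\ref{l-g-cp-l} (sphere comparison at $y_\xi^\star$, then propagation via Lemma~\ref{l-g-mp2}), but that lemma depends crucially on the hypothesis $\delta_U(y) < (24A^3)^{-1}r$: this smallness is exactly what forces $d(y,\gamma_\xi) \gtrsim r$ via the triangle inequality with $\delta_U$, so the Harnack chain along $\gamma_\xi$ avoids the singularity at $y$. Here you have the opposite hypothesis $\delta_U(y) \geq (24A^3)^{-1}r$, and your ``local detour around $y$ through a few balls of radius $\asymp \delta_U(y)$'' is not justified in a non-geodesic metric doubling space: annuli around $y$ need not be chain-connected, and neither $\mathrm{RBC}(K)$ nor the uniform-domain condition supplies such a detour. (There are smaller issues as well: Theorem~\ref{t-g-cp}(ii) as stated does not give $g_D(y,z)\asymp g_D(y,z_\xi^\star)$ in your far case, since the ball required to contain $y,z,z_\xi^\star$ would be too large relative to $D$; and in your close case the chain for $g_D(\cdot,z)$ from $y$ to $y_\xi^\star$ is not shown to avoid $z$.)

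The paper bypasses all of this by chaining in the other variable. Fix $x \in B_U(\xi,r)$ and set $u_1(y)=g_D(x,y)$ and $v_1(y)=g_D(x_\xi^\star,y)\,g_D(x,y_\xi^\star)/g_D(x_\xi^\star,y_\xi^\star)$; both are $\mathcal{E}$-harmonic in $D\setminus\{x,x_\xi^\star\}$ and agree at $y_\xi^\star$. Now take an $A$-uniform curve $\gamma$ from $y$ to $y_\xi^\star$. Since both endpoints lie on $\partial B(\xi,A_3 r)$, Lemma~\ref{l-curv-b} gives $\gamma \subset U\setminus B_U(\xi,A_3(A+1)^{-1}r)\subset U\setminus B_U(\xi,2r)$, so $\gamma$ automatically stays away from both singularities $x, x_\xi^\star\in \overline{B_U(\xi,r)}$. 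A single Harnack chain along $\gamma$ (radius $\sim(49A^4)^{-1}r$, admissible since $\delta_U\geq(48A^4)^{-1}r$ on $\gamma$ by Lemma~\ref{l-dist} and the hypothesis on $\delta_U(y)$) yields $u_1(y)\asymp v_1(y)$ directly. In the paper's argument the hypothesis $\delta_U(y)\geq(24A^3)^{-1}r$ serves only to keep $\gamma$ inside $U$ with a quantitative margin; no sphere comparison, no maximum principle, and no detour around $y$ are needed.
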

\begin{proof}
    Fix $x\in B_{U}(\xi,r)$. Define\[u_{1}(y)=g_{D}(x,y),\quad v_{1}(y)=\frac{g_{D}(x_{\xi}^{\star},y)}{g_{D}(x_{\xi}^{\star},y_{\xi}^{\star})} g_{D}(x,y_{\xi}^{\star}),\ y\in D\setminus\{x,x_{\xi}^{\star}\}.\]The functions $u_{1}$ and $v_{1}$ are $\mathcal{E}$-harmonic in $D\setminus\{x,x_{\xi}^{\star}\}$ and $u_{1}(y_{\xi}^{\star})=v_{1}(y_{\xi}^{\star})$. Let $\gamma$ be an $A$-uniform curve from $y$ to $y_{\xi}^{\star}$ with $\diam(\gamma)\leq 2A_{3}Ar$. By Lemma \ref{l-curv-b}, $\gamma\subset U\setminus B_{U}(\xi, A_{3}(A+1)^{-1}r)\subset U\setminus B_{U}(\xi, 2r)$. Also by Lemma \ref{l-dist}, $\delta_{U}(z)\geq (2A)^{-1}\min(\delta_{U}(y),\delta_{U}(y_{\xi}^{\star}))\geq(48A^{4})^{-1}r$. By Proposition \ref{p-hc-p} we can find a $\delta_{\mathrm{H}}^{-1}$-Harnack chain of balls in $D\setminus\{x,x_{\xi}^{\star}\}$ of radius $(49A^{4})^{-1}r$, connecting $y$ and $y_{\xi}^{\star}$ and length less than $\newconstant\label{c35}=C_{D}(1+98A^{5}A_{3}\delta_{\mathrm{H}}^{-1})^{\alpha}$. Therefore, \eqref{e-g-cp-bd} holds with $\oldconstant{c7}=\exp(2\oldconstant{c35}\log C_{\mathrm{H}})$ by \eqref{e-hccp}. 
\end{proof}
{We then estimate Green function when $y$ is near to the boundary.}
\begin{lemma}[See{\cite[Lemma 5.9]{BM19}}]\label{l-g-cp-l}
    Let $\xi\in\partial U$, $0<r< C_{0}^{-1}\diam(U,d)$ and $D=B_{U}(\xi,A_{4}r)$. There exists $\newconstant\label{c8}$ such that 
    \begin{equation}\label{e-g-cp-l}
    g_{D}(x,y)\geq \oldconstant{c8}^{-1}\frac{g_{D}(x_{\xi}^{\star},y)}{g_{D}(x_{\xi}^{\star},y_{\xi}^{\star})} g_{D}(x,y_{\xi}^{\star})
    \end{equation}
    for all $x\in B_{U}(\xi,r)$ and $y\in U\cap\partial B_{U}(\xi,A_{3}r)$ with $\delta_{U}(y)<(24A^{3})^{-1}r$.
\end{lemma}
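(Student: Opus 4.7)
The strategy is to apply the maximum principle for Green functions, Lemma~\ref{l-g-mp2}, to the pair $(y,y_\xi^\star)$ at radius $\rho:=A^{-1}r$, with the ``good'' auxiliary point $z_\xi^\star\in\partial B(y_\xi^\star,\rho)$ from Notation~\ref{n-4p}. Combined with the symmetry $g_D(x,y)=g_D(y,x)$, this will reduce the claim to the pointwise sphere bound
\begin{equation*}
    g_D(y,z)\geq c_0\,g_D(y_\xi^\star,z),\quad z\in\partial B(y_\xi^\star,\rho),
\end{equation*}
where $c_0$ equals $g_D(x_\xi^\star,y)/g_D(x_\xi^\star,y_\xi^\star)$ up to a constant depending only on $A$, $K$, $C_{\mathrm{H}}$, $\delta_{\mathrm{H}}$ and the doubling constants. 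The hypotheses of Lemma~\ref{l-g-mp2} are routine: since $\delta_U(y_\xi^\star)\geq(2A)^{-1}A_3r$, $\delta_U(y)<(24A^3)^{-1}r$, and $A_3\geq 2+12A^2$, the triangle inequality yields $d(y,y_\xi^\star)>2\rho$, so $y\notin\partial B(y_\xi^\star,\rho)$ and $g_D(y,\cdot)$ is $\mathcal{E}$-harmonic on $B(y_\xi^\star,2\rho)$; the definition of $A_4$ gives $B(y_\xi^\star,(1+2K)\rho)\subset D$; and $d(y_\xi^\star,B_U(\xi,r))\geq(A_3-1)r>\rho$ ensures $B_U(\xi,r)\subset D\setminus(\{y\}\cup B(y_\xi^\star,\rho))$. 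Once the sphere bound is established, Lemma~\ref{l-g-mp2} propagates it to $B_U(\xi,r)$ and symmetry produces the desired estimate for $g_D(x,y)$.

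For the sphere bound I would first compare values at $z=z_\xi^\star$ and then propagate over the whole sphere. By Lemma~\ref{l-dist}, the uniform curve $\gamma_\xi$ of Notation~\ref{n-4p} satisfies $\delta_U(w)\geq r/(4A^2)$ for all $w\in\gamma_\xi$, which strictly dominates $\delta_U(y)$. Hence Proposition~\ref{p-hc-p} furnishes a $\delta_{\mathrm{H}}^{-1}$-Harnack chain of balls of radius of order $r/(8A^2)$ centred on the sub-arc of $\gamma_\xi$ from $z_\xi^\star$ to $x_\xi^\star$ that lies entirely inside $D\setminus\{y\}$ and has length bounded in terms of $A$, $A_3$, and $\delta_{\mathrm{H}}^{-1}$ only. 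Iterating \eqref{e-hccp} along this chain applied to the $\mathcal{E}$-harmonic function $w\mapsto g_D(y,w)$ yields $g_D(y,z_\xi^\star)\asymp g_D(x_\xi^\star,y)$. A completely analogous short chain near $y_\xi^\star$, combined with Theorem~\ref{t-g-cp}(i) applied to $g_D(y_\xi^\star,\cdot)$, gives $g_D(y_\xi^\star,z_\xi^\star)\asymp g_D(x_\xi^\star,y_\xi^\star)$ with universal constants. Dividing yields the required pointwise bound at $z_\xi^\star$; propagation to the full sphere follows from Theorem~\ref{t-g-cp}(i) for the singular function $g_D(y_\xi^\star,\cdot)$ and from an $\EHI$-Harnack chain inside $B(y_\xi^\star,2\rho)$ (whose existence is guaranteed by $\mathrm{RBC}(K)$) for the regular function $g_D(y,\cdot)$.

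The main obstacle is the construction of a Harnack chain from $z_\xi^\star$ to $x_\xi^\star$ that avoids the singularity at $y$ while retaining universally bounded length. This is precisely where the uniform (as opposed to merely inner uniform or length-uniform) structure of $U$ enters: Lemma~\ref{l-dist} gives a scale-invariant lower bound $r/(4A^2)$ on $\delta_U$ along $\gamma_\xi$ that dominates $\delta_U(y)$ by a factor independent of $r$, so balls of radius of order $r/A^2$ centred on $\gamma_\xi$ automatically miss $y$; since $\diam(\gamma_\xi)\leq 2A_3Ar$ by \eqref{e-diam}, the resulting chain length depends only on $A$, $A_3$, and $\delta_{\mathrm{H}}^{-1}$. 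Once this geometric point is secured, the rest amounts to bookkeeping with the constants already fixed in Notation~\ref{n-4p}.
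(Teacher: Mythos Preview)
Your proposal is correct and follows essentially the same approach as the paper: establish the inequality $g_D(y,z)\geq c_0\,g_D(y_\xi^\star,z)$ on the sphere $\partial B(y_\xi^\star,A^{-1}r)$ via a Harnack chain along the sub-arc of $\gamma_\xi$ from $z_\xi^\star$ to $x_\xi^\star$ (which avoids both $y$ and $y_\xi^\star$), combined with Theorem~\ref{t-g-cp}(i) and a second short chain to propagate over the full sphere, then invoke Lemma~\ref{l-g-mp2}. Two minor points of phrasing: your ``short chain near $y_\xi^\star$'' for $g_D(y_\xi^\star,z_\xi^\star)\asymp g_D(x_\xi^\star,y_\xi^\star)$ is really the \emph{same} sub-arc chain (which stays at distance $\geq A^{-1}r$ from $y_\xi^\star$, hence the balls of radius $\sim r/(8A^2)$ miss it); and for the sphere propagation of $g_D(y,\cdot)$, $\mathrm{RBC}(K)$ gives a chain inside $B(y_\xi^\star,K\rho)$ rather than $B(y_\xi^\star,2\rho)$, but since $\delta_U(y_\xi^\star)\geq(2A)^{-1}A_3r$ and $A_3$ dominates $K$, this larger ball still lies in $D\setminus\{y\}$.
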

\begin{proof}
    Fix $y\in U\cap\partial B_{U}(\xi,A_{3}r)$ with $\delta_{U}(y)<(24A^{3})^{-1}r$. Define
    \[u_{2}(x)=g_{D}(x,y),\quad v_{2}(x)=\frac{g_{D}(x_{\xi}^{\star},y)}{g_{D}(x_{\xi}^{\star},y_{\xi}^{\star})} g_{D}(x,y_{\xi}^{\star}),\ x\in D\setminus\{y,y_{\xi}^{\star}\}.\]
    According to Theorem \ref{t-g-ex}, $u_{2}$ and $v_{2}$ are $\mathcal{E}$-harmonic in $D\setminus\{y,y_{\xi}^{\star}\}$ and $u_{2}(x_{\xi}^{\star})=v_{2}(x_{\xi}^{\star}).$ Let $\gamma_{\xi}$ and $z_{\xi}^{\star}$ be as in Notation \ref{n-4p}. By Lemma \ref{l-dist}, $\delta_{U}(z)\geq(2A)^{-1}\min(\delta_{U}(x_{\xi}^{\star}),\delta_{U}(y_{\xi}^{\star}))\geq(12A^{3})^{-1}r$ for all $z\in\gamma_{\xi}$. Since $\delta_{U}(y)<(24A^{3})^{-1}r$, \[d(z,y)\geq \delta_{U}(z)- \delta_{U}(y)> (24A^{3})^{-1}r,\quad\forall z\in\gamma_{\xi}.\] By Proposition \ref{p-hc-p}, there exists a $\delta_{\mathrm{H}}^{-1}$-Harnack chain of balls centred in $\gamma_{\xi}$ with radius $(25A^{3})^{-1}r$, connecting $x_{\xi}^{\star}$ and $z_{\xi}^{\star}$ and contained in $U\setminus\{y,y_{\xi}^{\star}\}$ whose length of this chain is less than $\newconstant\label{c30}:=C_{D}(1+50A^{4}A_{3}\delta_{\mathrm{H}}^{-1})^{\alpha}$. Using \eqref{e-hccp} we deduce that \[\exp(-\oldconstant{c30}\log C_{\mathrm{H}})v_{2}(z_{\xi}^{\star})\leq v_{2}(x_{\xi}^{\star})=u_{2}(x_{\xi}^{\star})\leq \exp(\oldconstant{c30}\log C_{\mathrm{H}})u_{2}(z_{\xi}^{\star}).\]
 For any $z\in \partial B(y_{\xi}^{\star},A^{-1}r)$, by triangle inequality, $\delta_{U}(z)\geq\delta_{U}(y_{\xi}^{\star})- A^{-1}r\geq ((2A)^{-1}A_{3}-A^{-1})r$. Thus there exists a $\delta_{\mathrm{H}}^{-1}$-Harnack chain of balls in $D\setminus B(y, (24A^{3})^{-1}r)$ connecting $z$ and $z_{\xi}^{\star}$ with finite length independent of $z$.  Therefore for some $\newconstant\label{c40}$ we have $u_{2}(z_{\xi}^{\star})\leq \oldconstant{c40}u_{2}(z)$. By Theorem \ref{t-g-cp}, for all $z\in \partial B(y_{\xi}^{\star},A^{-1}r)$ we have, $\oldconstant{c1}^{-1}v_{2}(z)\leq v_{2}(z_{\xi}^{\star})$. Consequently, for all $z\in \partial B(y_{\xi}^{\star},A^{-1}r)$,  \begin{align*}
 g_{D}(z,y)=u_{2}(z)&\geq \oldconstant{c1}^{-1}\oldconstant{c40}^{-1}\exp(-2\oldconstant{c30}\log C_{\mathrm{H}})v_{2}(z)\\ &=\left(\oldconstant{c1}^{-1}\oldconstant{c40}^{-1}\exp(-2\oldconstant{c30}\log C_{\mathrm{H}})\frac{g_{D}(x_{\xi}^{\star},y)}{g_{D}(x_{\xi}^{\star},y_{\xi}^{\star})}\right) g_{D}(z,y_{\xi}^{\star}).
 \end{align*}
 Since $B(y^{\star}_{\xi},(1+2K)A^{-1}r)\subset D$, we can apply Lemma \ref{l-g-mp2} with \[c_{0}:=\left(\oldconstant{c1}^{-1}\oldconstant{c40}^{-1}\exp(-2\oldconstant{c30}\log C_{\mathrm{H}})\frac{g_{D}(x_{\xi}^{\star},y)}{g_{D}(x_{\xi}^{\star},y_{\xi}^{\star})}\right)\] and deduce that \eqref{e-g-cp-l} holds for all $x\in D\setminus(\{y\}\cup B(y_{\xi}^{\star},A^{-1}r))\supset B_{U}(\xi,r)$ with $\oldconstant{c8}=\oldconstant{c1}^{2}\exp(2\oldconstant{c30}\log C_{\mathrm{H}})$.
\end{proof}

Define \[A_{5}=A_{3}+A_{4}.\]

The following lemma is a version of \emph{Carleson estimate} for Green function.
\begin{lemma}[See{\cite[Lemma 5.10]{BM19}}]\label{l-g-cp-anu} 
Let $\xi\in\partial U$, $0<r< C_{0}^{-1}\diam(U,d)$ and $D=B_{U}(\xi,A_{4}r)$. Then there exists $\newconstant\label{c31}$ such that \begin{equation} 
    g_{D}(x,z)\leq \oldconstant{c31}g_{D}(x,y_{\xi}^{\star})\quad\text{for all }x\in B_{U}(\xi,2r),z\in F(\xi).
\end{equation}
\end{lemma}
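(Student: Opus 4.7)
The plan is to dichotomize on $\delta_U(z)$: an \emph{interior case} handled by a Harnack chain, and a \emph{boundary case} resolved by a one-step harmonic-measure estimate combined with a bootstrapping finiteness argument. Fix $x\in B_U(\xi,2r)$ and set $h:=g_D(x,\cdot)$. Since $A_3\geq 8$, we have $d(x,F(\xi))\geq 3r$, so $h$ is continuous, $\mathcal{E}$-harmonic on a neighbourhood of $\overline{F(\xi)}$, and vanishes $\mathcal{E}$-q.e.\ on $\partial U\cap \overline{F(\xi)}$. Write $F^+(\xi):=B_U(\xi,(A_3+3)r+2\rho)\setminus B_U(\xi,(A_3-3)r-2\rho)$ for the $2\rho$-enlargement of $F(\xi)$, where $\rho=c_\star r$ is a small constant (depending on $A,K$) to be fixed.

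In the interior case $\delta_U(z)\geq c_0 r$ (constant $c_0=c_0(A,K)$ to be fixed), I join $z$ to $y_\xi^\star$ by an $A$-uniform curve $\gamma$; Lemma \ref{l-dist} gives $\delta_U\geq (2A)^{-1}\min(c_0,(2A)^{-1}A_3)\,r$ on $\gamma$, and $\diam(\gamma)\leq A\,d(z,y_\xi^\star)\leq 4AA_3 r$. Proposition \ref{p-hc-p} then supplies a $\delta_{\mathrm{H}}^{-1}$-Harnack chain of uniformly bounded length whose balls can be arranged to lie in $D\setminus\{x\}$, and \eqref{e-hccp} yields $h(z)\leq C_1 h(y_\xi^\star)$ for some $C_1=C_1(A,K,C_{\mathrm{H}},\delta_{\mathrm{H}})$.

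In the boundary case $\delta_U(z)<c_0 r$, I apply Lemma \ref{l-g-hmk} with $\Omega:=B(z,\rho)\cap U\subset D\setminus\{x\}$:
\[h(z)\leq \omega\bigl(z,\,U\cap\partial B(z,\rho),\,\Omega\bigr)\cdot\sup_{w\in U\cap\partial B(z,\rho)} h(w).\]
The set $V:=\{w\in U:\delta_U(w)<c_0 r\}$ is open and contains $z$; Lemma \ref{l-capw} gives $w_{N_K,\eta}(V)\leq 6Ac_0 r$, and the domain monotonicity \eqref{e-hm-dm} combined with Lemma \ref{l-hm-cw} yield that the harmonic measure above is $\leq \exp\bigl(2-a_1\rho/(6Ac_0 r)\bigr)$. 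Choose $c_\star$ small so that $\partial B(z,\rho)\cap U\subset F^+(\xi)\subset D\setminus\{x\}$ for all $z\in F^+(\xi)$, then $c_0$ small relative to $c_\star/A$ (via $a_1$) so that this bound equals some fixed $\theta<1$. Hence $h(z)\leq \theta\sup_{F^+(\xi)} h$ for every $z\in F^+(\xi)$ with $\delta_U(z)<c_0 r$. Setting $M:=\sup_{F^+(\xi)} h/h(y_\xi^\star)$, finite by continuity of $h$ on the compact $\overline{F^+(\xi)}$, the interior and boundary bounds together give $M\leq\max(C_1,\theta M)$, which forces $M\leq C_1$ because $\theta<1$ and $M\geq 1>0$ (since $y_\xi^\star\in F^+(\xi)$). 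Hence the lemma holds with $\oldconstant{c31}=C_1$.

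The main obstacle is the compatible choice of $c_\star, c_0, \theta$: $c_\star$ is chosen first to control both the enlargement $F^+(\xi)$ and the location of the sphere $\partial B(z,\rho)$, and $c_0$ is then chosen small enough (in terms of $c_\star, A, a_1$) that the capacitary-width bound forces the harmonic measure strictly below $1$. The self-bootstrapping $M\leq\max(\theta M,C_1)$ is the clean finishing ingredient, converting a local one-step bound into a uniform global one by exploiting the a priori finiteness of $M$.
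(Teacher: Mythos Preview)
Your approach---the Carleson-estimate dichotomy on $\delta_U(z)$ with a self-improvement step---is quite different from the paper's, which instead treats $z\in F(\xi)$ as fixed and bounds $g_D(\cdot,z)$ as a function of $x$: first a crude uniform bound $g_D(x,y)\le C\,g_D(x_\xi^\star,y_\xi^\star)$ via Theorem~\ref{t-g-cp}(ii),(iv), then Lemma~\ref{l-g-hmk} with $\Omega=B_U(\xi,2r)$, the box-argument harmonic-measure bound of Lemma~\ref{l-hmkm}, and finally Lemma~\ref{l-g-4po}. Your route is in principle workable, but as written it has two genuine gaps.

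\textbf{Boundary case.} Domain monotonicity~\eqref{e-hm-dm} does not give the bound you claim. Lemma~\ref{l-hm-cw} controls $\omega\bigl(z,\,V\cap\partial B(z,\rho),\,V\cap B(z,\rho)\bigr)$ for $V=\{\delta_U<c_0 r\}$; you need $\omega\bigl(z,\,U\cap\partial B(z,\rho),\,U\cap B(z,\rho)\bigr)$. Since $V\subset U$, the latter is \emph{larger}: every path that reaches $\partial B(z,\rho)$ while staying in $V$ also does so while staying in $U$, but not conversely (a path can first exit $V$ through $\{\delta_U=c_0 r\}\subset U$ and then reach $\partial B(z,\rho)$). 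The standard fix is to apply Lemma~\ref{l-g-hmk} with $\Omega=V\cap B(z,\rho)$ instead, and split $U\cap\partial\Omega$ into the piece on $\{\delta_U=c_0 r\}$ (where the interior bound applies) and the piece on $V\cap\partial B(z,\rho)$ (harmonic measure $\le\theta$). This yields $M\le C_1+\theta M$, hence $M\le C_1/(1-\theta)$, rather than your $M\le\max(C_1,\theta M)$.

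\textbf{Interior case.} The Harnack chain need not lie in $D$. The constant $A_4=A_3(1+2A)+(1+16K)A^{-1}$ is tuned so that an $A$-uniform curve between two points \emph{on} $\partial B(\xi,A_3 r)$ lies in $\overline{B(\xi,(1+2A)A_3 r)}\subset D$ (Lemma~\ref{l-curv-b}), with only $(1+16K)A^{-1}r$ of slack. For $z\in F^+(\xi)$ with $d(z,\xi)$ as large as $(A_3+3+2c_\star)r$, the uniform curve from $z$ to $y_\xi^\star$ can reach distance $(1+2A)A_3 r + A(3+2c_\star)r$ from $\xi$, which exceeds $A_4 r$ whenever $A^2(3+2c_\star)>1+16K$---already for $A\ge4$ and $K=2$, say. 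So $h=g_D(x,\cdot)$ need not be defined along the chain, and the bound $h(z)\le C_1 h(y_\xi^\star)$ is unjustified.
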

\begin{proof}
    We first prove that \begin{equation}\label{a-g-cp-au0}
        g_{D}(x,y)\leq \oldconstant{c31} g_{D}(x_{\xi}^{\star},y_{\xi}^{\star})\quad\text{for all }x\in B_{U}(\xi,2r),y\in F(\xi).
    \end{equation}
    Let $x\in B_{U}(\xi,2r),y\in F(\xi)$. By triangle inequality, \[d(x,y)\geq d(y,\xi)-d(x,\xi)\geq(A_{3}-5)r.\]  Since $D\subset B(x_{\xi}^{\star},(1+A_{4})r)\subset B(x_{\xi}^{\star},(3+2K)(1+A_{4})r)$, and both $d(x,y)$ and $d(x_{\xi}^{\star},y_{\xi}^{\star})$ are not less than $(A_{3}-5)r$, we have for all $x\in B_{U}(\xi,2r),y\in F(\xi)$, \begin{align}
        g_{D}(x,y)&\leq g_{B(x_{\xi}^{\star},(3+2K)(1+A_{4})r)}(x,y) \ \text{ (by domain monotonicity of Green function)}\\ &\leq \oldconstant{c2}g_{B(x_{\xi}^{\star},(3+2K)(1+A_{4})r)}(x_{\xi}^{\star},y_{\xi}^{\star})  \ \text{ (by Theorem \ref{t-g-cp})}\\ &\leq \oldconstant{c4}\oldconstant{c2}g_{D}(x_{\xi}^{\star},y_{\xi}^{\star})  \ \text{ (by Theorem \ref{t-g-cp} and domain monotonicity)}. \end{align} This yields \eqref{a-g-cp-au0}. By the continuity of Green function, \eqref{a-g-cp-au0} can be extended as follows:\begin{equation}\label{a-g-cp-au1}
         g_{D}(x,y)\leq \oldconstant{c4}\oldconstant{c2} g_{D}(x_{\xi}^{\star},y_{\xi}^{\star})\quad\text{for all }x\in U\cap\overline{B_{U}(\xi,2r)},y\in F(\xi).
    \end{equation}
    Let $z\in F(\xi)$. By Lemma \ref{l-g-hmk}, for $\mathcal{E}$-q.e. $x\in B_{U}(\xi,2r)$\begin{align}
        g_{D}(x,z)&\leq \omega(x,U\cap\partial B_{U}(\xi,2r), B_{U}(\xi,2r))\sup_{y\in U\cap \partial B_{U}(\xi,2r)}g_{D}(y,z)\ \text{ (by Lemma \ref{l-g-hmk})} \\ &\leq  \oldconstant{c4}\oldconstant{c2} \oldconstant{c6}  \frac{g_{B_U\left(\xi, A_2 r\right)}\left(x, \xi_r\right)}{g_{B_U\left(\xi, A_2 r\right)}\left(\xi_r^{\prime}, \xi_r\right)}g_{D}(x_{\xi}^{\star},y_{\xi}^{\star})\ \text{(by Lemma \ref{l-hmkm} and \eqref{a-g-cp-au1})}.
        \\ &\leq  \oldconstant{c4}\oldconstant{c2} \oldconstant{c6} \oldconstant{c10}^{2} g_{D}(x,y_{\xi}^{\star})\ \text{(by Lemma \ref{l-g-4po} with $\eta=\xi_{r}$ and $\eta^{\prime}=\xi_{r}^{\prime}$)}.\label{e-g-au2}
    \end{align}
By the continuity of $g_{D}(\cdot, z)$ and $g_{D}(\cdot, y_{\xi}^{\star})$, we can extend \eqref{e-g-au2} to all $x\in B_{U}(\xi, 2r)$. This gives the lemma with $\oldconstant{c31}=\oldconstant{c4}\oldconstant{c2} \oldconstant{c6} \oldconstant{c10}^{2}$.
\end{proof}
{The lemma presented below complements Lemma \ref{l-g-cp-l}, offering an alternative estimate in the opposite direction, and gives estimates of Green function when $y$ is near to the boundary.}
\begin{lemma}[See{\cite[Lemma 5.11]{BM19}}]\label{l-g-cp-u}
    Let $\xi\in\partial U$, $0<r< C_{0}^{-1}\diam(U,d)$ and $D=B_{U}(\xi,A_{4}r)$. There exists $\newconstant\label{c32}$ such that 
    \begin{equation}\label{e-g-cp-u}
    g_{D}(x,y)\leq \oldconstant{c32}\frac{g_{D}(x_{\xi}^{\star},y)}{g_{D}(x_{\xi}^{\star},y_{\xi}^{\star})} g_{D}(x,y_{\xi}^{\star})
    \end{equation}
    for all $x\in B_{U}(\xi,r)$ and $y\in U\cap\partial B_{U}(\xi,A_{3}r)$ with $\delta_{U}(y)<(24A^{3})^{-1}r$.
\end{lemma}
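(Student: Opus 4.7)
The strategy parallels the proof of Lemma \ref{l-g-cp-anu}, with the crux being a sharper estimate on the boundary supremum of $g_D(\cdot,y)$. Since $d(y,\xi)=A_3 r>2r$ and $A_3\geq 8$, we have $y\notin\overline{B_U(\xi,2r)}$, so by symmetry of $g_D$ and Lemma \ref{l-g-hmk} applied to $g_D(y,\cdot)$ with $\Omega=B_U(\xi,2r)$, for $\mathcal{E}$-q.e. $x\in B_U(\xi,r)$,
\begin{equation*}
g_D(x,y)=g_D(y,x)\leq \omega\bigl(x,U\cap\partial B_U(\xi,2r),B_U(\xi,2r)\bigr)\cdot\sup_{z\in U\cap\partial B_U(\xi,2r)}g_D(z,y).
\end{equation*}
Lemma \ref{l-hmkm} combined with Lemma \ref{l-g-4po} (applied with $\eta=\xi_r$, $\eta'=\xi_r'$) bounds the harmonic-measure factor by $\oldconstant{c6}\oldconstant{c10}^{2}\,g_D(x,y_\xi^\star)/g_D(x_\xi^\star,y_\xi^\star)$, exactly as in the proof of Lemma \ref{l-g-cp-anu}.

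The main step is to prove $\sup_{z\in U\cap\partial B_U(\xi,2r)}g_D(z,y)\leq C\,g_D(x_\xi^\star,y)$. Fix $z\in U\cap\partial B_U(\xi,2r)$ and split into two cases. When $\delta_U(z)\geq (24A^3)^{-1}r$, Lemma \ref{l-dist} produces an $A$-uniform curve $\gamma$ from $z$ to $x_\xi^\star$ with $\delta_U(\gamma)\geq c(A)r$; because $A_3=2+12A^2$ forces $\diam(\gamma)\leq 3Ar$ to be comfortably smaller than the distance $A_3 r$ of $y$ from $\xi$, the curve $\gamma$ stays in $D\setminus\{y\}$, Proposition \ref{p-hc-p} then gives a $\delta_{\mathrm{H}}^{-1}$-Harnack chain in $D\setminus\{y\}$ of length depending only on $A$ and $K$, and \eqref{e-hccp} applied to the $\mathcal{E}$-harmonic function $g_D(\cdot,y)$ yields the claim. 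When $\delta_U(z)<(24A^3)^{-1}r$, I pick $\zeta_z\in\partial U$ with $d(\zeta_z,z)=\delta_U(z)$ and run a local box argument: Lemma \ref{l-g-hmk} bounds $g_D(z,y)$ by $\omega(z,U\cap\partial B_U(\zeta_z,2s),B_U(\zeta_z,2s))\cdot\sup_{w\in U\cap\partial B_U(\zeta_z,2s)}g_D(w,y)$ for a scale $s$ with $\delta_U(z)\leq s\ll r$; Lemma \ref{l-pick2} furnishes a point $w_\star\in U\cap\partial B_U(\zeta_z,2s)$ with $\delta_U(w_\star)\geq A^{-1}s$, which by Lemma \ref{l-hc-log} can be linked to $x_\xi^\star$ by a $\delta_{\mathrm{H}}^{-1}$-Harnack chain in $D\setminus\{y\}$ of length at most $C+C\log(1+r/\delta_U(z))$, so that \eqref{e-hccp} gives an amplification polynomial in $r/\delta_U(z)$.

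The main obstacle is the near-boundary case: one must quantitatively absorb this polynomial Harnack amplification into the decay of the local harmonic-measure factor. The required decay is supplied by Lemma \ref{l-hm-cw} combined with the capacitary-width estimate of Lemma \ref{l-capw}, which is exponential in $r/\delta_U(z)$ and therefore comfortably dominates the polynomial-in-$r/\delta_U(z)$ Harnack factor; this is exactly the balance already exploited in the box-iteration proof of Lemma \ref{l-hmkm}. Once this uniform bound is in place, combining it with the box-argument inequality and the harmonic-measure estimate produces $g_D(x,y)\leq \oldconstant{c32}\,(g_D(x_\xi^\star,y)/g_D(x_\xi^\star,y_\xi^\star))\,g_D(x,y_\xi^\star)$ with $\oldconstant{c32}$ depending only on $A$, $K$, $C_{\mathrm{H}}$, $\delta_{\mathrm{H}}$ and the doubling constants.
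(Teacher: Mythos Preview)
Your overall strategy---apply the box argument at $\xi$ to split $g_D(x,y)$ into a harmonic-measure factor and a supremum over $z\in U\cap\partial B_U(\xi,2r)$, then prove the Carleson-type bound $\sup_z g_D(z,y)\le C\,g_D(x_\xi^\star,y)$---is \emph{different} from the paper's and has a gap in the near-boundary case.

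The paper does not attempt to control $\sup_{z\in U\cap\partial B_U(\xi,2r)}g_D(z,y)$. Instead it fixes $x$ and runs the box argument at a boundary point $\zeta\in\partial U$ close to $y$: since $B_U(\zeta,2r)\subset F(\xi)$, every point $z'\in U\cap\partial B_U(\zeta,2r)$ lies in the annulus where Lemma~\ref{l-g-cp-anu} already gives $g_D(x,z')\le \oldconstant{c31}\,g_D(x,y_\xi^\star)$; the harmonic-measure factor at $\zeta$ is handled by Lemma~\ref{l-hmkm} and converted via Lemma~\ref{l-g-4po}. This reuses the Carleson estimate already established in Lemma~\ref{l-g-cp-anu} rather than proving a new one.

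Your near-boundary case does not close as written. After applying Lemma~\ref{l-g-hmk} at scale $s$ around $\zeta_z$, you face $\sup_{w\in U\cap\partial B_U(\zeta_z,2s)}g_D(w,y)$, and points $w$ in that supremum may again satisfy $\delta_U(w)\ll r$; picking one good $w_\star$ via Lemma~\ref{l-pick2} controls $g_D(w_\star,y)$ but not the supremum. Your appeal to ``exponential decay in $r/\delta_U(z)$ from Lemma~\ref{l-hm-cw}'' is not what that lemma provides: at a \emph{single} scale $s$ it yields only a fixed constant less than one, not $\exp(-c\,r/\delta_U(z))$. To get the balance you describe one must set up a genuine iteration over scales (as in the proof of Lemma~\ref{l-hmkm}), which you have not done. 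Finally, Lemma~\ref{l-hc-log} as stated produces chains avoiding a point $\eta$ at distance $A_1 r$ from $\xi$, not chains in $D\setminus\{y\}$ between $w_\star$ and $x_\xi^\star$; the geometry is different and needs a separate argument. In short, the Carleson estimate you need is plausible but requires essentially reproving a variant of Lemma~\ref{l-hmkm}; the paper sidesteps this entirely by placing the box argument near $y$.
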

\begin{proof}
   Fix $y$ and let $\zeta\in\partial U$ be a point such that $d(y,\zeta)<(24A^{3})^{-1}r$, and let $\zeta_{r}$ and $\zeta_{r}^{\prime}$ be the points given by Lemma \ref{l-hmkm}  corresponding to the boundary point $\zeta$. Since $B_{U}(\zeta,2r)\subset F(\xi)$ and  $g_{D}(x,\cdot)$ is $\mathcal{E}$-harmonic in $F(\xi)$, we have, for $\mathcal{E}$-q.e. $z\in B_{U}(\zeta,2r)$,   \begin{align} \label{e-g-cp-1} g_{D}(x,z)& \leq \omega(z,U\cap \partial B_{U}(\zeta,2r), B_{U}(\zeta,2r))\sup_{z^{\prime}\in U\cap \partial B_{U}(\zeta,2r)}g_{D}(x, z^{\prime})\ \text{(by Lemma \ref{l-g-hmk})} \\ &\leq \oldconstant{c6}\oldconstant{c31}\frac{g_{B_{U}(\zeta,A_{2}r)}(z,\zeta_{r})}{g_{B_{U}(\zeta,A_{2}r)}(\zeta_{r}^{\prime},\zeta_{r})} g_{D}(x, y_{\xi}^{\star})\ \text{(by Lemma \ref{l-hmkm} and Lemma \ref{l-g-cp-anu})} \\ &\leq \oldconstant{c6}\oldconstant{c31}\oldconstant{c10}^{2}\frac{g_{D}(x_{\xi}^{\star},z)}{g_{D}(x_{\xi}^{\star},y_{\xi}^{\star})} g_{D}(x, y_{\xi}^{\star}) \ \text{(by Lemma \ref{l-g-4po} with $\eta=\zeta_{r}$ and $\eta^{\prime}=\zeta_{r}^{\prime}$)}\end{align} By the continuity of $g_{D}(x,\cdot)$ and $g_{D}(x_{\xi}^{\star}, \cdot)$, we can extend \eqref{e-g-cp-1} to all $z\in B_{U}(\zeta,2r)$, in particular, for $y$.  Therefore \eqref{e-g-cp-u} holds with $\oldconstant{c32}=\oldconstant{c6}\oldconstant{c31}\oldconstant{c10}^{2}$.
\end{proof}
Combining Lemmas \ref{l-g-cp-bd}, \ref{l-g-cp-l} and \ref{l-g-cp-u}, we now have 
\begin{proposition}\label{p-2sdd}
    There exists $\newconstant\label{c33}$ such that for all $\xi\in\partial U$, $0<r< C_{0}^{-1}\diam(U,d)$, we have, by writing $D=B_{U}(\xi,A_{4}r)$,\begin{equation}\label{e-2sdd}
        \frac{g_{D}(x_{1},y_{1})}{g_{D}(x_{2},y_{1})}\leq \oldconstant{c33}\frac{g_{D}(x_{1},y_{2})}{g_{D}(x_{2},y_{2})}\quad\text{for all }x_{j}\in B_{U}(\xi,r),y_{j}\in U\cap\partial B_{U}(\xi,A_{3}r), j=1,2.
    \end{equation}
\end{proposition}

By Proposition \ref{p-2sdd} and representation formula in Proposition \ref{p-blyg}, we can adopt the proof of \cite[Theorem 5.2]{Lie15} to get Theorem \ref{t-bhp}.
\begin{proof}[Proof of Theorem \ref{t-bhp}]
By Proposition \ref{p-2sdd}, \[\frac{g_{D}(x_{1},y)}{g_{D}(x_{2},y)}\leq \oldconstant{c33}\frac{g_{D}(x_{1},y^{\prime})}{g_{D}(x_{2},y^{\prime})}\text{ for all }y,y^{\prime}\in U\cap\partial B_{U}(\xi,A_{3}r).\] According to the representation formula in Proposition \ref{p-blyg}, there exists a Radon measure $\nu_{u}$ and an $m$-version of $u$, denoted by $\widetilde{u}$, such that for $x_{1}$, $x_{2}\in B_{U}(\xi,r)$,\begin{align}\label{e-lp1}
    \widetilde{u}(x_{1})&=\int_{U\cap \partial B_{U}(\xi,A_{3}r)}g_{D}(x_{1},y)\dif \nu_{u}(y)\\
    &\leq \oldconstant{c33}\frac{g_{D}(x_{1},y^{\prime})}{g_{D}(x_{2},y^{\prime})} \int_{U\cap \partial B_{U}(\xi,A_{3}r)}g_{D}(x_{2},y)\dif \nu_{u}(y)=\oldconstant{c33}\frac{g_{D}(x_{1},y^{\prime})}{g_{D}(x_{2},y^{\prime})}\widetilde{u}(x_{2}).
\end{align}
Interchange $x_{1}$ and $x_{2}$, \begin{equation}\label{e-lp2}
    \widetilde{u}(x_{2})\leq \oldconstant{c33}\frac{g_{D}(x_{2},y^{\prime})}{g_{D}(x_{1},y^{\prime})}\widetilde{u}(x_{1}).
\end{equation}
Replace $u$ by $v$ in \eqref{e-lp2}, \begin{equation}\label{e-lp3}
    \widetilde{v}(x_{2})\leq \oldconstant{c33}\frac{g_{D}(x_{2},y^{\prime})}{g_{D}(x_{1},y^{\prime})}\widetilde{v}(x_{1}).
\end{equation}where $\widetilde{v}$ is an $m$-version of $v$ given by Proposition \ref{p-blyg}. Combining \eqref{e-lp1} and \eqref{e-lp3} we have \[\frac{\widetilde{u}(x_{1})}{\widetilde{u}(x_{2})}\leq \oldconstant{c33}\frac{g_{D}(x_{1},y^{\prime})}{g_{D}(x_{2},y^{\prime})}\leq \oldconstant{c33}^{2} \frac{\widetilde{v}(x_{1})}{\widetilde{v}(x_{2})}\quad\text{for all }x_{1},x_{2}\in B_{U}(\xi,r),\] 
which implies \[\esssup_{x_{1}\in B_{U}(\xi,r)}\frac{u(x_{1})}{v(x_{1})}\leq \oldconstant{c33}^{2} \essinf_{x_{2}\in B_{U}(\xi,r)}\frac{u(x_{2})}{v(x_{2})}.\]
\end{proof}
\vspace{10pt}

\noindent \textbf{Acknowledgments.} 
I am grateful to Mathav Murugan for proposing the problem tackled in this paper, for the reference \cite{KM23a}, for many helpful discussions and invaluable feedback throughout the writing process.

\vspace{11pt}
\noindent Department of Mathematical Sciences, Tsinghua University, Beijing 100084, China\\
and \\
Department of Mathematics, University of British Columbia, Vancouver, BC V6T 1Z2, Canada.

\vspace{3pt}
\noindent \texttt{aobochen.math@hotmail.com}

\vspace{-2pt}
\noindent \texttt{cab21@mails.tsinghua.edu.cn}

\end{document}